\providecommand\@dotsep{5}
\def\listtodoname{List of Todos}
\def\listoftodos{\@starttoc{tdo}\listtodoname}
\numberwithin{equation}{section}
\newcommand{\h}{H^{s}_{\e}}
\newcommand{\R}{\mathbb{R}}
\newcommand{\2}{2^{*}_{s}}
\newcommand{\C}{\mathbb{C}}
\newcommand{\N}{\mathcal{N}}
\newcommand{\M}{\mathcal{M}}
\DeclareMathOperator{\dive}{div}
\DeclareMathOperator{\supp}{supp}
\DeclareMathOperator{\e}{\varepsilon}
\newtheorem{prop}{Proposition}[section]
\newtheorem{lem}{Lemma}[section]
\newtheorem{thm}{Theorem}[section]
\newtheorem{cor}{Corollary}[section]
\newtheorem{remark}{Remark}[section]
\keywords{Fractional magnetic operators; Kirchhoff equation; variational methods; critical exponent.}
\subjclass[2010]{35A15, 35R11, 35B33, 35S05, 58E05.}
\date{}
\begin{document}
\title[fractional Kirchhoff equations with magnetic field and critical growth]{Multiplicity and concentration of solutions for a fractional Kirchhoff equation with magnetic field and critical growth}

\author[V. Ambrosio]{Vincenzo Ambrosio}
\address{Vincenzo Ambrosio\hfill\break\indent 
Dipartimento di Ingegneria Industriale e Scienze Matematiche \hfill\break\indent
Universit\`a Politecnica delle Marche\hfill\break\indent
Via Brecce Bianche, 12\hfill\break\indent
60131 Ancona (Italy)}
\email{ambrosio@dipmat.univpm.it}

\begin{abstract}
We investigate the existence, multiplicity and concentration of nontrivial solutions for the following fractional magnetic Kirchhoff equation with critical growth:
\begin{equation*}
\left(a\varepsilon^{2s}+b\varepsilon^{4s-3} [u]_{A/\varepsilon}^{2}\right)(-\Delta)_{A/\varepsilon}^{s}u+V(x)u=f(|u|^{2})u+|u|^{\2-2}u \quad \mbox{ in } \mathbb{R}^{3},
\end{equation*}
where $\varepsilon$ is a small positive parameter, $a, b>0$ are fixed constants, $s\in (\frac{3}{4}, 1)$, $\2=\frac{6}{3-2s}$ is the fractional critical exponent, $(-\Delta)^{s}_{A}$ is the fractional magnetic Laplacian, $A:\mathbb{R}^{3}\rightarrow \mathbb{R}^{3}$ is a smooth magnetic potential, $V:\mathbb{R}^{3}\rightarrow \mathbb{R}$ is a positive continuous potential verifying the global condition due to Rabinowitz \cite{Rab}, and $f:\mathbb{R}\rightarrow \mathbb{R}$ is a $C^{1}$ subcritical nonlinearity. 
Due to the presence of the magnetic field and the critical growth of the nonlinearity, several difficulties arise in the study of our problem and a careful analysis will be needed.
The main results presented here are established by using minimax methods, concentration compactness principle of Lions \cite{Lions}, a fractional Kato's type inequality and the Ljusternik-Schnirelmann theory of critical points. 
\end{abstract}

\maketitle

\section{Introduction}

\noindent
In this paper, we deal with the following fractional Kirchhoff equation with critical growth
\begin{equation}\label{P}
\left(a\e^{2s}+b\e^{4s-3}[u]_{A/\varepsilon}^{2}\right)(-\Delta)_{A/\varepsilon}^{s}u+V(x)u=f(|u|^{2})u+|u|^{\2-2}u \quad \mbox{ in } \mathbb{R}^{3},
\end{equation}
where $\e>0$ is a small parameter, $a$ and $b$ are positive constants, $s\in (\frac{3}{4}, 1)$, $\2=\frac{6}{3-2s}$ is the fractional critical exponent,
$$
[u]_{A/\varepsilon}^{2}:=\iint_{\R^{6}} \frac{|u(x)-e^{\imath (x-y)\cdot \frac{A}{\e}(\frac{x+y}{2})} u(y)|^{2}}{|x-y|^{3+2s}} \, dxdy,
$$
$V\in C(\R^{3}, \R)$ and $A\in C^{0, \alpha}(\R^{3}, \R)$, with $\alpha\in (0, 1]$, are the electric and magnetic potentials respectively, and $(-\Delta)^{s}_{A}$ is the associated fractional magnetic operator which, up to a normalization constant, is defined  along smooth functions $u\in C^{\infty}_{c}(\R^{3}, \C)$ as
\begin{equation}\label{operator}
(-\Delta)^{s}_{A}u(x)
:=2 \lim_{r\rightarrow 0} \int_{\R^{3}\setminus B_{r}(x)} \frac{u(x)-e^{\imath (x-y)\cdot A(\frac{x+y}{2})} u(y)}{|x-y|^{3+2s}} \,dy,
\end{equation}
where $B_{r}(x)$ denotes the ball in $\R^{3}$ centered at $x\in \R^{3}$ and of radius $r>0$.
The operator \eqref{operator} has been recently introduced in \cite{DS} and can be considered
as the fractional counterpart of the magnetic Laplacian 
$$
-\Delta_{A} u:=\left(\frac{1}{\imath}\nabla-A\right)^{2}u= -\Delta u -\frac{2}{\imath} A(x) \cdot \nabla u + |A(x)|^{2} u -\frac{1}{\imath} u \dive(A(x)).
$$ 
In this context, the curl of $A$ represents a magnetic field acting on a charged particle; see \cite{LS, RS} for a physical background, and  \cite{AFF, Cingolani, CS, EL, H1, K} for some interesting existence and multiplicity results involving $-\Delta_{A}$.

When $s=\frac{1}{2}$, the definition in \eqref{operator} goes back to the '80's, and it is related to the definition of a quantized operator corresponding to the classical relativistic Hamiltonian symbol
$$
\sqrt{(\xi-A(x))^{2}+m^{2}}+V(x), \quad (\xi, x)\in \R^{3}\times \R^{3},
$$
which is the sum of the kinetic energy term involving magnetic vector potential $A(x)$ and the potential energy term of electric scalar potential $V(x)$. 
For the sake of completeness, we emphasized that in the literature there are three kinds of quantum relativistic Hamiltonians depending on how to quantize the kinetic energy term $\sqrt{(\xi-A(x))^{2}+m^{2}}$.
As explained in \cite{I10}, these three non-local operators are in general different from each other but coincide when the vector potential $A$ is assumed to be linear, which is a very relevant physical situation in $\R^{3}$.

In absence of the magnetic field (i.e. $A=0$), the operator \eqref{operator} is consistent with the following definition of fractional Laplacian $(-\Delta)^{s}$ via singular integrals (see \cite{DPV}), namely
$$
(-\Delta)^{s}u(x)
:=2 \lim_{r\rightarrow 0} \int_{\R^{3}\setminus B_{r}(x)} \frac{u(x)-u(y)}{|x-y|^{3+2s}} \,dy.
$$ 
More in general, nonlocal operators can be viewed as the infinitesimal generators of L\'evy stable diffusion processes, and arise in a quite natural way in the description of several physical phenomena such as phase transitions, continuum mechanics, population dynamics, material science, flame propagation, plasma physics and so on. Indeed, the literature on nonlocal fractional operators and on their applications is impressive, so we refer the interested reader to \cite{DPV, MBRS} for a more detailed and exhaustive description on this subject.

When $\e=a=1$, $b=0$ and $\R^{3}$ is replaced by $\R^{N}$, then \eqref{P} boils down to the following fractional magnetic Schr\"odinger equation
\begin{equation}\label{FMSE}
(-\Delta)^{s}_{A}u+V(x)u=g(x, |u|^{2})u \quad \mbox{ in } \R^{N},
\end{equation}
for which some recent existence and multiplicity results have been established; see for instance \cite{Amjm, Acpde, AD, DS, ZSZ}, and \cite{FPV, FVe} for problems in bounded domains. 
We note that, when $A\equiv 0$, \eqref{FMSE} becomes the well-known fractional Schr\"odinger equation introduced by Laskin \cite{Laskin}
\begin{equation}\label{FSE}
(-\Delta)^{s}u+V(x)u=g(x, u^2)u \quad \mbox{ in } \R^{N},
\end{equation}
as a result of expanding the Feynman path integral, from the Brownian like to the L\'evy like quantum mechanical paths. 
Equation \eqref{FSE} has received a tremendous popularity in the last decade, and in  the literature appear several works concerning the existence, multiplicity and qualitative properties of solutions obtained via suitable variational methods, and under different assumptions on the potential $V$ and the nonlinearity $g$; see \cite{AM, A1, DDPW, DPMV, FQT, H2, Is} and the references therein. 

On the other hand, if $A\neq 0$, Mingqi et al. \cite{MPSZ} dealt with the following class of fractional Schr\"odinger-Kirchhoff equations
$$
M([u]^{2}_{A})(-\Delta)^{s}_{A/\e}+V(x)u=g(x, |u|^{2})u \quad \mbox{ in } \R^{N},
$$
where $M$ is a continuous Kirchhoff function, $V$ is a positive continuous potential such that there exists $h>0$ such that $|\{x\in B_{h}(y): V(x)\leq c\}|\rightarrow 0$ as $|y|\rightarrow \infty$ for all $c>0$, and $g$ is a continuous nonlinearity with subcritical growth. In the super-and sub-linear cases, the authors proved the existence of least energy solutions for the above problem by the mountain pass theorem \cite{AR}, combined with the Nehari method, and by the direct methods respectively. Moreover, the existence of infinitely many solutions is also investigated by the symmetric mountain pass theorem.
In \cite{LRZ}, Liang et al. used a fractional version of the concentration compactness principle and critical point theory to establish a multiplicity result for the following fractional Schr\"odinger-Kirchhoff equation with electromagnetic fields and critical nonlinearity
$$
\e^{2s}M([u]^{2}_{A/\e})(-\Delta)^{s}_{A/\e}+V(x)u=|u|^{\2-2}u+g(x, |u|^{2})u \quad \mbox{ in } \R^{N},
$$
where $M$ is a positive continuous nondecreasing function, $g$ is a subcritical nonlinearity, and $V$ is such that $\min_{\R^{3}} V=0$ and there exists $\tau>0$ such that the set $\{x\in \R^{N}: V(x)<\tau\}$ has finite Lebesgue measure.

We recall that, when $A=0$, $\e=1$ and $\R^{3}$ is replaced by $\R^{N}$, equation \eqref{P} reduces to the following fractional Kirchhoff equation
\begin{align}\label{FKE}
M\left(\int_{\R^{N}}|(-\Delta)^{\frac{s}{2}}u|^{2}dx\right)(-\Delta)^{s}u+V(x)u=g(x, u) \quad \mbox{ in } \R^{N},
\end{align}
where $M$ is a continuous Kirchhoff function whose model case is given by $M(t)=a+bt$. 
It is worth pointing out that Fiscella and Valdinoci \cite{FV} proposed for the first time a stationary fractional Kirchhoff variational model with critical growth in smooth bounded domains, which takes into account the nonlocal aspect of the tension arising from nonlocal measurements of the fractional length of the string; see the Appendix in \cite{FV} for more details.
Subsequently, many authors established several existence and multiplicity results for fractional Kirchhoff equations; see \cite{AI1, FP, LSZ, MBRS} and the references therein. For instance, Fiscella and Pucci \cite{FP} dealt with the existence and the asymptotic behavior of nontrivial solutions for a class of $p$-fractional Kirchhoff type equations in $\R^{N}$ involving critical nonlinearities.
Liu et al. \cite{LSZ} used the monotonicity trick and the profile decomposition, to obtain the existence of ground states to a fractional Kirchhoff equation with critical nonlinearity in low dimension. 
The author and Isernia in \cite{AI2} studied the existence and multiplicity via penalization method and the Ljusternik-Schnirelmann category theory for a fractional Kirchhoff equation with subcritical nonlinearities.

On the other hand, when $M(t)=a+bt$ and $s=1$, then \eqref{FKE} becomes a classical Kirchhoff equation of the type
\begin{equation}\label{SKE}
-\left(a+b\int_{\R^{3}} |\nabla u|^{2}dx \right)\Delta u+V(x)u=g(x,u) \quad \mbox{ in } \R^{3}.
\end{equation}
Problem \eqref{SKE} is related to the stationary analogue of the Kirchhoff equation
\begin{equation}\label{KE}
\rho u_{tt} - \left( \frac{p_{0}}{h}+ \frac{E}{2L}\int_{0}^{L} |u_{x}|^{2} dx \right) u_{xx} =0,
\end{equation}
which was introduced by Kirchhoff \cite{Kir} in $1883$ as an extension of the classical D'Alembert's wave equations for free vibration of elastic strings. 
The Kirchhoff's model takes into account the changes in length of the string produced by transverse vibrations. In \eqref{KE}, $u=u(x, t)$ denotes the transverse string displacement at the space coordinate $x$ and time $t$, $L$ is the length of the string, $h$ is the area of the cross section, $E$ is Young's modulus of the material, $\rho$ is the mass density, and $p_{0}$ is the initial tension. We refer to \cite{B, P} for the early classical studies dedicated to \eqref{KE}. 
Anyway, only after the pioneer work of Lions \cite{LionsK}, where a functional analysis approach was proposed to attack \eqref{KE}, problem \eqref{SKE} began to call attention of several mathematicians; see \cite{ACM, FJ, HLP, HZ, WTXZ} and the references therein.
In particular, He and Zou \cite{HZ} obtained the first existence and multiplicity result of concentrating solutions 
for small $\e>0$ of the following perturbed Kirchhoff equation
\begin{align}\label{CKE}
-\left(a\varepsilon^{2}+b\varepsilon \int_{\R^{3}}|\nabla u|^{2} dx\right)\Delta u+V(x)u=g(u) \quad \mbox{ in } \mathbb{R}^{3},
\end{align} 
assuming that $V: \R^{3}\rightarrow \R$ is a continuous potential satisfying the assumption introduced by Rabinowitz \cite{Rab}:
\begin{equation*}
V_{\infty}=\liminf_{|x|\rightarrow \infty} V(x)>\inf_{x\in \R^{N}} V(x)=V_{0}, \mbox{ where } V_{\infty}\leq \infty, \tag{V}
\end{equation*}
and $g$ is a subcritical nonlinearity. Subsequently, Wang et al. \cite{WTXZ} investigated the multiplicity and concentration phenomenon for \eqref{CKE} when $g(u)=\lambda f(u)+|u|^{4}u$, $f$ is a continuous subcritical nonlinearity and $\lambda$ is large. Figueiredo and Santos Junior \cite{FJ} proved a multiplicity result for a subcritical Kirchhoff equation via the generalized Nehari manifold method, when the potential $V$ has a local minimum. He et al. \cite{HLP} considered the existence and multiplicity of solutions to \eqref{CKE} when $g(u)=f(u)+u^{5}$, $f\in C^{1}$ is a subcritical nonlinearity which does not verifies the Ambrosetti-Rabinowitz condition \cite{AR}.

Motivated by the above works, in the present paper we focus our attention on the existence, multiplicity and concentration behavior of solutions to \eqref{P} when $\e>0$ is sufficiently small, under assumption $(V)$  on the potential $V$ and
requiring that $f:\R\rightarrow \R$ is a $C^{1}$-function satisfying the following conditions:
\begin{compactenum}[$(f_1)$]
\item $f(t)=0$ for $t\leq 0$ and $\displaystyle{\lim_{t\rightarrow 0^{+}} \frac{f(t)}{t}=0}$;
\item there exist $q, \sigma\in (4, 2^{*}_{s})$ and $\lambda>0$, with $2^{*}_{s}= \frac{6}{3-2s}$, such that 
$$
f(t)\geq \lambda t^{\frac{\sigma-2}{2}}, \mbox{ for all } t>0, \mbox{ and } \quad \lim_{t\rightarrow \infty} \frac{f(t)}{t^{\frac{q-2}{2}}}=0;
$$
\item there exists $\theta\in (4, q)$ such that $0<\frac{\theta}{2} F(t)\leq t f(t)$ for all $t>0$, where $F(t)=\int_{0}^{t} f(\tau)d\tau$;
\item $\frac{f(t)}{t}$ is increasing for all $t>0$.
\end{compactenum} 
We note that the restriction $s>3/4$ allows us to deduce that $4<\2$, so that $(f_2)$ and $(f_3)$ make sense.   
These assumptions will be fundamental to derive  the  geometric  structure  of  the energy functional associated with \eqref{P},  the  boundedness and convergence  of  the  Palais-Smale sequences.\\
Let us now define the sets $M=\{x\in \R^{3}: V(x)=V_{0}\}$ and $M_{\delta}=\{x\in \R^{3}: dist(x, M)\leq \delta\}$. In order to state precisely the main results of this work, we recall that if $Y$ is a given closed set of a topological space $X$, we denote by $cat_{X}(Y)$ the Ljusternik-Schnirelmann category of $Y$ in $X$, that is the least number of closed and contractible sets in $X$ which cover $Y$; see \cite{W} for more details.

Our main result can be summarized as follows:
\begin{thm}\label{thm1}
Assume that $(V)$ and $(f_1)$-$(f_4)$ hold. Then, for any $\delta>0$
there exists $\e_{\delta}>0$ such that, for any $\e\in (0, \e_{\delta})$, problem \eqref{P} has at least $cat_{M_{\delta}}(M)$ nontrivial solutions. 
Moreover, if $u_{\e}$ denotes one of these solutions and $x_{\e}$ is a global maximum point of $|u_{\e}|$, then we have 
$$
\lim_{\e\rightarrow 0} V(x_{\e})=V_{0},
$$	
and there exists $C>0$ such that
$$
|u_{\e}(x)|\leq \frac{C\e^{3+2s}}{C\e^{3+2s}+|x-x_{\e}|^{3+2s}} \quad \forall x\in \R^{3}.
$$
\end{thm}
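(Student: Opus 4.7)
The plan is to follow the now-classical Benci--Cerami scheme adapted to the magnetic fractional Kirchhoff setting. First I would introduce the associated energy functional $J_\e$ on the magnetic fractional Sobolev space $H^{s}_{\e}$ and, because $V$ only satisfies the global Rabinowitz condition $(V)$, pass to a penalized/modified nonlinearity outside a bounded region (in the spirit of del Pino--Felmer) so that Palais--Smale sequences can be controlled. Then I would restrict to the Nehari manifold $\N_\e$; the assumptions $(f_1)$--$(f_4)$ together with $\theta>4$ and $\sigma>4$ (which relies on $s>3/4$) ensure that $\N_\e$ is a $C^1$-manifold and that the minimum $c_\e$ of $J_\e$ on $\N_\e$ is well defined.

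Next I would study the limit (autonomous) problem obtained by freezing $V\equiv V_0$ and switching off the magnetic field, and establish the existence of a ground state and the value $c_{V_0}$ of the associated mountain pass level. Using the critical exponent and Lions' concentration--compactness principle, I would prove a local compactness result: any $(PS)_c$ sequence with $c$ below a threshold depending on the best fractional Sobolev constant $S_*$ and on $c_{V_0}$ admits a convergent subsequence (after possibly translating). The Kirchhoff coefficient $a\e^{2s}+b\e^{4s-3}[u]^2$ makes this estimate more delicate than in the Schr\"odinger case: one must show that along a minimizing sequence the Kirchhoff term does not concentrate, using the fact that $\theta>4$.

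The multiplicity part is then obtained by the photography method. For $\e$ small I would build two continuous maps: $\Phi_\e:M\to\N_\e$ sending $y\in M$ to a suitably cut-off, magnetically twisted and rescaled copy of a ground state of the limit problem centered near $y/\e$; and the barycenter-type map $\beta_\e:\N_\e^{c_{V_0}+h(\e)}\to M_\delta$ built from $|u|^2$. The key technical lemma, proved by contradiction and using the local compactness result, is that $J_\e(\Phi_\e(y))\to c_{V_0}$ uniformly in $y\in M$ and that $\beta_\e\circ\Phi_\e$ is homotopic to the inclusion $M\hookrightarrow M_\delta$; standard Ljusternik--Schnirelmann category arguments then yield at least $\mathrm{cat}_{M_\delta}(M)$ critical points of $J_\e$ constrained to $\N_\e$, which correspond to nontrivial solutions of \eqref{P}. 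A comparison with the penalized problem's threshold shows that these solutions in fact solve the original equation for $\e$ small.

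Finally, for the concentration statement, given a solution $u_\e$ with maximum point $x_\e$, I would prove $\lim_{\e\to 0} V(x_\e)=V_0$ by translating $v_\e(x)=u_\e(\e x+x_\e)$, showing that $v_\e$ converges (up to a phase) to a ground state of the autonomous problem at level $V_0$, and ruling out via the limit problem that $V(x_\e)$ stays away from $V_0$. For the polynomial decay I would use a fractional Kato inequality applied to $|u_\e|$ to reduce to an inequality for the real fractional Laplacian, then a Moser iteration to get a uniform $L^\infty$-bound, and finally compare $|u_\e|$ with an explicit barrier of the form $C\e^{3+2s}/(C\e^{3+2s}+|x-x_\e|^{3+2s})$ using comparison principles for $(-\Delta)^s+V$. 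The main obstacle I anticipate is the compactness analysis: disentangling the critical concentration, the Kirchhoff nonlocality, and the diamagnetic loss of phase simultaneously, in particular making the threshold $c_{V_0}+h(\e)<\frac{s}{3}\bigl(aS_*\bigr)^{3/(2s)}+\text{Kirchhoff corrections}$ effective in the compactness lemma.
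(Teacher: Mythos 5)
Your proposal is correct in its overall architecture and matches the paper's scheme in most respects (Nehari manifold, autonomous limit problem, Lions concentration--compactness below a critical threshold involving $S_{*}$ and the Kirchhoff correction, the maps $\Phi_{\e}$ and $\beta_{\e}$ with the homotopy to the inclusion $M\hookrightarrow M_{\delta}$, Ljusternik--Schnirelmann category, and Kato inequality plus Moser iteration plus a polynomial barrier for the decay). The one genuine divergence is your treatment of the potential: you propose a del Pino--Felmer penalization of the nonlinearity outside a bounded region, whereas the paper never modifies the equation. Under the global Rabinowitz condition $(V)$ one can compare the mountain pass level $c_{\e}$ directly with the ground energy $m_{V_{\infty}}$ of the autonomous problem at infinity, and the paper proves that $J_{\e}$ satisfies $(PS)_{c}$ for all $c<m_{V_{\infty}}$ (resp. $c<c_{*}$ when $V_{\infty}=\infty$) by ruling out vanishing and dichotomy for the measures $\rho_{n}$; the existence of solutions then follows once one checks $c_{\e}<m_{V_{\infty}}$ for small $\e$, via a test function $\eta_{r}we^{\imath A(0)\cdot x}$ and the H\"older continuity of $A$. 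Your penalization route is viable but costs an extra step that the direct route avoids: after producing critical points of the modified functional you must verify, via the uniform $L^{\infty}$ decay, that they solve the original equation, and you must redo the critical-level analysis for the truncated functional. Conversely, penalization would be the only option under a merely local condition on $V$, so your method is more robust but less economical here. One further caution: in the magnetic setting you should not expect $u_{\e}(\e\cdot+x_{\e})$ to converge ``up to a phase'' to a real ground state, since the magnetic seminorm is not translation invariant; the paper instead applies the diamagnetic inequality and works throughout with the modulus $|u_{\e}|(\cdot+\tilde{y}_{\e})$, which converges in $H^{s}(\R^{3},\R)$ to a ground state of the $V_{0}$-problem, and this is also the object to which the Kato-type inequality and the comparison argument are applied.
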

The proof of Theorem \ref{thm1} relies on suitable variational arguments.
We emphasize that the main difficulties in the study of \eqref{P} lie in the combination of several features such as the appearance of the magnetic field, the lack of compactness due to the unboundedness of $\R^{3}$ and the nonlinearity with critical growth. 
Therefore, in order to overcome these obstacles, a more careful and accurate analysis will be needed.
Firstly, we can prove that the energy functional $J_{\e}$ associated with \eqref{P} has a mountain pass geometry \cite{AR}.
Then, in order to get some compactness properties for $J_{\e}$, we have to circumvent several problems. The first one  is related to the presence of the magnetic Kirchhoff term $[u]^{2}_{A/\e}(-\Delta)^{s}_{A/\e}u$.  In fact, in general, we can not deduce that $[u_{n}]^{2}_{A/\e}\rightarrow [u]^{2}_{A/\e}$ if we only know that $u_{n}\rightharpoonup u$ in the fractional magnetic Sobolev space $\h$ (see Section $2$ for its definition).
Secondly, the critical exponent makes our investigation rather tricky. Indeed, due to the lack of compactness of the Sobolev embedding $\h$ in $L^{\2}(\R^{3}, \R)$, the functional $J_{\e}$ does not satisfy the $(PS)$ condition at any level $c\in \R$. More precisely, we will see that the levels of compactness depend on the behavior of the potential $V$ at infinity and on a threshold value $c_{*}$ related to the autonomous critical Kirchhoff limit problem (with $A=0$) associated with \eqref{P}. 
Thus, we will combine in an appropriate way the diamagnetic inequality established in \cite{DS} with the concentration-compactness principle of Lions \cite{Lions} to recover some compactness property for $J_{\e}$; see Lemma \ref{PSc}.
Moreover, we use the H\"older continuity of the magnetic potential $A$ and some interesting decay properties of positive solutions to the limit Kirchhoff problem associated with \eqref{P}, to get some useful estimates which will be fundamental to obtain the existence of solutions to \eqref{P} and to implement the barycenter machinery; see Sections $5$ and $6$. Finally, as in \cite{Acpde}, we are able to deduce some $L^{\infty}$ and decay estimates on the modulus of solutions of \eqref{P} applying a Moser iteration scheme \cite{Moser} and a non trivial approximation argument inspired in some sense by the Kato inequality for $-\Delta_{A}$ (see \cite{Kato}). Roughly speaking, we will show that the modulus of solutions to \eqref{P} are sub-solutions of certain fractional Kirchhoff problems (with $A=0$) and then we take advantage of the well-known polynomial decay of solutions of \eqref{FSE} (see \cite{FQT}) to achieve the desired result; see Lemma \ref{moser}. 
To the best of our knowledge, this is the first time that minimax methods jointly with the Ljusternik-Schnirelmann theory are used to get multiple solutions for \eqref{P}. Moreover, the result presented here seems to be new also in the case $s=1$.

\smallskip
The outline of this paper is as follows. In Section $2$ we give the notations and we recall some useful lemmas for the fractional magnetic Sobolev spaces. In Section $3$ we introduce the variational framework of problem \eqref{P}. The Section $4$ is devoted to the study of Palais-Smale compactness condition. In Section $5$ we give a first existence result for \eqref{P}. In the last section, we provide a multiplicity result for \eqref{P} via the Ljusternik-Schnirelmann category theory, and we study the concentration of the maximum points.

\section{Notation and preliminaries}\label{sec2}
In this section, we fix some notations and list some useful preliminary results.

For $p\in [1, \infty]$, we will use the notation $|u|_{p}$ to indicate the $L^{p}$-norm of a function $u:\R^{3}\rightarrow \R$.

Let $s\in (0, 1)$. We denote by $D^{s,2}(\R^{3}, \R)$ the completion of $C^{\infty}_{c}(\R^{3}, \R)$ with respect to the Gagliardo seminorm
$$
[u]^{2}:=[u]^{2}_{s}=\iint_{\R^{6}} \frac{|u(x)-u(y)|^{2}}{|x-y|^{3+2s}} dxdy.
$$
In particular, $D^{s,2}(\R^{3}, \R)$ can be characterized as follows:
$$
D^{s,2}(\R^{3}, \R)=\{u\in L^{\2}(\R^{3}, \R): [u]<\infty\}.
$$
Let us define the fractional Sobolev space $H^{s}(\R^{3}, \R)$ by
$$
H^{s}(\R^{3}, \R):=\{u\in L^{2}(\R^{3}, \R): [u]<\infty\}.
$$
It is well known (see \cite{DPV}) that the embedding $H^{s}(\R^{3}, \R)\subset L^{q}(\R^{3}, \R)$ is continuous for all $q\in [2, \2)$ and locally compact for all $q\in [1, \2)$. We also denote by $S_{*}=S_{*}(s)$ the best constant of the fractional Sobolev embedding $D^{s,2}(\R^{3}, \R)$ into $L^{\2}(\R^{3}, \R)$ (see \cite{CT}),
that is
$$
S_{*}:=\inf_{u\in D^{s,2}(\R^{3}, \R)\setminus\{0\}} \frac{[u]^{2}}{|u|^{2}_{\2}}.
$$

Let $L^{2}(\R^{3}, \C)$ be the space of complex-valued functions such that $|u|_{2}^{2}=\int_{\R^{3}}|u|^{2}\, dx<\infty$ endowed with the inner product 
$\langle u, v\rangle_{L^{2}}:=\Re\int_{\R^{3}} u\bar{v}\, dx$, where the bar denotes complex conjugation.

Let us define
$$
D^{s,2}_{A}(\R^3,\C):=\left\{u\in L^{2_s^*}(\R^3,\C) : [u]^{2}_{A}<\infty\right\},
$$
where
$$
[u]^{2}_{A}:=\iint_{\R^{6}} \frac{|u(x)-e^{\imath (x-y)\cdot A(\frac{x+y}{2})} u(y)|^{2}}{|x-y|^{3+2s}} \, dxdy.
$$
Then we introduce the Hilbert space
$$
H^{s}_{\e}:=\left\{u\in D_{A_{\e}}^{s,2}(\R^3,\C): \int_{\R^{3}} V_{\e}(x) |u|^{2}\, dx <\infty\right\}
$$ 
endowed with the norm
$$
\|u\|_{\e}:=\left([u]^{2}_{A_{\e}}+\int_{\R^{3}} V_{\e}(x)|u|^{2}\, dx \right)^{1/2}.
$$
It is clear that $\h$ is a Hilbert space with scalar product
\begin{align*}
&\langle u , v \rangle_{\e}:= a\Re\iint_{\R^{6}} \frac{(u(x)-e^{\imath(x-y)\cdot A_{\e}(\frac{x+y}{2})} u(y))\overline{(v(x)-e^{\imath(x-y)\cdot A_{\e}(\frac{x+y}{2})}v(y))}}{|x-y|^{3+2s}} dx dy+\Re \int_{\R^{3}} V_{\e}(x) u \bar{v} dx
\end{align*}
for all $u, v\in \h$.
In what follows, we recall some useful results related to $\h$; see \cite{AD, DS}.
\begin{lem}\cite{AD, DS}
The space $\h$ is complete and $C_c^\infty(\R^3,\C)$ is dense in $\h$. 
\end{lem}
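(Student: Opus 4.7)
The plan is to establish the two claims separately: completeness via a standard Cauchy plus Fatou argument, and density by a two-step truncate-then-mollify procedure.

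For completeness, let $(u_n)\subset \h$ be Cauchy with respect to $\|\cdot\|_\e$. Since the assumption $(V)$ together with positivity of $V$ forces $V_0 = \inf_{\R^3} V > 0$, the potential part of the norm dominates the standard $L^2$ norm: $V_0 |u_n-u_m|_2^2 \leq \int_{\R^3} V_\e(x)|u_n-u_m|^2 dx$. Hence $(u_n)$ is Cauchy in $L^2(\R^3,\C)$ with limit $u$, and along a subsequence $u_n \to u$ almost everywhere in $\R^3$. Applying Fatou's lemma simultaneously to the magnetic Gagliardo integrand and to the weighted $L^2$ density (for fixed $n$, as $m\to\infty$) yields
$$
\|u-u_n\|_\e^2 \leq \liminf_{m\to\infty}\|u_m-u_n\|_\e^2,
$$
and the right-hand side vanishes as $n\to\infty$ by the Cauchy property. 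Thus $u-u_n\in\h$ for each $n$, so $u\in\h$ and $u_n\to u$ in $\h$.

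For density, I first reduce to compactly supported elements. Fix $u\in\h$, pick $\eta\in C^\infty_c(\R^3,\R)$ with $0\leq \eta\leq 1$, $\eta\equiv 1$ on $B_1$ and $\supp \eta\subset B_2$, and set $\eta_R(x):=\eta(x/R)$, $u_R:=\eta_R u$. The weighted $L^2$ contribution of $\|u-u_R\|_\e$ tends to $0$ by dominated convergence. For the magnetic Gagliardo seminorm of $u-u_R=(1-\eta_R)u$, the algebraic identity
\begin{align*}
& (1-\eta_R)(x)u(x) - e^{\imath(x-y)\cdot A_\e(\frac{x+y}{2})}(1-\eta_R)(y)u(y) \\
&\quad = (1-\eta_R)(x)\bigl[u(x) - e^{\imath(x-y)\cdot A_\e(\frac{x+y}{2})}u(y)\bigr] + e^{\imath(x-y)\cdot A_\e(\frac{x+y}{2})}u(y)[\eta_R(y)-\eta_R(x)]
\end{align*}
splits the integrand into two pieces. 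The first is bounded pointwise by the integrand of $[u]_{A_\e}^2$ times $(1-\eta_R(x))^2\to 0$, hence vanishes by dominated convergence. The second is controlled by
$$
C\iint_{\R^6}\frac{|u(y)|^2 |\eta_R(x)-\eta_R(y)|^2}{|x-y|^{3+2s}}dx dy,
$$
which tends to $0$ via the classical splitting over $\{|x-y|\leq 1\}$ (using the Lipschitz bound $|\eta_R(x)-\eta_R(y)|\leq CR^{-1}|x-y|$) and $\{|x-y|>1\}$ (using $\|\eta_R\|_\infty\leq 1$ and $u\in L^2$), combined with dominated convergence in $R$. As a second step, for any compactly supported $u\in\h$, I mollify with a standard kernel $\rho_\delta$: the convolution $u^\delta:=\rho_\delta * u$ lies in $C^\infty_c(\R^3,\C)$, converges to $u$ in $L^2$ hence in the weighted $L^2$ sense, and converges in the magnetic Gagliardo seminorm by absorbing the phase factor $e^{\imath(x-y)\cdot A_\e(\frac{x+y}{2})}$ via the H\"older regularity of $A$ (Taylor expanding the phase in $|x-y|$) to reduce to the classical scalar mollification statement in $H^s(\R^3,\C)$.

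The main obstacle is the commutator estimate $\bigl[(1-\eta_R)u\bigr]_{A_\e}\to 0$: the magnetic phase prevents a straightforward scalar Leibniz rule, but the additive splitting above isolates the phase from the cut-off so that both resulting integrals are controlled via dominated convergence tailored to the kernel $|x-y|^{-3-2s}$. Once this is in place, the mollification step is essentially standard, since on compactly supported functions the non-magnetic Gagliardo seminorm and its magnetic counterpart differ only by a lower-order contribution arising from the H\"older-smooth phase.
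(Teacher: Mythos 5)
The paper does not prove this lemma itself---it merely cites \cite{AD, DS}---and your argument is essentially the standard one from those references: completeness via the Cauchy property in the weighted $L^{2}$ norm plus Fatou's lemma along an a.e.\ convergent subsequence, and density via spatial truncation (with the additive splitting that isolates the cut-off commutator, controlled exactly as in Lemma \ref{cutoff}) followed by mollification, where the phase is absorbed using $|1-e^{\imath t}|\leq |t|$ and the local boundedness of $A$. The proof is correct; the only point worth making explicit is that the limit $u$ indeed lies in $L^{2^{*}_{s}}(\R^{3},\C)$, which follows from the diamagnetic inequality (Lemma \ref{DI}) combined with $|u|\in L^{2}$ and $[|u|]\leq [u]_{A_{\e}}<\infty$.
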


\begin{lem}(diamagnetic inequality)\label{DI}\cite{DS}
If $u\in H^{s}_{A}(\R^{3}, \C)$ then $|u|\in H^{s}(\R^{3}, \R)$ and we have
$$
[|u|]\leq [u]_{A}.
$$
\end{lem}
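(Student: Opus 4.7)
The plan is to reduce the claim to a pointwise diamagnetic inequality and then integrate. Concretely, writing $\phi(x,y) := (x-y)\cdot A\bigl(\frac{x+y}{2}\bigr)$, I would first prove that for a.e.\ $x,y \in \R^{3}$,
\begin{equation*}
\bigl|\, |u(x)| - |u(y)| \,\bigr| \;\leq\; \bigl|\, u(x) - e^{\imath \phi(x,y)} u(y) \,\bigr|.
\end{equation*}
This is the heart of the matter and it is really just a one-line algebraic observation: since $|e^{\imath \phi}|=1$, expanding the square on the right gives
\begin{equation*}
|u(x) - e^{\imath\phi} u(y)|^{2} = |u(x)|^{2} + |u(y)|^{2} - 2\,\Re\!\bigl(e^{-\imath \phi} u(x)\overline{u(y)}\bigr),
\end{equation*}
and the elementary bound $\Re(e^{-\imath \phi} u(x)\overline{u(y)}) \leq |u(x)||u(y)|$ yields
\begin{equation*}
|u(x) - e^{\imath \phi} u(y)|^{2} \;\geq\; \bigl(|u(x)| - |u(y)|\bigr)^{2},
\end{equation*}
from which the pointwise inequality follows by taking square roots.

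Once this pointwise estimate is in hand, I would divide both sides by $|x-y|^{3+2s}$, integrate over $\R^{6}$, and recognize the two sides as $[|u|]^{2}$ and $[u]^{2}_{A}$ respectively. This gives directly $[|u|]\leq [u]_{A}$. Finally, the fact that $|u|\in H^{s}(\R^{3},\R)$ follows because $|\,|u|\,| = |u|$ in $L^{2}(\R^{3},\R)$ (so $|u|$ inherits $L^{2}$-integrability from $u$), and the Gagliardo seminorm $[|u|]$ is finite by the inequality just proved together with $[u]_{A}<\infty$.

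The only genuinely subtle point is the pointwise inequality itself, i.e., making sure the phase $e^{\imath \phi(x,y)}$ can be absorbed by taking the real part; everything afterwards is a direct application of Fubini--Tonelli on the double integral. In particular, no regularity on $A$ beyond measurability is needed at this stage, which is consistent with the hypotheses of the lemma.
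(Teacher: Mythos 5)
Your argument is correct and is essentially the standard proof from the cited reference \cite{DS} (the paper itself states the lemma without proof): the pointwise bound $\bigl||u(x)|-|u(y)|\bigr|\leq |u(x)-e^{\imath\phi(x,y)}u(y)|$ is exactly the reverse triangle inequality for the pair $u(x)$ and $e^{\imath\phi(x,y)}u(y)$, using $|e^{\imath\phi}|=1$, and integrating against the kernel $|x-y|^{-3-2s}$ gives the claim. Your remark that no regularity of $A$ beyond measurability is needed is also accurate.
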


\begin{thm}\label{Sembedding}\cite{DS}
	The space $H^{s}_{\e}$ is continuously embedded in $L^{r}(\R^{3}, \C)$ for all $r\in [2, 2^{*}_{s}]$, and compactly embedded in $L^{r}_{loc}(\R^{N}, \C)$ for all $r\in [1, \2)$.
	Moreover, if $V_\infty=\infty$, then, for any bounded sequence $(u_{n})$  in $\h$, we have that, up to a subsequence, $(|u_{n}|)$ is strongly convergent in $L^{r}(\R^{3}, \R)$ for all $r\in [2, 2^{*}_{s})$.
\end{thm}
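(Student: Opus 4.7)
The plan is to bootstrap from the diamagnetic inequality (Lemma \ref{DI}) together with the classical embeddings of the real fractional Sobolev space $H^s(\R^3,\R)$, and to treat the three claims in the order stated.

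\textbf{Step 1: continuous embedding into $L^r(\R^3,\C)$.} Given $u\in\h$, Lemma \ref{DI} yields $|u|\in H^s(\R^3,\R)$ with $[|u|]\leq [u]_{A_\e}\leq\|u\|_\e$. Under hypothesis $(V)$, $V$ is continuous and positive with $V_0:=\inf_{\R^3}V>0$, hence $V_\e(x)\geq V_0$ pointwise and
\[
V_0\,|u|_2^{2}\leq \int_{\R^3}V_\e(x)|u|^{2}\,dx\leq \|u\|_\e^{2}.
\]
Therefore $\||u|\|_{H^s(\R^3,\R)}\leq C\|u\|_\e$, and the standard fractional Sobolev embedding $H^s(\R^3,\R)\hookrightarrow L^{r}(\R^3,\R)$ for $r\in[2,2^{*}_{s}]$ delivers $|u|_{r}\leq C'_{r}\|u\|_\e$.

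\textbf{Step 2: local compact embedding.} Fix a bounded domain $\Omega\subset\R^3$. Because $A\in C^{0,\alpha}$ is locally bounded, the elementary bound $|e^{\imath\theta}-1|\leq|\theta|$ combined with the triangle inequality yields, for a.e.\ $x,y\in\Omega$,
\[
|u(x)-u(y)|^{2}\leq 2\bigl|u(x)-e^{\imath(x-y)\cdot A_\e(\frac{x+y}{2})}u(y)\bigr|^{2}+2C_\Omega\,|x-y|^{2}|u(y)|^{2}.
\]
Dividing by $|x-y|^{3+2s}$ and integrating over $\Omega\times\Omega$, the second contribution is bounded by a multiple of $|u|_{L^2(\Omega)}^{2}$, since $|x-y|^{-(1+2s)}$ is integrable on bounded sets in $\R^3$ for $s<1$. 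Together with Step 1 this gives $\|u\|_{H^s(\Omega,\C)}\leq C_\Omega\|u\|_\e$. Splitting $u$ into real and imaginary parts (each of which sits in $H^s(\Omega,\R)$ with controlled norm) and applying the Rellich--Kondrachov theorem produces the compactness of $\h\hookrightarrow L^{r}(\Omega,\C)$ for every $r\in[1,2^{*}_{s})$.

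\textbf{Step 3: global convergence of $|u_n|$ when $V_\infty=\infty$.} Let $(u_{n})$ be bounded in $\h$. By Lemma \ref{DI} the sequence $(|u_{n}|)$ is bounded in $H^{s}(\R^3,\R)$, so up to a subsequence $|u_{n}|\to v$ a.e.\ and strongly in $L^{r}_{\mathrm{loc}}(\R^3,\R)$ for every $r\in[1,2^{*}_{s})$ by the classical local compactness. Given $\eta>0$, the assumption $V_\infty=\infty$ produces $R>0$ such that $V_\e(x)\geq 1/\eta$ for $|x|\geq R$, whence
\[
\int_{|x|\geq R}|u_{n}|^{2}\,dx\leq \eta\int_{\R^3}V_\e|u_{n}|^{2}\,dx\leq \eta\sup_{n}\|u_{n}\|_\e^{2}.
\]
This uniform tightness, combined with the local $L^{2}$ convergence, upgrades $|u_{n}|\to v$ to strong convergence in $L^{2}(\R^{3},\R)$. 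Interpolating against the uniform $L^{2^{*}_{s}}$ bound from Step 1,
\[
\bigl||u_{n}|-v\bigr|_{r}\leq \bigl||u_{n}|-v\bigr|_{2}^{1-\lambda}\,\bigl||u_{n}|-v\bigr|_{2^{*}_{s}}^{\lambda},\qquad \lambda\in(0,1),
\]
then yields strong convergence in $L^{r}(\R^{3},\R)$ for every $r\in[2,2^{*}_{s})$.

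\textbf{Main obstacle.} The delicate step is Step 2: the phase $e^{\imath(x-y)\cdot A_\e((x+y)/2)}$ does not factor out of the Gagliardo quotient, so one cannot directly apply real fractional Sobolev embeddings to $u$ itself. The core of the argument is the localized comparison between $[u]_{A_\e}$ and the ordinary Gagliardo seminorm, which genuinely uses both the local boundedness of $A$ and the integrability $s<1$ of the singular kernel against the $O(|x-y|^{2})$ phase error; every other estimate is a fairly routine application of the diamagnetic inequality together with interpolation and the Rellich--Kondrachov theorem.
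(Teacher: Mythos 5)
Your proof is correct and follows essentially the same route as the cited source \cite{DS} (whose argument the paper invokes without reproducing it): the diamagnetic inequality plus $V_\e\geq V_0>0$ for the global continuous embedding, a local comparison of the magnetic Gagliardo seminorm with the ordinary one (using the local boundedness of $A$ and the bound $|e^{\imath\theta}-1|\leq|\theta|$) for the local compactness, and tightness from $V_\infty=\infty$ combined with interpolation for the last assertion. The only cosmetic point is that the radius in Step 3 should be $R/\e$ rather than $R$ after the rescaling $V_\e(x)=V(\e x)$, which does not affect the argument for fixed $\e$.
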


\begin{lem}\label{aux}\cite{AD}
If $u\in H^{s}(\R^{3}, \R)$ and $u$ has compact support, then $w=e^{\imath A(0)\cdot x} u \in \h$.
\end{lem}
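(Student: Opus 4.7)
The plan is to check the two conditions defining $H^{s}_{\e}$ separately: the weighted $L^{2}$ integrability $\int_{\R^{3}} V_{\e}(x)|w|^{2}\,dx<\infty$ and the finiteness of the magnetic Gagliardo seminorm $[w]^{2}_{A_{\e}}<\infty$. For the first, note that $|w|=|u|$ and $u$ has compact support, so the integral reduces to a bounded continuous function $V_{\e}$ against $|u|^{2}\in L^{1}(\R^{3})$ restricted to a compact set, which is trivially finite.

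The real content is the seminorm estimate. Writing $\phi(x,y):=(x-y)\cdot A_{\e}\!\bigl(\tfrac{x+y}{2}\bigr)$, I would first observe that
\[
w(x)-e^{\imath\phi(x,y)}w(y)=e^{\imath A(0)\cdot x}\Bigl[u(x)-e^{\imath(x-y)\cdot\left[A_{\e}(\frac{x+y}{2})-A(0)\right]}u(y)\Bigr],
\]
so that, taking moduli (which kills the outer phase) and using the elementary inequality $|a+b|^{2}\leq 2|a|^{2}+2|b|^{2}$ applied to the splitting $u(x)-e^{\imath\psi}u(y)=(u(x)-u(y))+u(y)(1-e^{\imath\psi})$, one gets
\[
|w(x)-e^{\imath\phi(x,y)}w(y)|^{2}\leq 2|u(x)-u(y)|^{2}+2|u(y)|^{2}\bigl|1-e^{\imath(x-y)\cdot[A_{\e}(\frac{x+y}{2})-A(0)]}\bigr|^{2}.
\]
Integrating the first summand against $|x-y|^{-3-2s}$ gives $2[u]^{2}<\infty$ since $u\in H^{s}(\R^{3},\R)$.

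For the second summand, I would split the domain into $\{|x-y|<1\}$ and $\{|x-y|\geq 1\}$. In the far region I use $|1-e^{\imath t}|\leq 2$, so the integral is dominated by $8|u|_{2}^{2}\int_{|z|\geq 1}|z|^{-3-2s}\,dz<\infty$. In the near region I use $|1-e^{\imath t}|\leq |t|$ together with the H\"older continuity of $A$: since $u(y)=0$ outside some ball $B_{R}$ and $|x-y|<1$ forces $\bigl|\tfrac{x+y}{2}\bigr|\leq R+\tfrac{1}{2}$, the difference $A_{\e}\!\bigl(\tfrac{x+y}{2}\bigr)-A(0)=A\!\bigl(\e\tfrac{x+y}{2}\bigr)-A(0)$ is bounded by a constant $C_{\e,R}$ depending only on $\e$, $R$ and the H\"older seminorm of $A$. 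This reduces the near-region integral to $C\int_{\supp u}|u(y)|^{2}\,dy\int_{|z|<1}|z|^{-1-2s}\,dz<\infty$, since $-1-2s>-3$. Summing the two regions yields $[w]^{2}_{A_{\e}}<\infty$ and hence $w\in H^{s}_{\e}$.

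The only mildly delicate point is the bookkeeping for the phase $1-e^{\imath\psi}$; no compactness or sharp estimate is needed, the argument is essentially a factorization of the magnetic phase combined with the standard near/far splitting used for fractional seminorms. The compactness of the support of $u$ is what allows the H\"older control $|A(\e z)-A(0)|\lesssim \e^{\alpha}|z|^{\alpha}$ to stay bounded on the relevant domain of integration; without it, one would need further decay of $u$ at infinity.
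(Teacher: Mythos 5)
The paper cites Lemma \ref{aux} from \cite{AD} without reproducing a proof, so there is no internal argument to compare against; your proof is correct and is essentially the argument given in that reference: factor out the global phase $e^{\imath A(0)\cdot x}$, split $u(x)-e^{\imath\psi}u(y)$ into $(u(x)-u(y))+u(y)(1-e^{\imath\psi})$, and use $|1-e^{\imath t}|\le\min(2,|t|)$ together with the H\"older continuity of $A$ on the compact set where $u$ lives. The only small item you omitted is checking $w\in L^{\2}(\R^{3},\C)$, which is required by the definition of $D^{s,2}_{A_{\e}}$, but this is immediate since $|w|=|u|$ and $H^{s}(\R^{3},\R)\subset L^{\2}(\R^{3},\R)$.
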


\begin{lem}\label{cutoff}\cite{DS}
Let $u\in H^{s}_{A}$ and $\varphi\in C^{0,1}(\R^{3})$ with $0\leq \varphi\leq 1$. Then, for every pair of measurable sets $E_{1}, E_{2}\subset \R^{3}$, we have
\begin{align*}
\iint_{E_{1}\times E_{2}} \frac{|u(x)\varphi(x)-u(y)\varphi(y)|^{2}}{|x-y|^{3+2s}} dx dy&\leq C\min\left\{\int_{E_1}|u|^{2}dx, \int_{E_2}|u|^{2}dx\right\} \\
&\quad +C\iint_{E_{1}\times E_{2}} \frac{|u(x)-u(y)|^{2}}{|x-y|^{3+2s}} dx dy,
\end{align*}
where $C$ depends on $s$ and on the Lipschitz constant of $\varphi$.
\end{lem}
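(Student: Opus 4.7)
The plan is to expand the difference $u(x)\varphi(x)-u(y)\varphi(y)$ using the standard algebraic identity
$$u(x)\varphi(x)-u(y)\varphi(y)=\varphi(x)[u(x)-u(y)]+u(y)[\varphi(x)-\varphi(y)],$$
and then apply $(a+b)^{2}\leq 2a^{2}+2b^{2}$. Since $0\leq \varphi\leq 1$, this gives pointwise
$$|u(x)\varphi(x)-u(y)\varphi(y)|^{2}\leq 2|u(x)-u(y)|^{2}+2|u(y)|^{2}|\varphi(x)-\varphi(y)|^{2}.$$
Inserting this bound into the integral over $E_{1}\times E_{2}$, the first summand reproduces the Gagliardo-type term appearing on the right-hand side of the claim, so the real task reduces to controlling the cross contribution $\iint_{E_{1}\times E_{2}} |u(y)|^{2}|\varphi(x)-\varphi(y)|^{2}|x-y|^{-(3+2s)}\,dx\,dy$ by a multiple of $\int_{E_{2}}|u|^{2}\,dy$ (and, by the analogous trick, of $\int_{E_{1}}|u|^{2}\,dx$).

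For this cross term I would use the two-regime bound
$$|\varphi(x)-\varphi(y)|\leq \min\bigl(L|x-y|,\,2\bigr),$$
where $L$ is the Lipschitz constant of $\varphi$ and the factor $2$ comes from $0\leq \varphi\leq 1$. Fixing $y$ and enlarging $E_{1}$ to all of $\R^{3}$, I split the $x$-integral according to $|x-y|\leq 1$ and $|x-y|>1$ and estimate
$$\int_{\R^{3}}\frac{|\varphi(x)-\varphi(y)|^{2}}{|x-y|^{3+2s}}\,dx\leq L^{2}\int_{|z|\leq 1}\frac{dz}{|z|^{1+2s}}+4\int_{|z|>1}\frac{dz}{|z|^{3+2s}}\leq C(s,L),$$
the two integrals being convergent precisely because $s\in(0,1)$. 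Fubini then yields
$$\iint_{E_{1}\times E_{2}}\frac{|u(y)|^{2}|\varphi(x)-\varphi(y)|^{2}}{|x-y|^{3+2s}}\,dx\,dy\leq C\int_{E_{2}}|u(y)|^{2}\,dy.$$

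To recover the minimum on the right-hand side rather than just one of the two $L^{2}$-norms, I would run the entire argument a second time starting from the symmetric decomposition
$$u(x)\varphi(x)-u(y)\varphi(y)=\varphi(y)[u(x)-u(y)]+u(x)[\varphi(x)-\varphi(y)],$$
which produces the same estimate with $\int_{E_{1}}|u(x)|^{2}\,dx$ in place of $\int_{E_{2}}|u(y)|^{2}\,dy$. Since both bounds hold, their minimum does as well. The only genuinely delicate point is the diagonal singularity in $|\varphi(x)-\varphi(y)|^{2}|x-y|^{-(3+2s)}$, which is tamed only thanks to the Lipschitz regularity of $\varphi$; without it, the first of the two integrals above would diverge. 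Everything else is routine manipulation.
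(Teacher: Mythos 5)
Your proof is correct. The paper gives no proof of this lemma (it is quoted from \cite{DS}), and your argument --- the Leibniz-type splitting $u(x)\varphi(x)-u(y)\varphi(y)=\varphi(x)[u(x)-u(y)]+u(y)[\varphi(x)-\varphi(y)]$, the two-regime bound $|\varphi(x)-\varphi(y)|\leq\min(L|x-y|,2)$ to make the kernel integrable near and away from the diagonal, and the symmetric decomposition to obtain the minimum --- is precisely the standard one used in the cited reference.
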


\noindent
We also have the following fractional version of vanishing Lions lemma:
\begin{lem}\label{Lions}\cite{FQT}
	Let $r\in [2, \2)$. If $(u_{n})$ is a bounded sequence in $H^{s}(\R^{3}, \R)$ and if 
	\begin{equation*}
	\lim_{n\rightarrow \infty} \sup_{y\in \R^{3}} \int_{B_{R}(y)} |u_{n}|^{r} dx=0
	\end{equation*}
	for some $R>0$, then $u_{n}\rightarrow 0$ in $L^{t}(\R^{3}, \R)$ for all $t\in (2, 2^{*}_{s})$.
\end{lem}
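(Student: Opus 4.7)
The plan is to combine a local fractional Sobolev embedding with a bounded-overlap covering of $\R^{3}$ and H\"older interpolation. The argument first produces $L^{t_{0}}$-convergence to $0$ for one specific exponent $t_{0} \in (2, \2)$ determined by $r$, and then propagates this to every $t \in (2, \2)$ by interpolating against the endpoint spaces $L^{2}$ and $L^{\2}$, in both of which the sequence $(u_{n})$ is bounded thanks to the continuous embeddings $H^{s}(\R^{3}, \R) \hookrightarrow L^{2}(\R^{3}, \R)$ and $H^{s}(\R^{3}, \R) \hookrightarrow L^{\2}(\R^{3}, \R)$.

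First I would cover $\R^{3}$ by balls $\{B_{R}(y_{i})\}_{i\geq 1}$ centred on a lattice of mesh proportional to $R$, so that the covering multiplicity is a finite constant depending only on the dimension. Since each ball is a Lipschitz domain, the standard $W^{s,2}$-embedding on bounded Lipschitz sets yields
\[
\|u\|_{L^{\2}(B_{R}(y_{i}))}^{2} \leq C_{0}\left(\|u\|_{L^{2}(B_{R}(y_{i}))}^{2} + \iint_{B_{R}(y_{i})\times B_{R}(y_{i})} \frac{|u(x)-u(y)|^{2}}{|x-y|^{3+2s}}\,dx\,dy\right),
\]
and summing over $i$, using the bounded overlap of both the balls and the product sets $B_{R}(y_{i})\times B_{R}(y_{i})$, gives $\sum_{i} \|u\|_{L^{\2}(B_{R}(y_{i}))}^{2} \leq C_{1}\|u\|_{H^{s}(\R^{3})}^{2}$.

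I would then choose $\lambda\in(0,1)$ solving simultaneously $\frac{1}{t_{0}} = \frac{\lambda}{r} + \frac{1-\lambda}{\2}$ and $(1-\lambda)t_{0}=2$; the second constraint is the crucial choice that makes the $L^{\2}$-exponent equal to $2$ and so matches the inequality above. Solving yields $t_{0} = 2 + r - 2r/\2 \in (2,\2)$ and $\lambda t_{0} = t_{0}-2 > 0$. H\"older on each ball then gives
\[
\int_{B_{R}(y_{i})} |u|^{t_{0}}\,dx \leq \|u\|_{L^{r}(B_{R}(y_{i}))}^{t_{0}-2}\,\|u\|_{L^{\2}(B_{R}(y_{i}))}^{2},
\]
and summing in $i$ while extracting $\sup_{y}\int_{B_{R}(y)}|u|^{r}$ from the first factor produces
\[
\int_{\R^{3}} |u|^{t_{0}}\,dx \leq C_{1}\left(\sup_{y\in\R^{3}} \int_{B_{R}(y)} |u|^{r}\,dx\right)^{(t_{0}-2)/r}\|u\|_{H^{s}(\R^{3})}^{2}.
\]
Applied to $u_{n}$, the vanishing hypothesis combined with the $H^{s}$-boundedness forces $u_{n}\to 0$ in $L^{t_{0}}(\R^{3},\R)$.

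For an arbitrary $t \in (2,\2)$ I would conclude by H\"older interpolation against a bounded endpoint: for $t\in(2,t_{0}]$ one uses $\|u_{n}\|_{L^{t}} \leq \|u_{n}\|_{L^{2}}^{\theta}\|u_{n}\|_{L^{t_{0}}}^{1-\theta}$, and for $t\in[t_{0},\2)$ one uses $\|u_{n}\|_{L^{t}} \leq \|u_{n}\|_{L^{t_{0}}}^{\theta}\|u_{n}\|_{L^{\2}}^{1-\theta}$, with $\theta\in(0,1)$ read off from the interpolation identity for $1/t$; in either case the bounded factor stays controlled while the $L^{t_{0}}$-factor tends to $0$. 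The main technical obstacle is the local fractional Sobolev embedding on $B_{R}(y_{i})$: unlike in the classical case, the Gagliardo seminorm on $B_{R}(y_{i})\times B_{R}(y_{i})$ is not a pointwise-local quantity, so care is needed to sum it against the full $\R^{6}$-integral without inflating the constant $C_{1}$. One can bypass this either by invoking a standard extension theorem for $W^{s,2}$ on Lipschitz balls or via a cutoff argument in the spirit of Lemma \ref{cutoff}; once the local embedding is in place, the rest of the proof is the purely algebraic interpolation above.
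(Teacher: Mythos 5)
Your argument is correct, and it is the standard proof of this vanishing lemma (covering by balls of bounded overlap, local fractional Sobolev embedding, H\"older interpolation tuned so the $L^{\2}$-exponent equals $2$, then endpoint interpolation to reach all $t\in(2,\2)$); the paper itself gives no proof but simply cites \cite{FQT}, where essentially this same argument appears. The ``technical obstacle'' you flag at the end is in fact harmless: if the balls $B_{R}(y_{i})$ have overlap multiplicity $N$ in $\R^{3}$, then $\sum_{i}\mathbf{1}_{B_{R}(y_{i})\times B_{R}(y_{i})}(x,y)\leq N$ pointwise on $\R^{6}$, so the local Gagliardo seminorms sum to at most $N[u]^{2}$ and the constant $C_{1}$ is controlled without any extension or cutoff argument.
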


\section{The variational framework}
Hereafter, we will work with the following problem equivalent to \eqref{Pe}, which is obtained via the change of variable
$u(x)\mapsto u(\e x)$:
\begin{equation}\label{Pe}
(a+b[u]^{2}_{A_{\e}})(-\Delta)^{s}_{A_{\e}} u + V_{\e}(x)u=  f(|u|^{2})u+|u|^{\2-2}u \quad \mbox{ in } \R^{3},
\end{equation}
where $A_{\e}(x):=A(\e x)$ and $V_{\e}(x):=V(\e x)$. \\
In order to find weak solutions to \eqref{Pe}, we will look for critical points of the following energy functional $J_{\e}: \h\rightarrow \R$ defined as
\begin{align*}
J_{\e}(u)=\frac{1}{2}\|u\|_{\e}^{2}+\frac{b}{4}[u]_{A_{\e}}^{4}-\frac{1}{2}\int_{\R^{3}} F(|u|^{2})dx-\frac{1}{\2}|u|_{\2}^{\2}.
\end{align*}
In view of assumptions $(f_1)$-$(f_2)$ and $(V)$, it is easy to check that $J_{\e}\in C^{1}(\h, \R)$ and that its differential $J_{\e}'$ is given by
\begin{align*}
\langle J_{\e}'(u), v\rangle &=\langle u, v\rangle_{\e}+ \Re\left(b[u]^{2}_{A_{\e}} \iint_{\R^{6}} \frac{(u(x)-e^{\imath(x-y)\cdot A_{\e}(\frac{x+y}{2})} u(y))\overline{(v(x)-e^{\imath(x-y)\cdot A_{\e}(\frac{x+y}{2})}v(y))}}{|x-y|^{3+2s}} dx dy\right) \\
&-\Re\int_{\R^{3}} f(|u|^{2})u\bar{v}+|u|^{\2-2}u\bar{v} \, dx \quad \forall u, v\in \h.
\end{align*}

\noindent
Let us observe that $J_{\e}$ has a mountain pass structure \cite{AR}, that is:
\begin{lem}\label{MPG}
\begin{compactenum}[$(i)$]
\item $J_{\e}(0)=0$;
\item there exist $\alpha, \rho>0$ such that $J_{\e}(u)\geq \alpha$ for any $u\in \h$ such that $\|u\|_{\e}=\rho$;
\item there exists $e\in \h$ with $\|e\|_{\e}>\rho$ such that $J_{\e}(e)<0$.
\end{compactenum}
\end{lem}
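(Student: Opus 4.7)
The plan is to verify the three items of the mountain pass geometry directly from the definition of $J_{\e}$, using the growth hypotheses $(f_1)$–$(f_3)$ together with the continuous embeddings provided by Theorem \ref{Sembedding}. Item $(i)$ is immediate since every term defining $J_{\e}(0)$ vanishes, so the work is entirely in $(ii)$ and $(iii)$.

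For $(ii)$, I would first turn $(f_1)$ and $(f_2)$ into the standard pointwise bound: given any $\eta>0$, there exists $C_{\eta}>0$ with
\begin{equation*}
F(t) \leq \eta\,t^{2} + C_{\eta}\,t^{q/2} \qquad \text{for all } t\geq 0,
\end{equation*}
so that $F(|u|^{2}) \leq \eta|u|^{4} + C_{\eta}|u|^{q}$. Dropping the nonnegative Kirchhoff term $\tfrac{b}{4}[u]_{A_{\e}}^{4}$ and invoking Theorem \ref{Sembedding} to embed $\h \hookrightarrow L^{4}(\R^{3},\C)$, $L^{q}(\R^{3},\C)$ and $L^{\2}(\R^{3},\C)$ (note that the assumption $s>3/4$ ensures $4<\2$, and $4<q<\2$ by $(f_2)$), I obtain
\begin{equation*}
J_{\e}(u) \geq \tfrac{1}{2}\|u\|_{\e}^{2} - \tfrac{\eta}{2}C_{1}\|u\|_{\e}^{4} - \tfrac{C_{\eta}}{2}C_{2}\|u\|_{\e}^{q} - \tfrac{C_{3}}{\2}\|u\|_{\e}^{\2}.
\end{equation*}
Since all three negative terms have exponents strictly greater than $2$, after fixing $\eta$ sufficiently small one can choose $\rho>0$ so small that $J_{\e}(u)\geq \alpha>0$ whenever $\|u\|_{\e}=\rho$.

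For $(iii)$, I would pick any $u_{0}\in C_{c}^{\infty}(\R^{3},\C)\subset\h$ with $u_{0}\not\equiv 0$ and examine $J_{\e}(tu_{0})$ along the ray $t>0$. Keeping only the positive quadratic and quartic (Kirchhoff) contributions and the negative critical one, I have
\begin{equation*}
J_{\e}(tu_{0}) \leq \tfrac{t^{2}}{2}\|u_{0}\|_{\e}^{2} + \tfrac{b\,t^{4}}{4}[u_{0}]_{A_{\e}}^{4} - \tfrac{t^{\2}}{\2}|u_{0}|_{\2}^{\2},
\end{equation*}
and because the assumption $s>3/4$ gives $\2>4$, the critical term dominates as $t\to\infty$, so $J_{\e}(tu_{0})\to -\infty$. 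Taking $e:=t_{0}u_{0}$ for $t_{0}$ large enough that both $\|t_{0}u_{0}\|_{\e}>\rho$ and $J_{\e}(t_{0}u_{0})<0$ finishes the proof.

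The only delicate point in this program is the interaction of the magnetic Kirchhoff term $\tfrac{b}{4}[u]_{A_{\e}}^{4}$ with the other nonlinear terms: near zero it is a higher-order perturbation of $\tfrac{1}{2}\|u\|_{\e}^{2}$ so it cannot prevent the sphere estimate in $(ii)$, while at infinity it is eventually crushed by the critical term thanks to the strict inequality $\2>4$ enforced by the standing hypothesis $s>3/4$. If one preferred not to lean on the critical term, $(f_3)$ would give $F(t)\geq c\,t^{\theta/2}$ for large $t$ with $\theta>4$, which would equally provide the coercivity-from-below needed in $(iii)$; either route works, and the proof is essentially standard once these growth observations are in place.
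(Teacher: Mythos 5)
Your proof is correct and follows essentially the same standard route as the paper: $(i)$ is trivial, $(ii)$ uses the $(f_1)$--$(f_2)$ growth bound on $F$ plus the Sobolev embeddings of Theorem \ref{Sembedding} to dominate $\tfrac12\|u\|_\e^2$ near the origin, and $(iii)$ sends $J_\e(tu_0)\to-\infty$ along a ray. The only (harmless) divergence is in $(iii)$: the paper invokes $(f_3)$ to produce a negative term of order $T^{\theta}$ with $\theta>4$ on a compactly supported function, whereas you exploit the critical term of order $t^{\2}$ with $\2>4$; both work for the same reason ($s>3/4$ forces the dominant negative exponent past the Kirchhoff exponent $4$), and you correctly note the equivalence of the two routes.
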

\begin{proof}
$(i)$ Since $F(0)=0$, it is obvious that $J_{\e}(0)=0$.\\
$(ii)$ By $(f_1)$ and $(f_2)$, for all $\delta>0$ there exists $C_{\delta}>0$ such that 
$$
|F(t^2)|\leq \delta |t|^{2}+C_{\delta}|t|^{q} \quad \mbox{ for all } x\in \R^{3}, t\in \R.
$$
This fact combined with Theorem \ref{Sembedding} implies that for $\delta>0$ small enough
$$
J_{\e}(u)\geq C\|u\|^{2}_{\e}-CC_{\delta}\|u\|_{\e}^{q}-C \|u\|^{\2}_{\e}
$$
from which the thesis follows.\\
$(iii)$ Fix $u\in \h\setminus\{0\}$ with $supp(u)\subset \Omega$, for some bounded domain $\Omega\subset \R^{3}$. Then, in view of $(f_3)$, we can see that
\begin{align*}
J_{\e}(Tu)&\leq \frac{T^{2}}{2} \|u\|^{2}_{\e}+b\frac{T^{4}}{4}[u]_{A_{\e}}^{4}-CT^{\theta}\int_{\Omega}|u|^{\theta}dx+C|\Omega|\rightarrow -\infty \quad \mbox{ as }  T\rightarrow \infty.
\end{align*}
\end{proof}

\noindent
Let us define the minimax level 
\begin{align*}
c_{\e}:=\inf_{\gamma\in \Gamma_{\e}} \max_{t\in [0, 1]} J_{\e}(\gamma(t)) \quad \mbox{ where } \quad \Gamma_{\e}:=\{\gamma\in C([0, 1], \h): \gamma(0)=0 \mbox{ and } J_{\e}(\gamma(1))<0\}.
\end{align*}
Using a version of the mountain pass theorem without $(PS)$ condition (see \cite{W}), we can find a Palais-Smale sequence $(u_{n})$ at the level $c_{\e}$. \\
Since we are looking for multiple critical points of the functional $J_{\e}$, we shall consider it constrained to an appropriated subset of $\h$.
More precisely, we introduce the Nehari manifold associated with (\ref{Pe}), that is
\begin{equation*}
\mathcal{N}_{\e}:= \{u\in \h \setminus \{0\} : \langle J_{\e}'(u), u \rangle =0\}.
\end{equation*}
We note that, for any $u\in \N_{\e}$, from $(f_{1})$-$(f_{2})$ we have for $\delta>0$ small enough
\begin{align*}
0&=\|u\|_{\e}^{2}+ b[u]^{4}_{A_{\e}}-|u|_{\2}^{\2} -\int_{\R^{3}} f(|u|^{2})|u|^{2}\, dx \\
&\geq \|u\|_{\e}^{2}- \delta |u|_{2}^{2}-C_{\delta} |u|_{q}^{q}- |u|_{\2}^{\2}\\
&\geq C\|u\|_{\e}^{2} -CC_{\delta} \|u\|_{\e}^{q} -C\|u\|_{\e}^{\2},
\end{align*}
so we can find $r>0$ such that
\begin{align}\label{uNr}
\|u\|_{\e}\geq r>0.
\end{align}
Now, we prove the following useful results:
\begin{lem}\label{lem2.2HZ}
For every $u\in \h\setminus \{0\}$, there exists a unique $t_{u}>0$ such that $t_{u}u\in \N_{\e}$. Moreover, $J_{\e}(t_{u}u)= \max_{t\geq 0} J_{\e}(tu)$. 
\end{lem}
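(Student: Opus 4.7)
The plan is to study the fibering map $h(t):=J_{\e}(tu)$ for $t\geq 0$ and show that it has exactly one positive critical point, which is its global maximum. Expanding, for $u\in \h\setminus\{0\}$ fixed,
\begin{equation*}
h(t)=\frac{t^{2}}{2}\|u\|_{\e}^{2}+\frac{bt^{4}}{4}[u]_{A_{\e}}^{4}-\frac{1}{2}\int_{\R^{3}}F(t^{2}|u|^{2})\,dx-\frac{t^{\2}}{\2}|u|_{\2}^{\2},
\end{equation*}
so $h\in C^{1}([0,\infty),\R)$, $h(0)=0$, and $h'(t)=\frac{1}{t}\langle J_{\e}'(tu),tu\rangle$ for $t>0$. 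Thus proving that $t_{u}u\in \N_{\e}$ is equivalent to finding $t_{u}>0$ with $h'(t_{u})=0$.

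Next I would establish existence of a maximizer. Using $(f_{1})$--$(f_{2})$ to bound $F(t^{2}|u|^{2})\leq \delta t^{2}|u|^{2}+C_{\delta}t^{q}|u|^{q}$ and Theorem \ref{Sembedding}, one gets
\begin{equation*}
h(t)\geq C_{1}t^{2}\|u\|_{\e}^{2}-C_{2}t^{q}\|u\|_{\e}^{q}-C_{3}t^{\2}\|u\|_{\e}^{\2},
\end{equation*}
which is strictly positive for small $t>0$ (since $q, \2>2$). On the other hand, pick a bounded measurable set $\Omega\subset\{|u|>0\}$ of positive measure. By $(f_{3})$ there exists $C>0$ such that $F(\tau)\geq C\tau^{\theta/2}$ for $\tau\geq 1$, hence for large $t$
\begin{equation*}
h(t)\leq \frac{t^{2}}{2}\|u\|_{\e}^{2}+\frac{bt^{4}}{4}[u]_{A_{\e}}^{4}-Ct^{\theta}\int_{\Omega}|u|^{\theta}\,dx+C',
\end{equation*}
which tends to $-\infty$ as $t\to\infty$ because $\theta>4$. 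Consequently $h$ attains its maximum on $(0,\infty)$ at some $t_{u}>0$, and $h'(t_{u})=0$ yields $t_{u}u\in \N_{\e}$.

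For the uniqueness (and the fact that this critical point is the \emph{global} maximum), I would introduce
\begin{equation*}
\psi(t):=\frac{\langle J_{\e}'(tu), tu\rangle}{t^{4}}=\frac{\|u\|_{\e}^{2}}{t^{2}}+b[u]_{A_{\e}}^{4}-\int_{\R^{3}}\frac{f(t^{2}|u|^{2})}{t^{2}|u|^{2}}|u|^{4}\,dx-t^{\2-4}|u|_{\2}^{\2},
\end{equation*}
where the integrand is interpreted as $0$ on $\{u=0\}$ in view of $(f_{1})$. The first term is strictly decreasing in $t>0$; the Kirchhoff term is constant; by $(f_{4})$ the map $\tau\mapsto f(\tau)/\tau$ is strictly increasing on $(0,\infty)$, so the third term (with a minus sign) is strictly decreasing in $t$; and since the restriction $s>3/4$ ensures $\2>4$, the last term is also strictly decreasing in $t$ after the sign flip. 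Thus $\psi$ is strictly decreasing on $(0,\infty)$, with $\psi(t)\to+\infty$ as $t\to 0^{+}$ and $\psi(t)\to -\infty$ as $t\to\infty$. It follows that $\psi$ has a unique zero $t_{u}>0$, which must coincide with the maximizer found above, and the sign of $\psi$ gives $h'(t)>0$ for $t\in(0,t_{u})$ and $h'(t)<0$ for $t>t_{u}$; so $J_{\e}(t_{u}u)=\max_{t\geq 0}J_{\e}(tu)$.

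The main subtlety is the interplay between the quartic Kirchhoff term $\tfrac{b}{4}t^{4}[u]_{A_{\e}}^{4}$ and the nonlocal critical term $t^{\2}|u|_{\2}^{\2}$: in order for $\psi$ to be monotone (uniqueness) one needs $\2>4$, and in order for $h(t)\to -\infty$ (existence of a maximum) one needs $\theta>4$. Both facts hinge on the standing assumptions $s>3/4$ and $\theta\in(4,q)\subset(4,\2)$ of $(f_{3})$, which is precisely why these hypotheses are imposed.
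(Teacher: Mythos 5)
Your proposal is correct and follows essentially the same route as the paper: the paper also reduces $h'(t)=0$ to the identity $b[u]_{A_{\e}}^{4}=-\frac{\|u\|_{\e}^{2}}{t^{2}}+\int_{\R^{3}}\frac{f(|tu|^{2})}{|tu|^{2}}|u|^{4}\,dx+t^{\2-4}|u|_{\2}^{\2}$, notes via $(f_{4})$ and $\2>4$ that the right-hand side is increasing in $t$ (your strictly decreasing $\psi$ is just the same statement after moving terms across), and combines this with the mountain-pass geometry of Lemma \ref{MPG} to get existence and uniqueness of $t_{u}$.
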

\begin{proof}
Fix $u\in \h\setminus \{0\}$ and define $h(t):= J_{\e}(tu)$ for $t\geq 0$. Let us note that $h'(t)= \langle J'_{\e}(tu), u\rangle=0$ if and only if $tu\in \N_{\e}$. Hence $h'(t)=0$ is equivalent to
\begin{align}\label{2.3HZ}
b[u]^{4}_{A_{\e}} = - \frac{\|u\|^{2}_{\e}}{t^{2}} + \int_{\R^{3}} \frac{f(|tu|^{2})}{|tu|^{2}} |u|^{4}\, dx + t^{\2 -4} |u|_{\2}^{\2}. 
\end{align}
Using $(f_{4})$ we can see that the right hand side of \eqref{2.3HZ} is an increasing function of $t$. On the other hand, arguing as in Lemma \ref{MPG}, we can see that $h(0)=0$, $h(t)>0$ for $t>0$ small and $h(t)<0$ for $t$ large. Therefore there exists a unique $t_{u}>0$ such that $h'(t_{u})=0$, that is $t_{u}u\in \N_{\e}$. In particular, $J_{\e}(t_{u}u)= \max_{t\geq 0} J_{\e}(tu)$.  
\end{proof}

\begin{lem}\label{lemTA}
Assume that $(V)$ and $(f_{1})$-$(f_{4})$ hold true. Let $(u_{n})\subset \h$ be such that 
\begin{align*}
\langle J'_{\e}(u_{n}), u_{n}\rangle \rightarrow 0 \quad \mbox{ and } \quad \int_{\R^{3}} f(|u_{n}|^{2})|u_{n}|^{2}+ |u_{n}|^{2^{*}_{s}} \, dx \rightarrow A
\end{align*}
as $n\rightarrow \infty$, for some $A>0$. Then, up to a subsequence, there exists $t_{n}>0$ such that 
\begin{align*}
\langle J'_{\e}(t_{n}u_{n}), t_{n}u_{n}\rangle =0 \quad \mbox{ and } \quad t_{n}\rightarrow 1 \mbox{ as } n\rightarrow \infty. 
\end{align*}
\end{lem}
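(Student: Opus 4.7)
The approach is to use Lemma \ref{lem2.2HZ} to define $t_n$, and then pin down its limit by exploiting the strict monotonicity in $(f_4)$ together with the inequality $\2 > 4$. First, combining the two hypotheses gives
\[
\|u_n\|_{\e}^2 + b[u_n]_{A_{\e}}^4 = A + o(1),
\]
so $(u_n)$ is bounded in $H^{s}_{\e}$. By $(f_1)$-$(f_2)$ and Theorem \ref{Sembedding}, for every $\delta > 0$ one also has $A + o(1) \leq \delta\|u_n\|_{\e}^2 + C_\delta\|u_n\|_{\e}^q + C\|u_n\|_{\e}^{\2}$, which yields $\liminf_n \|u_n\|_{\e} > 0$. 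In particular $u_n \neq 0$ for $n$ large, so Lemma \ref{lem2.2HZ} produces a unique $t_n > 0$ with $t_n u_n \in \mathcal{N}_{\e}$.

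I would next show that $(t_n)$ is bounded away from $0$ and from $\infty$. The lower bound is immediate from \eqref{uNr}: $r \leq \|t_n u_n\|_{\e} = t_n \|u_n\|_{\e}$, so $t_n$ stays away from $0$. For the upper bound I would first verify that $|u_n|_{\2}$ is bounded below: otherwise, interpolating $L^q$ between $L^2$ and $L^{\2}$ together with the growth estimate $f(t^2)t^2 \leq \delta t^2 + C_\delta t^q$ would force $\int_{\R^3} f(|u_n|^2)|u_n|^2\,dx \to 0$, contradicting the hypothesis that the sum tends to $A > 0$. Dividing the identity $\langle J'_{\e}(t_n u_n), t_n u_n\rangle = 0$ by $t_n^4$ then produces
\begin{equation*}
\frac{\|u_n\|_{\e}^2}{t_n^2} + b[u_n]_{A_{\e}}^4 = \int_{\R^3}\frac{f(|t_n u_n|^2)}{|t_n u_n|^2}|u_n|^4\,dx + t_n^{\2 - 4}|u_n|_{\2}^{\2},
\end{equation*}
and since the left-hand side is bounded while $\2 - 4 > 0$ and $|u_n|_{\2}$ is bounded below, $t_n \to \infty$ would blow up the last term on the right-hand side.

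Finally, extracting a subsequence $t_n \to t_0 \in (0, \infty)$, I would subtract the displayed equation from the corresponding identity at $t = 1$ (which is $\langle J'_{\e}(u_n), u_n\rangle = o(1)$ rewritten using $f(|u_n|^2)|u_n|^2 = \tfrac{f(|u_n|^2)}{|u_n|^2}|u_n|^4$). The $b[u_n]_{A_{\e}}^4$ terms cancel, leaving
\begin{equation*}
\|u_n\|_{\e}^2\Bigl(\tfrac{1}{t_n^2} - 1\Bigr) = \int_{\R^3}\left[\frac{f(|t_n u_n|^2)}{|t_n u_n|^2} - \frac{f(|u_n|^2)}{|u_n|^2}\right]|u_n|^4\,dx + (t_n^{\2 - 4} - 1)|u_n|_{\2}^{\2} + o(1).
\end{equation*}
By $(f_4)$ together with $\2 > 4$, each term on the right-hand side has the same sign as $t_n - 1$, while the left-hand side has the opposite sign; since $\|u_n\|_{\e}$ and $|u_n|_{\2}$ are bounded away from zero, a nonzero limit of $t_n - 1$ produces strictly incompatible signs in the limit, forcing $t_0 = 1$. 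Uniqueness of this limit then gives $t_n \to 1$ for the entire sequence. The main difficulty in this plan is precisely this last sign comparison: because $u_n$ is not known to converge (the critical exponent obstructs strong compactness), one must carefully propagate the uniform positive lower bounds on $\|u_n\|_{\e}$ and $|u_n|_{\2}$ in order to guarantee strictly opposite signs when $t_0 \neq 1$.
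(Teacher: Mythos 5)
Your argument is correct and follows essentially the same route as the paper: produce $t_n$ via Lemma \ref{lem2.2HZ}, show $0<T_1\le t_n\le T_2$, subtract the Nehari identities at $t_n$ and at $1$, and use $(f_4)$ together with $\2>4$ and the strict lower bound on $\|u_n\|_{\e}$ to force the limit to be $1$. The only (harmless) variations are that you rule out $t_n\to\infty$ through the critical term $t_n^{\2-4}|u_n|_{\2}^{\2}$ after bounding $|u_n|_{\2}$ from below by interpolation, where the paper instead invokes the lower bound $f(t)\ge \lambda t^{\frac{\sigma-2}{2}}$ from $(f_2)$, and that you place the strict sign on the $\|u_n\|_{\e}^{2}\bigl(\tfrac{1}{t_n^{2}}-1\bigr)$ term rather than on the limit of the right-hand side.
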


\begin{proof}
By the assumptions of lemma, we can deduce that $\|u_{n}\|_{\e}\neq 0$ for $n$ large. Hence, by Lemma \ref{lem2.2HZ}, we can find $t_{n}>0$ such that $t_{n}u_{n}\in \N_{\e}$, i.e. $\langle J'_{\e}(t_{n}u_{n}), t_{n}u_{n}\rangle =0$. Now, we prove that $t_{n}\rightarrow 1$ as $n\rightarrow \infty$. 
Let us define
\begin{align*}
&A_{n}:= \|u_{n}\|_{\e}^{2}, \quad B_{n}:= b[u_{n}]^{4}_{A_{\e}}, \\
&C_{n}:= \int_{\R^{3}} f(|u_{n}|^{2})|u_{n}|^{2}\, dx, \quad D_{n}:= |u_{n}|^{\2}_{\2}. 
\end{align*}
By the assumptions, we have that $C_{n}+D_{n}\rightarrow A>0$ as $n\rightarrow \infty$. Up to a subsequence, we may assume that as $n\rightarrow \infty$
\begin{align*}
A_{n}\rightarrow A_{0}, \quad B_{n}\rightarrow B_{0}, \quad C_{n}\rightarrow C_{0},\quad D_{n}\rightarrow D_{0}. 
\end{align*}
Recalling that $\langle J'_{\e}(u_{n}), u_{n}\rangle=o_{n}(1)$, we can deduce that $A_{0}+B_{0}= C_{0}+D_{0}=A$ and $A_{0}>0$ (otherwise, we obtain a contradiction in view of $A>0$). Since $\langle J'_{\e}(t_{n}u_{n}), t_{n}u_{n}\rangle =0$, by $(f_{1})$-$(f_2)$, we obtain 
\begin{align*}
t_{n}^{2}A_{n} + t_{n}^{4}B_{n} \geq t_{n}^{\sigma}C|u_{n}|^{\sigma}_{q}+ t_{n}^{2^{*}_{s}} D_{n},
\end{align*}
and 
\begin{align*}
t_{n}^{2}A_{n}+ t_{n}^{4}B_{n}\leq C\e t_{n}^{2}|u_{n}|_{2}^{2} + Ct_{n}^{q}|u_{n}|^{q}_{q} + t_{n}^{2^{*}_{s}}D_{n},
\end{align*}
which imply that there exist $T_{1}, T_{2}>0$ such that $0<T_{1}\leq t_{n}\leq T_{2}$. Hence, up to a subsequence, still denoted by $(t_{n})$, we may assume that $t_{n}\rightarrow T>0$ as $n\rightarrow \infty$. Combining $\langle J'_{\e}(t_{n}u_{n}), t_{n}u_{n}\rangle =0$ and $\langle J'_{\e}(u_{n}), u_{n}\rangle=o_{n}(1)$, we have 
\begin{align*}
t_{n}^{2}A_{n}+ t_{n}^{4}B_{n}= \int_{\R^{3}} f(|t_{n}u_{n}|^{2}) |t_{n}u_{n}|^{2}\, dx + t_{n}^{\2}D_{n}+ o_{n}(1), \quad A_{n}+B_{n}= C_{n}+D_{n}+o_{n}(1)
\end{align*}
which leads to 
\begin{align*}
\int_{\R^{3}} \left(\frac{f(|t_{n}u_{n}|^{2})}{|t_{n}u_{n}|^{2}}- \frac{f(|u_{n}|^{2})}{|u_{n}|^{2}}\right)|u_{n}|^{4}\, dx &= \left(\frac{1}{t_{n}^{2}}-1\right) A_{n}+ (1-t_{n}^{2^{*}_{s}-4})D_{n}+o_{n}(1)\\
&=\left(\frac{1}{T^{2}}-1\right) A_{0}+ (1-T^{2^{*}_{s}-4})D_{0}. 
\end{align*}
Finally, in view of $(f_4)$, we can show that $T=1$. Indeed, if $T>1$, using Fatou's Lemma, $\left(\frac{1}{T^{2}}-1\right) A_{0}+ (1-T^{2^{*}_{s}-4})D_{0}<0$, $|u_{n}|\rightarrow |u|$ a.e. in $\R^{3}$ and $(f_4)$ we deduce that 
\begin{align*}
0< \int_{\R^{3}} \left(\frac{f(|Tu|^{2})}{|Tu|^{2}}- \frac{f(|u|^{2})}{|u|^{2}}\right)|u|^{4}\, dx< 0
\end{align*}
which is a contradiction. A similar argument works when $T<1$. Consequently, $T=1$. 
\end{proof}

\noindent
Next, we define the numbers 
\begin{align*}
c_{\e}^{*}:=\inf_{u\in \N_{\e}} J_{\e}(u) \quad \mbox{ and } \quad c_{\e}^{**}:= \inf_{u\in \h\setminus\{0\}} \max_{t\geq 0} J_{\e}(tu). 
\end{align*}
In the spirit of Rabinowitz \cite{Rab}, we can prove the following useful result. 

\begin{lem}
For any fixed $\e>0$ we have $c_{\e}=c_{\e}^{*}= c_{\e}^{**}$. 
\end{lem}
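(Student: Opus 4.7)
The plan is to close a cycle of inequalities: $c_\e^* \ge c_\e^{**}$, $c_\e^{**} \ge c_\e^*$, $c_\e \le c_\e^{**}$, and $c_\e \ge c_\e^*$. The key tool throughout is Lemma \ref{lem2.2HZ}: for every $u \in \h\setminus\{0\}$ there is a unique $t_u > 0$ with $t_u u \in \N_{\e}$ and $J_\e(t_u u) = \max_{t \ge 0} J_\e(tu)$; uniqueness forces $t_u = 1$ whenever $u \in \N_{\e}$.

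First, for $c_\e^* = c_\e^{**}$: when $u \in \N_{\e}$, Lemma \ref{lem2.2HZ} gives $J_\e(u) = \max_{t\ge 0} J_\e(tu) \ge c_\e^{**}$; taking the infimum over $\N_{\e}$ yields $c_\e^* \ge c_\e^{**}$. Conversely, for any $u \ne 0$, $\max_{t\ge 0} J_\e(tu) = J_\e(t_u u) \ge c_\e^*$, and the infimum over $u \ne 0$ yields $c_\e^{**} \ge c_\e^*$.

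For $c_\e \le c_\e^{**}$, I would fix $u \ne 0$. Since $s > 3/4$ forces $\2 > 4$ and $|u|_{\2} > 0$, the critical term $-\frac{T^{\2}}{\2}|u|_{\2}^{\2}$ dominates in $J_\e(Tu)$ as $T \to \infty$, so $J_\e(Tu) \to -\infty$. Choosing $T$ with $J_\e(Tu) < 0$, the straight path $\gamma(t) := tTu$ lies in $\Gamma_\e$ and satisfies
$\max_{t \in [0,1]} J_\e(\gamma(t)) = \max_{s \in [0,T]} J_\e(su) \le \max_{s \ge 0} J_\e(su)$;
passing to infima over $u\ne 0$ gives the inequality.

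The main obstacle is $c_\e \ge c_\e^*$, i.e.\ showing every $\gamma \in \Gamma_\e$ intersects $\N_{\e}$. Set $h(t) := \langle J_\e'(\gamma(t)), \gamma(t)\rangle$; this is continuous in $t$. The estimates from $(f_1)$-$(f_2)$ used in Lemma \ref{MPG}(ii) yield $h(t) \ge c\|\gamma(t)\|_\e^2 \ge 0$ whenever $\|\gamma(t)\|_\e$ is sufficiently small. On the other hand, the algebraic identity
\[
J_\e(u) - \tfrac{1}{\theta}\langle J_\e'(u), u\rangle = \bigl(\tfrac{1}{2}-\tfrac{1}{\theta}\bigr)\|u\|_\e^2 + b\bigl(\tfrac{1}{4}-\tfrac{1}{\theta}\bigr)[u]_{A_{\e}}^4 + \int_{\R^{3}}\!\bigl(\tfrac{1}{\theta}f(|u|^2)|u|^2 - \tfrac{1}{2}F(|u|^2)\bigr)\,dx + \bigl(\tfrac{1}{\theta}-\tfrac{1}{\2}\bigr)|u|_{\2}^{\2}
\]
has all four summands nonnegative, using $4 < \theta < q < \2$ together with the pointwise bound $\tfrac{\theta}{2}F(|u|^2) \le f(|u|^2)|u|^2$ from $(f_3)$; hence $J_\e(u) \ge \tfrac{1}{\theta}\langle J_\e'(u), u\rangle$. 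Since $J_\e(\gamma(1)) < 0$, this forces $h(1) < 0$. Let $t^* := \inf\{t \in [0,1] : h(t) < 0\}$; by openness of $\{h < 0\}$ together with continuity, $h(t^*) = 0$, while $\gamma(t^*) = 0$ is ruled out: otherwise continuity of $\gamma$ would keep $\|\gamma(t)\|_\e$ small in a neighbourhood of $t^*$, forcing $h \ge 0$ there and contradicting the definition of $t^*$. Hence $\gamma(t^*) \in \N_{\e}$, so $\max_{t \in [0,1]} J_\e(\gamma(t)) \ge J_\e(\gamma(t^*)) \ge c_\e^*$, and taking the infimum over $\gamma$ closes the chain.
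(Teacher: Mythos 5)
Your proof is correct. The identifications $c_\e^* = c_\e^{**}$ and $c_\e \le c_\e^{**}$ follow the paper. For the key inequality $c_\e \ge c_\e^*$, you take a genuinely different route. The paper invokes a topological claim—that $\N_\e$ separates $\h$ into two components, with the origin's component a small ball where $J_\e\ge 0$—and justifies the sign of $J_\e$ there by showing $t\mapsto J_\e(tu)$ is nondecreasing on $(0,t_u]$, a computation that leans on the monotonicity hypothesis $(f_4)$ together with $\2>4$. You instead track the continuous scalar $h(t)=\langle J_\e'(\gamma(t)),\gamma(t)\rangle$ along an arbitrary admissible path, observe $h\ge 0$ when $\|\gamma(t)\|_\e$ is small (the same $(f_1)$--$(f_2)$ estimates as in Lemma~\ref{MPG}(ii)), and pass from $J_\e(\gamma(1))<0$ to $h(1)<0$ via the Ambrosetti--Rabinowitz-type inequality $J_\e(u)\ge\tfrac{1}{\theta}\langle J_\e'(u),u\rangle$, whose four summands are nonnegative thanks to $4<\theta<q<\2$ and $(f_3)$. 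Then IVT produces $t^*$ with $h(t^*)=0$ and $\gamma(t^*)\neq 0$, i.e.\ a crossing of $\N_\e$. Your approach trades $(f_4)$ for $(f_3)$ and, perhaps more importantly, replaces the unproved separation statement by an entirely explicit IVT argument, making the crossing of $\N_\e$ self-contained; the paper's fiberwise computation is slicker once Lemma~\ref{lem2.2HZ} is available but less transparent about why every admissible path must hit the Nehari set.
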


\begin{proof}
From Lemma \ref{lem2.2HZ} it follows that $c_{\e}^{*}=c_{\e}^{**}$. Let us observe that for any $u\in \h\setminus \{0\}$ there exists some $t_{0}>0$ large enough such that $J_{\e}(t_{0}u)<0$. Let us define a path $\gamma: [0, 1]\rightarrow \h$ by setting $\gamma(t):= t\, t_{0}u$. It is clear that $\gamma \in \Gamma_{\e}$ and consequently $c_{\e}\leq c_{\e}^{**}$. 
In what follows, we prove that $c_{\e}\geq c_{\e}^{*}$. Note that the manifold $\N_{\e}$ separates $\h$ into two components. By $(f_{1})$ and $(f_{2})$, the component containing the origin also contains a small ball around the origin. Moreover, $J_{\e}(u)\geq 0$ for all $u$ in this component. Indeed, for all $t\in (0, t_{u}]$ we have 
\begin{align*}
h'(t)&= \langle J'_{\e}(tu), u\rangle \\
&= t^{3} \left[ b[u]^{4}_{A_{\e}} - \left(- \frac{\|u\|^{2}_{\e}}{t^{2}} + \int_{\R^{3}} \frac{f(|tu|^{2})}{|tu|^{2}} |u|^{4}\, dx + t^{\2 -4} |u|_{\2}^{\2} \right)\right] \\
&\geq \frac{t^{3}}{t_{u}^{4}} \langle J'_{\e}(t_{u}u), t_{u}u\rangle =0. 
\end{align*}
Thus, every path $\gamma \in \Gamma_{\e}$ has to cross $\N_{\e}$ and $c_{\e}^{*}\leq c_{\e}$. This ends the proof of lemma. 
\end{proof}

\begin{lem}\label{lem2.3HZ}
Let $c\in \R$. Then, any $(PS)_{c}$ sequence of $J_{\e}$ is bounded in $\h$.
\end{lem}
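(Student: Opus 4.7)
The plan is to follow the standard Ambrosetti-Rabinowitz trick: combine $J_\e(u_n)$ linearly with $\langle J_\e'(u_n),u_n\rangle$, so that the nonlinear terms are controlled by $(f_3)$ while the Kirchhoff and critical terms are absorbed with favorable sign. The exponent $\theta \in (4,q)$ is the natural choice because $\theta > 4$ is dictated by the quartic Kirchhoff term and $\theta < q < 2^{*}_{s}$ by the critical term.

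Concretely, let $(u_n) \subset H^s_\e$ be a $(PS)_c$ sequence, so $J_\e(u_n) \to c$ and $\|J_\e'(u_n)\|_{(H^s_\e)^*} \to 0$. I would compute
\begin{align*}
J_\e(u_n) - \frac{1}{\theta}\langle J_\e'(u_n), u_n\rangle
&= \Bigl(\tfrac{1}{2}-\tfrac{1}{\theta}\Bigr)\|u_n\|_\e^{2} + \Bigl(\tfrac{1}{4}-\tfrac{1}{\theta}\Bigr) b\,[u_n]_{A_\e}^{4} \\
&\quad + \int_{\R^3}\Bigl(\tfrac{1}{\theta} f(|u_n|^2)|u_n|^2 - \tfrac{1}{2}F(|u_n|^2)\Bigr)\,dx
+ \Bigl(\tfrac{1}{\theta}-\tfrac{1}{\2}\Bigr)|u_n|_{\2}^{\2}.
\end{align*}
By $(f_3)$, the integrand is pointwise non-negative (since $\tfrac{\theta}{2}F(t)\le tf(t)$ for $t>0$, equivalently $\tfrac{1}{\theta}tf(t)-\tfrac{1}{2}F(t)\ge 0$). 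Because $\theta\in(4,q)$ and $q<\2$, the four coefficients $\tfrac{1}{2}-\tfrac{1}{\theta}$, $\tfrac{1}{4}-\tfrac{1}{\theta}$, $1$, $\tfrac{1}{\theta}-\tfrac{1}{\2}$ are all strictly positive. Discarding the last three non-negative contributions yields
$$
J_\e(u_n) - \frac{1}{\theta}\langle J_\e'(u_n), u_n\rangle \;\ge\; \Bigl(\tfrac{1}{2}-\tfrac{1}{\theta}\Bigr)\|u_n\|_\e^{2}.
$$

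For the upper bound I would use $J_\e(u_n) = c + o(1)$ and $|\langle J_\e'(u_n), u_n\rangle| \le \|J_\e'(u_n)\|_{(H^s_\e)^*}\|u_n\|_\e = o(1)\|u_n\|_\e$, so
$$
\Bigl(\tfrac{1}{2}-\tfrac{1}{\theta}\Bigr)\|u_n\|_\e^{2} \;\le\; c + o(1) + \tfrac{1}{\theta} o(1)\|u_n\|_\e.
$$
This quadratic inequality in $\|u_n\|_\e$ forces $(\|u_n\|_\e)$ to be bounded, proving the lemma.

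No genuinely hard step arises here; the only mild subtlety is making sure that both the Kirchhoff exponent $4$ and the critical exponent $\2$ can be simultaneously handled, which is precisely why $(f_3)$ has been formulated with $\theta\in(4,q)\subset(4,\2)$ (and why the restriction $s>3/4$ in $(f_2)$ is needed to guarantee $4<\2$). The diamagnetic inequality is not invoked in this step; the argument is purely algebraic from the Ambrosetti--Rabinowitz-type condition and the structure of $J_\e$.
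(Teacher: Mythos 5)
Your proof is correct and coincides with the paper's own argument: both evaluate $J_{\e}(u_{n})-\frac{1}{\theta}\langle J_{\e}'(u_{n}),u_{n}\rangle$, use $(f_3)$ together with $\theta\in(4,q)$ and $q<\2$ to discard the Kirchhoff, nonlinear and critical terms with positive coefficients, and conclude from $\bigl(\tfrac12-\tfrac1\theta\bigr)\|u_{n}\|_{\e}^{2}\leq c+o_{n}(1)+o_{n}(1)\|u_{n}\|_{\e}$. No discrepancies to report.
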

\begin{proof}
Let $(u_{n})\subset \h$ be a $(PS)_{c}$ sequence of $J_{\e}$, that is 
\begin{align*}
c+o_{n}(1)=\frac{1}{2}\|u_{n}\|_{\e}^{2}+\frac{b}{4}[u_{n}]^{4}_{A_{\e}}-\frac{1}{2}\int_{\R^{3}} F(|u_{n}|^{2}) dx-\frac{1}{\2}|u_{n}|_{\2}^{\2}
\end{align*}
and
\begin{align*}
o_{n}(1)=\|u_{n}\|_{\e}^{2}+b[u_{n}]^{4}_{A_{\e}}-\int_{\R^{3}} f(|u_{n}|^{2})|u_{n}|^{2} dx-|u_{n}|_{\2}^{\2}.
\end{align*}
Combining the above equalities and using $(f_3)$ we get
\begin{align*}
c+o_{n}(1)\|u_{n}\|_{\e}&= J_{\e}(u_{n})-\frac{1}{\theta}\langle J'_{\e}(u_{n}), u_{n}\rangle \\
&= \left(\frac{1}{2}-\frac{1}{\theta}\right)\|u_{n}\|^{2}_{\e}+\left( \frac{1}{4}-\frac{1}{\theta}\right)b[u_{n}]_{A_{\e}}^{4}\\
&+\frac{1}{\theta}\int_{\R^{3}} \left[f(|u_{n}|^{2})|u_{n}|^{2}-\frac{\theta}{2} F(|u_{n}|^{2})\right]\, dx\\
&+\left(\frac{1}{\theta}-\frac{1}{\2} \right) |u_{n}|_{\2}^{\2} \\
&\geq \left(\frac{1}{2}-\frac{1}{\theta}\right)\|u_{n}\|^{2}_{\e},
\end{align*}
which implies that $(u_{n})$ is bounded in $\h$.
\end{proof}

In order to obtain some compactness property for $J_{\e}$, it will be fundamental to compare $c_{\e}$ with the minimax level of the following family of autonomous problems related to \eqref{Pe}, that is for all $\mu>0$
\begin{equation}\label{AP0}
(a+b[u]^{2})(-\Delta)^{s} u + \mu u=  f(u^{2})u+|u|^{\2-2}u \quad \mbox{ in } \R^{3}.
\end{equation}
We denote by $E_{\mu}: H^{s}_{\mu}\rightarrow \R$ the corresponding energy functional defined as
\begin{align*}
E_{\mu}(u)
=\frac{1}{2}\|u\|^{2}_{\mu}+\frac{b}{4}[u]^{4}-\frac{1}{2}\int_{\R^{3}} F(u^{2})\, dx-\frac{1}{\2}|u|_{\2}^{\2},
\end{align*}
where $H^{s}_{\mu}$ stands for the fractional Sobolev space $H^{s}(\R^{3}, \R)$ endowed with the norm 
$$\|u\|_{\mu}^{2}:=a[u]^{2}+\mu |u|^{2}_{2}.$$ 
As before, we consider the Nehari manifold associated with (\ref{AP0}), that is
\begin{equation*}
\mathcal{M}_{\mu}:= \{u\in H^{s}_{\mu} \setminus \{0\} : \langle E_{\mu}'(u), u \rangle =0\},
\end{equation*}
and define $m_{\mu}:=\inf_{u\in \M_{\mu}} E_{\mu}(u)$.
From the above arguments, we can see that the number $m_{\mu}$ and the manifold $\M_{\mu}$ have properties similar to those of $c_{\e}$ and $\N_{\e}$. In particular 
\begin{align}\label{CMU}
m_{\mu}= \inf_{u\in H^{s}_{\mu}\setminus\{0\}} \max_{t\geq 0} E_{\mu}(tu)=\inf_{\gamma\in \Gamma_{\mu}} \max_{t\in [0, 1]} E_{\mu}(\gamma(t)),
\end{align}
where $\Gamma_{\mu}:=\{\gamma \in C([0, 1], H^{s}_{\mu}) : \gamma(0)=0, E_{\mu}(\gamma(1))<0\}$.

Now we compare the minimax level $m_{\mu}$ with a suitable constant which involves the best constant $S_{*}$ of the embedding $D^{s,2}(\R^{3}, \R)$ into $L^{\2}(\R^{3}, \R)$ (see \cite{CT}).
\begin{lem}\label{lem2.6HZ}
Let $\mu>0$. Then there exists $T>0$ such that
$$
m_{\mu}<\frac{a}{2}S_{*}T^{3-2s}+\frac{b}{4}S_{*}^{2}T^{6-4s}-\frac{1}{2^{*}_{s}}T^{3}=:c_{*}.
$$
\end{lem}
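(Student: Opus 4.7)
The plan is to exploit the minimax representation~\eqref{CMU}, namely
$m_{\mu}=\inf_{u\in H^{s}_{\mu}\setminus\{0\}}\max_{t\geq 0}E_{\mu}(tu)$,
and to exhibit a single nontrivial test function $u_{\varepsilon}\in H^{s}(\R^{3},\R)$ together with a $T>0$ such that $\max_{t\geq 0}E_{\mu}(tu_{\varepsilon})<c_{*}$. In view of the fractional critical term the natural candidate is a concentrated, truncated extremal of $S_{*}$: take $U(x)=\kappa(1+|x|^{2})^{-(3-2s)/2}$ normalized so that $[U]^{2}=S_{*}$ and $|U|_{\2}^{\2}=1$, fix a cutoff $\eta\in C^{\infty}_{c}(\R^{3})$ with $\eta\equiv 1$ near the origin, and set $u_{\varepsilon}(x):=\eta(x)\,\varepsilon^{-(3-2s)/2}U(x/\varepsilon)$.

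The classical Brezis--Nirenberg-type computations adapted to the fractional setting (see e.g.~\cite{FV, MBRS}) then give, as $\varepsilon\to 0^{+}$,
$$
[u_{\varepsilon}]^{2}=S_{*}+O(\varepsilon^{3-2s}), \qquad |u_{\varepsilon}|_{\2}^{\2}=1+O(\varepsilon^{3}), \qquad |u_{\varepsilon}|_{2}^{2}\sim \varepsilon^{3-2s}, \qquad |u_{\varepsilon}|_{\sigma}^{\sigma}\sim \varepsilon^{3-\sigma(3-2s)/2},
$$
the last two relying on the polynomial decay $U(y)\sim |y|^{-(3-2s)}$ at infinity and the range $s>3/4$. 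Using the lower bound $F(t^{2})\geq (2\lambda/\sigma)|t|^{\sigma}$ coming from $(f_{2})$, for every $t\geq 0$ we obtain
$$
E_{\mu}(tu_{\varepsilon})\leq \tfrac{at^{2}}{2}[u_{\varepsilon}]^{2}+\tfrac{\mu t^{2}}{2}|u_{\varepsilon}|_{2}^{2}+\tfrac{bt^{4}}{4}[u_{\varepsilon}]^{4}-\tfrac{\lambda t^{\sigma}}{\sigma}|u_{\varepsilon}|_{\sigma}^{\sigma}-\tfrac{t^{\2}}{\2}|u_{\varepsilon}|_{\2}^{\2}.
$$
Plugging in the asymptotics and setting $t^{2}=T^{3-2s}$, the three purely critical-type contributions reproduce $g(T):=\tfrac{a}{2}S_{*}T^{3-2s}+\tfrac{b}{4}S_{*}^{2}T^{6-4s}-\tfrac{1}{\2}T^{3}$ up to an $O(\varepsilon^{3-2s})$ remainder (the Kirchhoff quartic term is handled via $(S_{*}+O(\varepsilon^{3-2s}))^{2}=S_{*}^{2}+O(\varepsilon^{3-2s})$), whereas the subcritical term contributes $-C\,\varepsilon^{3-\sigma(3-2s)/2}$. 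Since $s>3/4$, the function $g$ attains a strictly positive maximum at a unique $T^{*}>0$; letting $T_{\varepsilon}$ be the argmax of $t\mapsto E_{\mu}(tu_{\varepsilon})$, a continuity argument gives $T_{\varepsilon}\to T^{*}$ and
$$
\max_{t\geq 0}E_{\mu}(tu_{\varepsilon})\leq g(T^{*})+O(\varepsilon^{3-2s})-C\,\varepsilon^{3-\sigma(3-2s)/2}.
$$

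The decisive step, and in my view the main obstacle, is ensuring that the negative subcritical correction strictly beats the positive critical remainder; this reduces to the exponent inequality $3-\sigma(3-2s)/2<3-2s$, i.e.\ $\sigma>\tfrac{4s}{3-2s}$. An elementary algebraic check shows that $\tfrac{4s}{3-2s}<4$ precisely when $s<1$, so the hypothesis $\sigma\in(4,\2)$ in $(f_{2})$ automatically secures the desired inequality, which is exactly why the authors impose both $s>3/4$ and $\sigma>4$. Choosing $\varepsilon>0$ sufficiently small and setting $T:=T_{\varepsilon}$, we conclude $m_{\mu}\leq \max_{t\geq 0}E_{\mu}(tu_{\varepsilon})<g(T)=c_{*}$, as required.
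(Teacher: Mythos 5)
Your proposal is correct and takes the same basic route as the paper: truncate and rescale an Aubin--Talenti extremizer, estimate the four relevant norms, and use the subcritical term to beat the critical remainder. There are two small differences worth noting. First, you use the multiplication path $t\mapsto tu_\e$ (with the change of variable $t^2=T^{3-2s}$), while the paper uses the dilation path $t\mapsto v_\e(\cdot/t)$, for which $[\gamma_\e(t)]^2=t^{3-2s}[v_\e]^2$ and all $L^r$ norms scale as $t^3$; both reproduce the same function $K(T)=\tfrac{a}{2}S_*T^{3-2s}+\tfrac{b}{4}S_*^2T^{6-4s}-\tfrac{1}{\2}T^3$. Second, you correctly observe that $\sigma>4$ together with $s<1$ forces $\sigma>\tfrac{4s}{3-2s}$ automatically, so the exponent comparison $3-\tfrac{(3-2s)\sigma}{2}<3-2s$ holds without any restriction on $\lambda$. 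This is a genuine simplification: the paper's proof carries along a second case $2<\sigma\leq\tfrac{4s}{3-2s}$ requiring $\lambda$ large, but under hypothesis $(f_2)$ (which requires $\sigma>4$) and $s\in(\tfrac34,1)$ that case is vacuous, since $\tfrac{4s}{3-2s}\in(2,4)$ there.

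One small slip at the end: you should set $T:=T^*$, the unique maximizer of $g$ (equivalently of $K$), not $T:=T_\e$. The value $c_*$ appearing in the statement is fixed once and for all as $g(T^*)=K(T^*)$; your own estimate $\max_{t\geq 0}E_\mu(tu_\e)\leq g(T^*)+O(\e^{3-2s})-C\e^{3-\sigma(3-2s)/2}<g(T^*)$ already gives exactly what is required. (If you truly set $T:=T_\e$ then $c_*$ becomes an $\e$-dependent quantity, which is not what the lemma asserts and would make the remainder of the paper's compactness argument, which compares levels against the fixed number $c_*$, ill-posed.) Also, to be fully rigorous you would want to verify, as the paper does, that the maximizer $t_\e$ of $t\mapsto E_\mu(tu_\e)$ stays in a compact subinterval $[t_1,t_2]\subset(0,\infty)$ uniformly in $\e$ small, so that the $O(\e^{3-2s})$ bookkeeping is uniform; your ``continuity argument'' gestures at this but it is the one point you cannot skip.
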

\begin{proof}
We follow some ideas used in \cite{LSZ}. Let $\eta\in C^{\infty}_{c}(\R^{3}, \R)$ be a cut-off function such that $\eta=1$ in $B_{1}(0)$, $\supp(\eta)\subset B_{2}(0)$ and $0\leq \eta\leq 1$.  We know that $S_{*}$ is achieved by $U(x)=\kappa(\nu^{2}+|x-x_{0}|^{2})^{-\frac{3-2s}{2}}$, with $\kappa\in \R$, $\nu>0$ and $x_{0}\in \R^{3}$ (see \cite{CT}). Taking $x_{0}=0$, as in \cite{SV2}, we can define $v_{\e}(x):=\eta(x)u_{\e}(x)$, $u_{\e}(x):=\e^{-\frac{3-2s}{2}}u^{*}(x/\e)$, $u^{*}(x):=\frac{U(x/S_{*}^{\frac{1}{2s}})}{|U|_{2^{*}_{s}}}$.
Then $(-\Delta)^{s}u_{\e}=|u|^{2^{*}_{s}-2}u$ in $\R^{3}$ and $[u_{\e}]^{2}=|u_{\e}|^{2^{*}_{s}}_{2^{*}_{s}}=S_{*}^{\frac{3}{2s}}$. We recall the following well-known estimates (see \cite{MBRS, SV2}):
\begin{align}
&A_{\e}:=[v_{\e}]^{2}=S^{\frac{3}{2s}}_{*}+O(\e^{3-2s}) \label{sq1}\\
&B_{\e}:=|v_{\e}|_{2}^{2}=O(\e^{3-2s}) \label{sq2}\\
&C_{\e}:=|v_{\e}|_{\sigma}^{\sigma} \geq 
\left\{
\begin{array}{ll}
O(\e^{3-\frac{(3-2s)\sigma}{2}}) &\mbox{ if } \sigma>\frac{3}{3-2s}\\
O(\log (\frac{1}{\e})\e^{3-\frac{(3-2s)\sigma}{2}}) &\mbox{ if } \sigma=\frac{3}{3-2s} \\
O(\e^{\frac{(3-2s)\sigma}{2}}) &\mbox{ if } \sigma<\frac{3}{3-2s} 
\end{array}
\right.\label{sq3}\\
&D_{\e}:=|v_{\e}|_{2^{*}_{s}}^{2^{*}_{s}}=S^{\frac{3}{2s}}_{*}+O(\e^{3}). \label{sq4}
\end{align}
Let us note that for all $\e>0$ there exists $t_{0}>0$ such that $E_{\mu}(\gamma_{\e}(t_{0}))<0$, where $\gamma_{\e}(t)=v_{\e}(\cdot/t)$.
Indeed, by $(f_2)$, we have
\begin{align}\label{sq5}
E_{\mu}(\gamma_{\e}(t))&\leq \frac{a}{2}t^{3-3s}[v_{\e}]^{2}+\frac{\mu}{2} t^{3} |v_{\e}|_{2}^{2}+\frac{b}{4}t^{6-4s}[v_{\e}]^{4}-\frac{t^{3}}{2^{*}_{s}} |v_{\e}|^{2^{*}_{s}}_{2^{*}_{s}}-\lambda\frac{t^{3}}{\sigma}|v_{\e}|_{\sigma}^{\sigma} \nonumber \\
&=\frac{a}{2}t^{3-2s}A_{\e}+\frac{b}{4}A_{\e}^{2}t^{6-4s}+\left(\mu\frac{B_{\e}}{2}-\frac{D_{\e}}{2^{*}_{s}}-\frac{\lambda C_{\e}}{\sigma}\right) t^{3}.
\end{align}
Since $0<6-4s<3$, we can use \eqref{sq2} to deduce that
$$
\mu\frac{B_{\e}}{2}-\frac{D_{\e}}{2^{*}_{s}}\rightarrow -\frac{1}{2^{*}_{s}} S_{*}^{\frac{3}{2s}}
$$
as $\e\rightarrow 0$. Hence, in view of \eqref{sq1}, we can see that for all $\e>0$ sufficiently small, $E_{\mu}(\gamma_{\e}(t))\rightarrow -\infty$ as $t\rightarrow \infty$, and then there exists $t_{0}>0$ such that $E_{\mu}(\gamma_{\e}(t_{0}))<0$. \\
Note that, as $t\rightarrow 0^{+}$ we have 
\begin{align*}
[\gamma_{\e}(t)]^{2} + |\gamma_{\e}(t)|_{2}^{2}  = t^{3-2s} A_{\e} + t^{3}B_{\e}\rightarrow 0 \mbox{ uniformly for } \e>0 \mbox{ small. }
\end{align*}
We set $\gamma_{\e}(0)=0$. Then, $\gamma_{\e}(t_{0}\cdot) \in \Gamma_{\mu}$
and using \eqref{CMU} we can see that $m_{\mu}\leq \sup_{t\geq 0} E_{\mu}(\gamma_{\e}(t))$. \\
Taking into account that $m_{\mu}>0$, by \eqref{sq5}, there exists $t_{\e}>0$ such that 
\begin{align*}
\sup_{t\geq 0} E_{\mu}(\gamma_{\e}(t))= E_{\mu}(\gamma_{\e}(t_{\e})). 
\end{align*}
In the light of \eqref{sq1}, \eqref{sq3} and \eqref{sq5} we deduce that $E_{\mu}(\gamma_{\e}(t)) \rightarrow 0^{+}$ as $t\rightarrow 0^{+}$ and $E_{\mu}(\gamma_{\e}(t))\rightarrow -\infty$ as $t\rightarrow \infty$ uniformly for $\e>0$ small. 
Then we can find $t_{1}, t_{2}$ (independent of $\e>0$) verifying $0<t_{1}\leq t_{\e}\leq t_{2}$. \\
Set 
\begin{align*}
H_{\e}(t):= \frac{aA_{\e}}{2} t^{3-2s} + \frac{bA_{\e}^{2}}{4}t^{6-4s} -\frac{D_{\e}}{2^{*}_{s}} t^{3}. 
\end{align*}
Then we have 
\begin{align*}
m_{\mu}\leq \sup_{t\geq 0} H_{\e}(t) + \left(\frac{\mu B_{\e}}{2} -\frac{\lambda C_{\e}}{\sigma}\right) t_{\e}^{3}. 
\end{align*}
Using \eqref{sq3}, for any $\sigma\in (2, 2^{*}_{s})$, we have that $C_{\e}\geq O(\e^{3-\frac{(3-2s)\sigma}{2}})$ and exploiting \eqref{sq2} we can infer that
\begin{align*}
m_{\mu}\leq \sup_{t\geq0} H_{\e}(t)  + O(\e^{3-2s}) - O(\lambda \e^{3-\frac{(3-2s)\sigma}{2}}). 
\end{align*}
Since $3-2s>0$ and $3-\frac{(3-2s)\sigma}{2}>0$, we get 
\begin{align*}
\sup_{t\geq0} H_{\e}(t)  \geq \frac{m_{\mu}}{2} \quad \mbox{ uniformly for } \e>0 \mbox{ small. }
\end{align*}
Arguing as before there exist $t_{3}, t_{4}>0$ independent of $\e>0$ such that 
\begin{align*}
\sup_{t\geq0} H_{\e}(t) = \sup_{t\in [t_{3}, t_{4}]} H_{\e}(t). 
\end{align*}
By \eqref{sq1} we deduce 
\begin{align}\label{sq6}
m_{\mu}\leq \sup_{t\geq0} K(S_{*}^{\frac{1}{2s}} t) +O(\e^{3-2s}) - O(\lambda \e^{3-\frac{(3-2s)\sigma}{2}}), 
\end{align}
where 
\begin{align}\label{defK}
K(t):= \frac{aS_{*}}{2} t^{3-2s} +\frac{bS_{*}^{2}}{4} t^{6-4s} -\frac{1}{2^{*}_{s}} t^{3}. 
\end{align}
Let us note that 
\begin{align*}
K'(t)&= \frac{3-2s}{2} aS_{*} t^{2-2s} + \frac{3-2s}{2} bS_{*}^{2} t^{5-4s} - \frac{3-2s}{2} t^{2} \\
&= \frac{(3-2s)t^{2-2s}}{2} \left( aS_{*} + bS_{*}^{2} t^{3-2s} -t^{2s}\right) =: \frac{(3-2s)t^{2-2s}}{2} \tilde{K}(t). 
\end{align*}
Moreover, 
\begin{align*}
\tilde{K}'(t)= bS_{s} (3-2s)t^{2-2s} - 2s t^{2s-1}= t^{2-2s} [bS_{*}^{2}(3-2s) - 2s t^{4s-3}]. 
\end{align*}
Since $4s>3$, there exists a unique $T>0$ such that $\tilde{K}(t)>0$ for $t\in (0, T)$ and $\tilde{K}(t)<0$ for $t>T$. Thus, $T$ is the unique maximum point of $K$. In virtue of \eqref{sq6} we have 
\begin{align}\label{sq7}
m_{\mu}\leq K(T)+ O(\e^{3-2s}) - O(\lambda \e^{3-\frac{(3-2s)\sigma}{2}}). 
\end{align}
If $\sigma>\frac{4s}{3-2s}$, then $0<3-\frac{(3-2s)\sigma}{2}<3-2s$, and by \eqref{sq7}, for any fixed $\lambda>0$, we obtain $m_{\mu}<K(T)$ for $\e>0$ sufficiently small. \\
If $2<\sigma\leq\frac{4s}{3-2s}$, then, for $\e>0$ small and $\lambda>\e^{\frac{(3-2s)\sigma}{2}-2s-1}$, we also have $m_{\mu}<K(T)$. This ends the proof of lemma.
\end{proof}

\begin{remark}
Let us note that in the case $V_{\infty}<\infty$, we can deduce that $m_{V_{\infty}}<c_{*}$.
\end{remark}

\noindent
Next, we establish an existence result for autonomous problem \eqref{AP0}. More precisely, we have:
\begin{lem}\label{FS}
For all $\mu>0$, there exists a positive ground state solution of \eqref{AP0}.
\end{lem}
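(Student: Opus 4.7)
The plan is to combine a mountain pass construction with a Lions-type concentration-compactness argument, using the strict bound $m_\mu<c_*$ of Lemma~\ref{lem2.6HZ} as the decisive ingredient that rules out the loss of compactness coming from the critical Sobolev embedding and the nonlocal Kirchhoff coupling. Since $E_\mu$ satisfies the mountain pass geometry of Lemma~\ref{MPG} (the estimates transfer verbatim, with the constant $\mu$ replacing the potential $V$), the mountain pass theorem without the $(PS)$ condition produces a $(PS)_{m_\mu}$ sequence $(u_n)\subset H^s_\mu$; its boundedness follows exactly as in Lemma~\ref{lem2.3HZ} using $(f_3)$, so up to a subsequence $u_n\rightharpoonup u$ in $H^s_\mu$ and a.e.\ in $\R^3$.

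Next I would rule out vanishing in the sense of Lemma~\ref{Lions}. If $\sup_{y\in\R^3}\int_{B_R(y)} u_n^2\,dx\to 0$ for all $R>0$, then $u_n\to 0$ in $L^q(\R^3,\R)$ for every $q\in(2,2^*_s)$, so by $(f_1)$--$(f_2)$ the subcritical parts of both $E_\mu(u_n)$ and $\langle E_\mu'(u_n),u_n\rangle$ vanish in the limit. Writing $T_1=\lim[u_n]^2$, $T_2=\lim|u_n|_2^2$, $T_3=\lim|u_n|_{2^*_s}^{2^*_s}$ and combining the identities produced by these two limits with the Sobolev inequality $T_3\le S_*^{-2^*_s/2}T_1^{2^*_s/2}$, an elementary optimization against the function $K$ in \eqref{defK} yields $m_\mu\geq K(T)=c_*$, contradicting Lemma~\ref{lem2.6HZ}. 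Hence there exist $R,\delta>0$ and $y_n\in\R^3$ with $\int_{B_R(y_n)} u_n^2\,dx\geq\delta$, and by translation invariance $\tilde u_n:=u_n(\cdot+y_n)$ is a bounded $(PS)_{m_\mu}$ sequence whose weak limit $\tilde u$ is non-trivial.

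The third step is to identify $\tilde u$ as a weak solution of \eqref{AP0}; this is where the Kirchhoff coupling generates the main technical difficulty, since $[\tilde u_n]^2$ need not converge to $[\tilde u]^2$. Passing to a further subsequence with $[\tilde u_n]^2\to A$, taking the limit of $\langle E_\mu'(\tilde u_n),\varphi\rangle=o_n(1)$ for $\varphi\in C^\infty_c(\R^3,\R)$ shows that $\tilde u$ weakly solves the auxiliary equation
\[
(a+bA)(-\Delta)^s\tilde u+\mu\tilde u=f(\tilde u^2)\tilde u+|\tilde u|^{2^*_s-2}\tilde u.
\]
To promote $A$ to $[\tilde u]^2$ I would apply a Brezis--Lieb decomposition,
\[
[\tilde u_n]^2=[\tilde u]^2+[\tilde u_n-\tilde u]^2+o(1), \qquad |\tilde u_n|_{2^*_s}^{2^*_s}=|\tilde u|_{2^*_s}^{2^*_s}+|\tilde u_n-\tilde u|_{2^*_s}^{2^*_s}+o(1),
\]
combined with Lemma~\ref{Lions} applied to $\tilde u_n-\tilde u$: the subcritical terms vanish and the residue is driven by a purely critical Kirchhoff problem whose mountain pass level equals $c_*$. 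The strict inequality $m_\mu<c_*$ then forces $[\tilde u_n-\tilde u]^2\to 0$, so $A=[\tilde u]^2$ and $\tilde u$ solves \eqref{AP0}.

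It remains to show that $\tilde u$ is a ground state and can be chosen positive. The $\theta$-decomposition used in Lemma~\ref{lem2.3HZ} exhibits $E_\mu-\frac{1}{\theta}\langle E_\mu',\,\cdot\,\rangle$ as an integral of non-negative terms, so Fatou's lemma yields $E_\mu(\tilde u)\le m_\mu$, while $\tilde u\in\mathcal{M}_\mu$ gives the reverse inequality, hence $E_\mu(\tilde u)=m_\mu$. Replacing $\tilde u$ by $w:=t_{|\tilde u|}|\tilde u|$, with $t_{|\tilde u|}>0$ provided by the analogue of Lemma~\ref{lem2.2HZ}, decreases the Gagliardo seminorm while preserving every other term in $E_\mu$, so $w$ is a non-negative ground state solution of \eqref{AP0}, and the strong maximum principle for $(-\Delta)^s$ finally yields $w>0$. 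The hardest point is thus the Brezis--Lieb/Lions analysis of Step~3, where the sharp upper bound of Lemma~\ref{lem2.6HZ} is exactly what is needed to close the critical Kirchhoff tail.
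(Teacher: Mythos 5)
Your mountain-pass and vanishing-exclusion steps follow the paper closely, and the Fatou argument identifying the energy level at the end is essentially the one used in the text. However, your Step~3 diverges substantially from the paper and contains a genuine gap. You apply Lemma~\ref{Lions} to $h_n:=\tilde u_n-\tilde u$ in order to kill the subcritical terms, but the hypothesis of that lemma -- $\sup_{y\in\R^3}\int_{B_R(y)}|h_n|^2\,dx\to 0$ -- is not automatic for the residue of a weakly convergent sequence: local compactness only gives $h_n\to 0$ in $L^2_{loc}$, and a second concentration profile escaping to infinity is not excluded. Ruling that out for a translation-invariant problem requires a dichotomy/profile-decomposition argument you do not supply; moreover, because $[\tilde u_n]^4=([\tilde u]^2+[h_n]^2+o(1))^2$ carries a nonnegative cross-term $2[\tilde u]^2[h_n]^2$, the Kirchhoff energy does not split additively, so even with Lions in hand the claim that the residue is governed by a ``purely critical Kirchhoff problem at level $c_*$'' needs more care than you give it.

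The paper's own proof avoids this entirely. Having $[\tilde u_n]^2\to B^2$ and the auxiliary limiting equation, it observes via Fatou that $B^2\geq[\tilde u]^2$; if the inequality were strict, testing the limit equation with $\tilde u$ gives $\langle E'_\mu(\tilde u),\tilde u\rangle<0$, so there is $t_0\in(0,1)$ with $t_0\tilde u\in\M_\mu$, and then the monotonicity of $t\mapsto\frac14 f(t)t-\frac12F(t)$ together with $t_0<1$, $s>3/4$ and one more application of Fatou yields $m_\mu\leq E_\mu(t_0\tilde u)-\frac14\langle E'_\mu(t_0\tilde u),t_0\tilde u\rangle<m_\mu$, a contradiction. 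This is strictly more elementary than a Brezis--Lieb/Lions analysis and never needs the level bound $m_\mu<c_*$ in the nontrivial-limit case (only when ruling out vanishing). Your positivity step, replacing $\tilde u$ by $t_{|\tilde u|}|\tilde u|$, is a legitimate alternative to the paper's $u^-$ test-function argument, though both you and the paper tacitly rely on $L^\infty$ and $C^{1,\gamma}$ regularity of the ground state before invoking the strong maximum principle. You should either replace Step~3 by the Fatou comparison above, or flesh out the residue analysis with an explicit dichotomy exclusion and a careful treatment of the Kirchhoff cross-term.
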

\begin{proof}
It is easy to check that $E_{\mu}$ has a mountain pass geometry, so, using a version of the mountain pass theorem without $(PS)$ condition (see \cite{W}), there exists a sequence $(u_{n})\subset H^{s}_{\mu}$ such that $E_{\mu}(u_{n})\rightarrow m_{\mu}$ and $E'_{\mu}(u_{n})\rightarrow 0$.  Thus, $(u_{n})$ is bounded in $H^{s}_{\mu}$ and we may assume that $u_{n}\rightharpoonup u$ in $H^{s}_{\mu}$ and $[u_{n}]^{2}\rightarrow B^{2}$. 
Suppose that $u\neq 0$. Since $\langle E'_{\mu}(u_{n}), \varphi\rangle=o_{n}(1)$ we can deduce that for all $\varphi\in C^{\infty}_{c}(\R^{3}, \R)$
\begin{align}\label{HZman}
 \int_{\R^{3}} a(-\Delta)^{\frac{s}{2}} u (-\Delta)^{\frac{s}{2}}\varphi+\mu u \varphi \, dx+& bB^{2} \left( \int_{\R^{3}} (-\Delta)^{\frac{s}{2}} u (-\Delta)^{\frac{s}{2}}\varphi \, dx\right) \nonumber \\
 &=\int_{\R^{3}} [f(u^{2})u+|u|^{\2-2}u]\varphi \, dx.
\end{align}
Let us note that $B^{2}\geq [u]^{2}$ by Fatou's Lemma. If by contradiction $B^{2}>[u]^{2}$, we may use \eqref{HZman} to deduce that $\langle E'_{\mu}(u), u\rangle<0$. Moreover, conditions $(f_1)$-$(f_2)$ imply that $\langle E'_{\mu}(t u), t u\rangle>0$ for small $t>0$. Then there exists $t_{0}\in (0, 1)$ such that $t_{0} u\in \M_{\mu}$ and $\langle E'_{\mu}(t_{0} u), t_{0} u\rangle=0$. Using Fatou's Lemma, $t_{0}\in (0, 1)$, $s\in (\frac{3}{4},1)$ and  $\frac{1}{4}f(t)t-\frac{1}{2}F(t)$ is increasing for $t>0$ (by $(f_3)$ and $(f_4)$) we get
\begin{align*}
m_{\mu}&\leq E_{\mu}(t_{0} u)-\frac{1}{4} \langle E'_{\mu}(t_{0} u), t_{0} u\rangle<\liminf_{n\rightarrow \infty} \left[E_{\mu}(u_{n})-\frac{1}{4} \langle E'_{\mu}(u_{n}), u_{n}\rangle\right]=c_{\mu}
\end{align*}
which gives a contradiction. Therefore $B^{2}= [u]^{2}$ and we deduce that $E'_{\mu}(u)=0$. Hence  $u\in \M_{\mu}$. In view of $\langle E'_{\mu}(u), u^{-}\rangle=0$ and $(f_1)$ we can see that $u\geq 0$ in $\R^{3}$. Moreover, we can argue as in Lemma $6.1$ in \cite{AI2} to infer that $u\in L^{\infty}(\R^{3}, \R)$. Since $u$ satisfies
$$
(-\Delta)^{s}u=(a+b[u]^{2})^{-1}[f(u^{2})u+|u|^{\2-2}u-\mu u]\in L^{\infty}(\R^{3}, \R),
$$
and $s>3/4$, we obtain $u\in C^{1, \gamma}(\R^{3}, \R)\cap L^{\infty}(\R^{3},\R)$, for some $\gamma<2s-1$ (see \cite{S}) and that $u>0$ by maximum principle. Now we prove that $E_{\mu}(u)=m_{\mu}$. Indeed, by $u\in \M_{\mu}$, $(f_3)$, $(f_4)$ and Fatou's Lemma we have
\begin{align*}
m_{\mu}&\leq E_{\mu}(u)-\frac{1}{4}\langle E'_{\mu}(u), u\rangle
\leq\liminf_{n\rightarrow \infty} E_{\mu}(u_{n})-\frac{1}{4}\langle E'_{\mu}(u_{n}), u_{n}\rangle 
=m_{\mu}.
\end{align*}
Now, we consider the case $u=0$. Since $m_{\mu}>0$ and $E_{\mu}$ is continuous, we can see that $\|u_{n}\|_{\mu}\nrightarrow 0$. 
Then, we claim to prove that there exists a sequence $(y_{n})\subset \R^{3}$ and constants $R, \beta>0$ such that
$$
\liminf_{n\rightarrow \infty} \int_{B_{R}(y_{n})}|u_{n}|^{2}dx\geq \beta>0.
$$
This can be done arguing as in Lemma \ref{PSc} below (see Step 1), so we omit the details.
Therefore, we can  define $v_{n}(x)=u_{n}(x+y_{n})$ such that $v_{n}\rightharpoonup v$ in $H^{s}_{\mu}$ for some $v\neq 0$. Proceeding as in the previous case we get the thesis.
\end{proof}

\noindent
Using similar arguments to those in Lemma \ref{FS}, we can prove the following fundamental compactness result for \eqref{AP0}. 
\begin{lem}\label{FSC}
Let $(u_{n})\subset \M_{\mu}$ be a sequence such that $E_{\mu}(u_{n})\rightarrow m_{\mu}$. Then, we have either
\begin{compactenum}[$(i)$]
\item $(u_{n})$ has a subsequence strongly convergent in $H^{s}_{\mu}$, or 
\item there exists $(\tilde{y}_{n})\subset \R^{3}$ such that the sequence $v_{n}=u_{n}(x+ \tilde{y}_{n})$ strongly converges in $H^{s}_{\mu}$. 
\end{compactenum}
\end{lem}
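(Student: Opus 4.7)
\smallskip
The plan is to adapt the argument of Lemma \ref{FS}, enriching it with a translation step to handle the vanishing of the weak limit. First, since $u_n\in\M_\mu$ forces $\langle E'_\mu(u_n),u_n\rangle=0$, the standard identity from Lemma \ref{lem2.3HZ}, namely
$$
E_\mu(u_n)-\frac{1}{\theta}\langle E'_\mu(u_n),u_n\rangle\geq \left(\frac{1}{2}-\frac{1}{\theta}\right)\|u_n\|_\mu^{2},
$$
together with $(f_3)$ and $E_\mu(u_n)\to m_\mu$, yields the boundedness of $(u_n)$ in $H^{s}_{\mu}$. Because $\M_\mu$ is a natural constraint for $E_\mu$ (a consequence of $(f_4)$ and the uniqueness of the Nehari fiber in Lemma \ref{lem2.2HZ}), Ekeland's variational principle applied to $E_\mu|_{\M_\mu}$ lets us assume, without loss of generality, that $(u_n)$ is also a $(PS)_{m_\mu}$ sequence for $E_\mu$. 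Passing to a subsequence, $u_n\rightharpoonup u$ in $H^{s}_{\mu}$, $u_n\to u$ a.e.\ and in $L^{r}_{\mathrm{loc}}(\R^{3},\R)$ for $r\in[1,2^{*}_{s})$, while $[u_n]^{2}\to B^{2}$.

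\smallskip
If $u\neq 0$, I would repeat the contradiction used in Lemma \ref{FS}: passing to the limit in $\langle E'_\mu(u_n),\varphi\rangle\to 0$ for $\varphi\in C^{\infty}_{c}(\R^{3},\R)$ shows that $u$ solves $(a+bB^{2})(-\Delta)^{s}u+\mu u=f(u^{2})u+|u|^{2^{*}_{s}-2}u$, so a strict inequality $B^{2}>[u]^{2}$ would give $\langle E'_\mu(u),u\rangle<0$. Since $\langle E'_\mu(tu),tu\rangle>0$ for small $t>0$ by $(f_1)$--$(f_2)$, there would exist $t_{0}\in(0,1)$ with $t_{0}u\in\M_\mu$. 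Applying the identity
$$
E_\mu(v)-\frac{1}{4}\langle E'_\mu(v),v\rangle=\frac{1}{4}\|v\|_\mu^{2}+\int_{\R^{3}}\left[\frac{1}{4}f(v^{2})v^{2}-\frac{1}{2}F(v^{2})\right]dx+\left(\frac{1}{4}-\frac{1}{2^{*}_{s}}\right)|v|^{2^{*}_{s}}_{2^{*}_{s}},
$$
whose three summands are nonnegative by $(f_3)$, $(f_4)$ and $s>3/4$, together with Fatou's lemma and the strict monotonicity of $t\mapsto \tfrac{1}{4}f(t)t-\tfrac{1}{2}F(t)$, would yield $m_\mu\leq E_\mu(t_{0}u)<\lim E_\mu(u_n)=m_\mu$, a contradiction. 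Hence $B^{2}=[u]^{2}$, so $u\in\M_\mu$ with $E_\mu(u)=m_\mu$. Applying the same identity to both $u_n$ and $u$ and invoking Fatou term by term forces $\|u_n\|_\mu\to\|u\|_\mu$, which combined with $u_n\rightharpoonup u$ in the Hilbert space $H^{s}_{\mu}$ gives $u_n\to u$ strongly, establishing alternative (i).

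\smallskip
If $u=0$, I would first exclude vanishing. Suppose $\sup_{y\in\R^{3}}\int_{B_{R}(y)}|u_n|^{2}\,dx\to 0$ for every $R>0$; then Lemma \ref{Lions} gives $u_n\to 0$ in $L^{r}(\R^{3},\R)$ for all $r\in(2,2^{*}_{s})$, so $(f_1)$--$(f_2)$ kill the subcritical contributions and the Nehari and energy identities reduce to
$$
\|u_n\|_\mu^{2}+b[u_n]^{4}=|u_n|^{2^{*}_{s}}_{2^{*}_{s}}+o_n(1),\qquad \frac{1}{2}\|u_n\|_\mu^{2}+\frac{b}{4}[u_n]^{4}-\frac{1}{2^{*}_{s}}|u_n|^{2^{*}_{s}}_{2^{*}_{s}}\to m_\mu.
$$
Combining these with $\|u_n\|_\mu^{2}\geq a[u_n]^{2}$ and the Sobolev inequality $S_{*}|u_n|^{2}_{2^{*}_{s}}\leq [u_n]^{2}$, an elementary optimization in the limit $k^{*}:=\lim[u_n]^{2}$ (where $k^{*}>0$ since $\|u_n\|_\mu\geq r$) forces $m_\mu\geq c_{*}$, contradicting Lemma \ref{lem2.6HZ}. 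Therefore there exist $(\tilde y_n)\subset\R^{3}$ and $R,\beta>0$ with $\liminf_{n\to\infty}\int_{B_{R}(\tilde y_n)}|u_n|^{2}\,dx\geq\beta>0$. Setting $v_n(x):=u_n(x+\tilde y_n)$, the translation invariance of $E_\mu$ gives $v_n\in\M_\mu$ with $E_\mu(v_n)\to m_\mu$, and up to a subsequence $v_n\rightharpoonup v\neq 0$ in $H^{s}_{\mu}$; applying the previous paragraph to $(v_n)$ yields strong convergence, establishing (ii).

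\smallskip
The main obstacle is the lack of weak continuity of the Kirchhoff coefficient $b[u_n]^{2}$, which in general only satisfies $[u]^{2}\leq B^{2}$: this is what forces the detour through the auxiliary quantity $B^{2}$ and the projection of $u$ onto $\M_\mu$ via some $t_{0}\in(0,1)$ to derive the contradiction when $u\neq 0$. A secondary difficulty, specific to the critical growth, is the exclusion of the vanishing scenario, which is resolved through the strict level inequality $m_\mu<c_{*}$ of Lemma \ref{lem2.6HZ}.
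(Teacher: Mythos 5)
Your proposal is correct and follows essentially the route the paper intends: the paper only remarks that Lemma \ref{FSC} is proved "using similar arguments to those in Lemma \ref{FS}", and your argument is exactly that adaptation — the same dichotomy between $u\neq 0$ and $u=0$, the same treatment of the Kirchhoff term via the auxiliary limit $B^{2}$, and the same exclusion of vanishing through the level comparison $m_{\mu}<c_{*}$ of Lemma \ref{lem2.6HZ} followed by translation. You also correctly supply the one step the paper leaves implicit, namely upgrading weak convergence to strong convergence via the term-by-term Fatou argument applied to $E_{\mu}(v)-\tfrac{1}{4}\langle E_{\mu}'(v),v\rangle$.
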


\section{The Palais-Smale condition}
In this section we prove that $J_{\e}$ verifies the Palais-Smale condition in a suitable sublevel, related to the ground energy at infinity. Due to the the presence of the critical Sobolev exponent, we will use the following well-known concentration-compactness principle due to Lions \cite{Lions}.
\begin{prop}\label{propL}
Let $(\rho_{n})$ be a sequence in $L^{1}(\R^{N})$ such that
$$
\rho_{n}\geq 0 \mbox{ in } \R^{N} \quad \mbox{ and } \quad \int_{\R^{N}} \rho_{n}(x)dx=\lambda,
$$
where $\lambda>0$ is fixed.
Then there exists a subsequence $(\rho_{n_{k}})$ satisfying one of the three following possibilities:
\begin{compactenum}[(i)]
\item (compactness) there exists $y_{k}\in \R^{N}$ such that $\rho_{n_{k}}(\cdot+y_{k})$ is tight, that is for any $\e>0$ there exists $R>0$ such that 
$$
\int_{y_{k}+B_{R}} \rho_{n_{k}}(x)dx\geq \lambda-\e;
$$
\item (vanishing) for any fixed $R>0$, there holds
$$
\lim_{k\rightarrow \infty} \sup_{y\in \R^{N}} \int_{y+B_{R}} \rho_{n_{k}}(x)dx=0;
$$
\item (dichotomy) there exists $\alpha\in (0, \ell)$ such that for all $\e>0$, there exists $k_{0}\geq 1$ and $\rho_{k}^{(1)}, \rho_{k}^{(2)}\in L^{1}(\R^{N})$, with $\rho_{k}^{(1)}, \rho_{k}^{(2)}\geq 0$, satisfying for $n\geq n_{0}$ 
\begin{align*}
|\rho_{n_{k}}-(\rho_{k}^{(1)}+\rho_{k}^{(2)})|_{1}<\e, \quad \left|\int_{\R^{N}} \rho^{(1)}_{k}(x) dx-\alpha \right|<\e, \quad \left|\int_{\R^{N}} \rho^{(2)}_{k}(x) dx-(\ell-\alpha) \right|<\e
\end{align*}
and 
$$
dist(supp(\rho^{(1)}_{k}), supp(\rho^{(2)}_{k}))\rightarrow \infty, \mbox{ as } k\rightarrow \infty. 
$$
\end{compactenum}
\end{prop}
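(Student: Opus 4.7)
The plan is to follow the classical Lévy concentration-function argument of Lions. First I would introduce, for each $n$, the concentration function
$$
Q_{n}(t) := \sup_{y \in \R^{N}} \int_{B_{t}(y)} \rho_{n}(x)\,dx, \qquad t \geq 0,
$$
which is nondecreasing on $[0,\infty)$, satisfies $0 \leq Q_{n}(t) \leq \lambda$, and $\lim_{t\to\infty}Q_{n}(t)=\lambda$. By a Helly-type selection argument applied to the uniformly bounded, monotone family $(Q_{n})$, I can pass to a subsequence (still denoted $Q_{n_{k}}$) converging pointwise on a dense countable subset of $[0,\infty)$ to a nondecreasing function $Q:[0,\infty)\to[0,\lambda]$. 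Then I set
$$
\alpha := \lim_{t\to\infty} Q(t) \in [0,\lambda],
$$
and the three conclusions of the proposition correspond to the three cases $\alpha=0$, $\alpha=\lambda$, and $\alpha\in(0,\lambda)$.

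The extremal cases are direct. If $\alpha=0$, then $Q_{n_{k}}(R)\to 0$ for every fixed $R>0$, which is precisely the vanishing alternative. If $\alpha=\lambda$, then for every $\e>0$ I fix $R_{\e}$ with $Q(R_{\e})>\lambda-\e/2$, so that for $k$ large there exists $y_{k}^{(\e)}\in\R^{N}$ satisfying $\int_{B_{R_{\e}}(y_{k}^{(\e)})}\rho_{n_{k}}>\lambda-\e$. A diagonal argument over a sequence $\e_{j}\downarrow 0$ yields a single sequence $(y_{k})$ for which the tightness statement holds; the centers for distinct $\e$'s must stay within a bounded distance of each other, since otherwise two disjoint balls could capture a total mass exceeding $\lambda$ for small $\e$.

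The dichotomy case, $0<\alpha<\lambda$, is the technical core of the argument. Given $\e>0$, I choose $R>0$ with $Q(R)>\alpha-\e/2$ and, for $k$ large, pick $y_{k}\in\R^{N}$ such that $\int_{B_{R}(y_{k})}\rho_{n_{k}}>\alpha-\e$. Because $Q_{n_{k}}(R')\to Q(R')\leq\alpha$ as $k\to\infty$ for each fixed $R'>0$, a diagonal extraction produces a sequence $R_{k}\to\infty$ with $Q_{n_{k}}(R_{k})<\alpha+\e$, and hence
$$
\int_{B_{R_{k}}(y_{k})\setminus B_{R}(y_{k})} \rho_{n_{k}}(x)\,dx < 2\e.
$$
Setting
$$
\rho_{k}^{(1)} := \rho_{n_{k}} \chi_{B_{R}(y_{k})}, \qquad \rho_{k}^{(2)} := \rho_{n_{k}} \chi_{\R^{N}\setminus B_{R_{k}}(y_{k})},
$$
I would then verify that the $L^{1}$-masses are $\alpha+O(\e)$ and $\lambda-\alpha+O(\e)$, that the supports are separated by distance $R_{k}-R\to\infty$, and that $|\rho_{n_{k}}-\rho_{k}^{(1)}-\rho_{k}^{(2)}|_{1}=O(\e)$. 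A further diagonal argument over a sequence $\e_{j}\downarrow 0$ delivers the statement as written.

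The main obstacle is precisely this dichotomy step: one must select the inner radius $R$, the outer radii $R_{k}$, and the centers $y_{k}$ in a compatible way so that the annular region between $B_{R}(y_{k})$ and $B_{R_{k}}(y_{k})$ carries arbitrarily small mass, while simultaneously ensuring $R_{k}\to\infty$. The delicate point is that $R_{k}$ must grow slowly enough that $Q_{n_{k}}(R_{k})$ stays close to $\alpha$, which is achievable only through the pointwise convergence $Q_{n_{k}}\to Q$ and the defining property $\lim_{t\to\infty}Q(t)=\alpha$; the remaining verifications are routine measure-theoretic manipulations.
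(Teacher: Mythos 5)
Your proposal correctly reconstructs the classical L\'evy concentration-function argument that Lions used to prove this dichotomy principle. The paper itself gives no proof of Proposition~\ref{propL}---it is stated as a known result, cited directly to \cite{Lions}---so there is no in-paper argument to compare against; your approach is precisely the canonical one from Lions's original work (introduce $Q_{n}(t)=\sup_{y}\int_{B_{t}(y)}\rho_{n}$, extract a pointwise limit $Q$ by Helly's selection theorem, set $\alpha=\lim_{t\to\infty}Q(t)$, and identify the three alternatives with $\alpha=0$, $\alpha=\lambda$, and $\alpha\in(0,\lambda)$). The case analysis is sound: the vanishing and compactness cases are handled as you describe, and the dichotomy construction via $\rho_{k}^{(1)}=\rho_{n_{k}}\chi_{B_{R}(y_{k})}$, $\rho_{k}^{(2)}=\rho_{n_{k}}\chi_{\R^{N}\setminus B_{R_{k}}(y_{k})}$ with $R$ fixed and $R_{k}\to\infty$ chosen diagonally so that $Q_{n_{k}}(R_{k})<\alpha+\e$ gives the annular mass bound $<2\e$, the mass splitting $\alpha\pm\e$ and $(\lambda-\alpha)\pm\e$, and the diverging support separation $R_{k}-R\to\infty$, exactly as required. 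The only thing worth noting is that the statement as printed in the paper has minor typos ($\ell$ for $\lambda$, $n_{0}$ for $k_{0}$), which your proof implicitly and correctly normalizes.
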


Now, we prove the following compactness property for the unconstrained functional.
\begin{lem}\label{PSc}
$J_{\e}$ satisfies the $(PS)_{c}$ conditions at any level $0<c<m_{V_{\infty}}$ if $V_{\infty}<\infty$, and at any level $0<c<c_{*}$ if $V_{\infty}=\infty$. 
\end{lem}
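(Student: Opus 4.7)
Given a Palais--Smale sequence $(u_n) \subset \h$ at level $c$, Lemma \ref{lem2.3HZ} yields boundedness in $\h$. So, up to a subsequence, $u_n \rightharpoonup u$ in $\h$, $|u_n| \to |u|$ a.e.\ in $\R^{3}$ and in $L^{r}_{loc}(\R^{3}, \R)$ for every $r \in [1, \2)$, and $[u_n]^{2}_{A_{\e}} \to B^{2}$ for some $B \ge 0$. Passing to the limit in $\langle J'_{\e}(u_n), \varphi \rangle = o_n(1)$ against $\varphi \in C^{\infty}_{c}(\R^{3}, \C)$ yields that $u$ weakly solves the magnetic Kirchhoff equation with modified coefficient $(a+bB^{2})$ in place of $(a + b[u]^{2}_{A_{\e}})$; testing this limiting equation against $u$ itself produces the corresponding integral identity.

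Next I would set $v_n := u_n - u$ and invoke Brezis--Lieb type splittings, adapted to the magnetic Gagliardo seminorm and to the critical Sobolev term:
\begin{equation*}
[u_n]^{2}_{A_{\e}} = [u]^{2}_{A_{\e}} + [v_n]^{2}_{A_{\e}} + o_n(1), \qquad |u_n|^{\2}_{\2} = |u|^{\2}_{\2} + |v_n|^{\2}_{\2} + o_n(1).
\end{equation*}
The subcritical piece $\int_{\R^{3}} F(|u_n|^{2})\,dx$ converges to its limit by $(f_1)$--$(f_2)$, local compactness, and a vanishing-type argument along the lines of Lemma \ref{Lions}. Combining these splittings with $\langle J'_{\e}(u_n), u_n\rangle \to 0$ and the identity satisfied by $u$ (which carries the coefficient $a+bB^{2}$), a short computation exploiting the quartic Kirchhoff term produces the key asymptotic identity
\begin{equation*}
(a+bB^{2})[v_n]^{2}_{A_{\e}} + \int_{\R^{3}} V_{\e}(x)|v_n|^{2}\,dx = |v_n|^{\2}_{\2} + o_n(1),
\end{equation*}
together with an analogous identity for the energy levels. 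Thus strong convergence reduces to showing that $|v_n|_{\2} \to 0$.

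The crux is then a concentration analysis. Via the diamagnetic inequality (Lemma \ref{DI}) the sequence $(|v_n|)$ is bounded in $H^{s}(\R^{3}, \R)$, and I would apply the Lions concentration--compactness machinery (Proposition \ref{propL} together with its critical-exponent variant) to the measures $|(-\Delta)^{s/2}|v_n||^{2}\,dx$ and $|v_n|^{\2}\,dx$. Concentration at a point is ruled out because each bubble would contribute at least
\begin{equation*}
\frac{a}{2}S_{*} T^{3-2s} + \frac{b}{4}S_{*}^{2} T^{6-4s} - \frac{1}{\2} T^{3} = c_{*},
\end{equation*}
with $T$ the maximizer of $K$ from \eqref{defK}, exactly as in the proof of Lemma \ref{lem2.6HZ}; this contradicts $c < c_{*}$, and also $c < m_{V_{\infty}} \le c_{*}$ when $V_{\infty} < \infty$. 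Mass escape to infinity is handled separately: when $V_{\infty} = \infty$, Theorem \ref{Sembedding} already provides compactness of $(|u_n|)$ in $L^{r}$ for $r \in [2, \2)$, so no tail can persist; when $V_{\infty} < \infty$, a radial cut-off of the form used in Lemma \ref{cutoff} shows that an escaping tail behaves asymptotically like a Palais--Smale sequence for the autonomous limit problem with constant potential $V_{\infty}$, so by Lemma \ref{FSC} and \eqref{CMU} any nontrivial tail carries energy at least $m_{V_{\infty}}$, contradicting $c < m_{V_{\infty}}$. Combining both exclusions with the key identity above gives $v_n \to 0$ strongly in $\h$.

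The main obstacle I foresee is the magnetic Brezis--Lieb splitting itself: the oscillating phase $e^{\imath(x-y)\cdot A_{\e}(\frac{x+y}{2})}$ obstructs a direct transfer of the classical real-variable argument, and one must exploit the H\"older continuity of $A$ together with the pointwise a.e.\ convergence $u_n \to u$ to pass to the limit in the interaction terms. A second delicate issue is correctly propagating the identification $B^{2} = [u]^{2}_{A_{\e}} + \lim [v_n]^{2}_{A_{\e}}$ through the nonlocal quartic Kirchhoff coupling $b[u_n]^{4}_{A_{\e}}$, so that the energy defect between $c$ and the energy of the weak limit $u$ is precisely the quantity to be compared with $c_{*}$ and $m_{V_{\infty}}$.
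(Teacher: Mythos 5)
Your overall strategy coincides with the paper's: boundedness via Lemma \ref{lem2.3HZ}, freezing the Kirchhoff coefficient at $a+bB^{2}$ (the paper's auxiliary functional $I_{\e}$ in \eqref{defIe}), a Brezis--Lieb splitting for $h_{n}=u_{n}-u$, exclusion of the concentration bubble through the threshold $c_{*}=K(T)$, and exclusion of escaping mass by comparison with $m_{V_{\infty}}$ via the diamagnetic inequality and Nehari projections. The implementation differs in one respect: the paper applies the \emph{first} concentration--compactness lemma (Proposition \ref{propL}) to a single nonnegative density $\rho_{n}$ built as the integrand of $J_{\e}(u_{n})-\frac14\langle J_{\e}'(u_{n}),u_{n}\rangle$ (nonnegative thanks to $(f_3)$ and $s>\frac34$), ruling out vanishing and dichotomy separately with an explicit cut-off decomposition $u_{n}=v_{n}+w_{n}$ and the estimates of Lemma \ref{cutoff}; you instead propose the Dirac-mass (second) variant applied to the measures attached to $|u_{n}-u|$. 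Both routes work, and your remark that the key difficulty is the magnetic interaction term is actually overcautious for the seminorm: $[\,\cdot\,]^{2}_{A_{\e}}$ is a Hilbertian quadratic form, so $[u_{n}]^{2}_{A_{\e}}=[u]^{2}_{A_{\e}}+[u_{n}-u]^{2}_{A_{\e}}+o_{n}(1)$ follows from weak convergence alone; no H\"older continuity of $A$ is needed there (it is needed elsewhere, in Theorem \ref{AMlem1}).

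One point you should repair is the order of the argument. Your ``key asymptotic identity'' for $v_{n}=u_{n}-u$ requires $\int_{\R^{3}}F(|u_{n}|^{2})\,dx\to\int_{\R^{3}}F(|u|^{2})\,dx$ and the analogous statement for $f(|u_{n}|^{2})|u_{n}|^{2}$, and these do \emph{not} follow from $(f_1)$--$(f_2)$ and local compactness alone: they require strong convergence $|u_{n}|\to|u|$ in $L^{r}(\R^{3},\R)$ for $r\in(2,\2)$, i.e.\ tightness of the sequence, which is precisely what the dichotomy/escape analysis establishes. As written, you derive the identity first and perform the concentration analysis afterwards, which is circular. The correct order (the paper's) is: first exclude vanishing and dichotomy for $\rho_{n}$ (or for your measures), deduce tightness and hence the $L^{r}$-convergence of $(|u_{n}|)$ for $r\in[2,\2)$, \emph{then} write the Brezis--Lieb identities for $h_{n}$, and finally exclude the surviving critical bubble by the $c<c_{*}$ comparison. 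With that reordering, and with the Nehari-projection details filled in for the dichotomy cases (in particular, the fact that the non-escaping piece still carries energy at least $c_{\e}>0$, so the total exceeds $m_{V_{\infty}}$), your argument matches the paper's proof.
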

\begin{proof}
Let $(u_{n})\subset \h$ be a $(PS)_{c}$ sequence of $J_{\e}$, that is 
\begin{equation}\label{3.3LG}
J_{\e}(u_{n})\rightarrow c \mbox{ and } J_{\e}'(u_{n})\rightarrow 0,
\end{equation}
as $n\rightarrow \infty$.
By Lemma \ref{lem2.3HZ} we know that $(u_{n})$ is bounded in $\h$. 
Then, we may assume that $u_{n}\rightharpoonup u$ in $\h$ and
\begin{equation}\label{B^2}
[u_{n}]^{2}_{A_{\e}}\rightarrow B^{2}.
\end{equation}
Firstly, we consider the case $V_{\infty}<\infty$.
Let us define
\begin{align*}
\rho_{n}(x)&:=\frac{a}{4}\int_{\R^{3}} \frac{|u_{n}(x)-u_{n}(y)e^{\imath (x-y)\cdot A_{\e}(\frac{x+y}{2})}|^{2}}{|x-y|^{3+2s}} dy+V_{\e}(x)|u_{n}(x)|^{2}\\
&+\frac{1}{4}\left[f(|u_{n}(x)|^{2})|u_{n}(x)|^{2}-2F(|u_{n}(x)|^{2})\right]+\frac{4s-3}{12}|u_{n}(x)|^{\2}.
\end{align*}
We note that $\rho_{n}\geq 0$ because of $(f_3)$ and $s>\frac{3}{4}$.
Moreover, $(\rho_{n})$ is bounded in $L^{1}(\R^{3})$, so we may assume, up to subsequence, that
$$
\Phi(u_{n}):=|\rho_{n}|_{1}\rightarrow \ell \quad \mbox{ as } n\rightarrow \infty.
$$
Clearly $\ell>0$, otherwise $J_{\e}(u_{n})\rightarrow 0$ which is impossible. Indeed, $\ell=c$ (since $c+o_{n}(1)=J_{\e}(u_{n})-\frac{1}{4}\langle J'_{\e}(u_{n}), u_{n}\rangle=\Phi(u_{n})$).
Now, we aim to apply Proposition \ref{propL} to $(\rho_{n})$. We will show that neither vanishing nor dichotomy occurs.

\medskip
{\bf Step 1} The vanishing does not occur.\\
If $(\rho_{n})$ vanishes, then we can deduce that there exists $R>0$ such that 
$$
\lim_{n\rightarrow \infty} \sup_{y\in \R^{3}} \int_{B_{R}(y)} |u_{n}|^{2}dx=0.
$$
Therefore, in view of Lemma \ref{Lions}, we get $|u_{n}|\rightarrow 0$ in $L^{p}(\R^{3}, \R)$ for all $p\in (2, \2)$. This fact and $(f_1)$-$(f_2)$ imply that
\begin{equation}\label{Lf}
\int_{\R^{3}} f(|u_{n}|^{2})|u_{n}|^{2}dx=\int_{\R^{3}} F(|u_{n}|^{2})dx=o_{n}(1).
\end{equation}
Let us consider
\begin{align}\label{defIe}
I_{\e}(u):=J_{\e}(u)+\frac{bB^{2}}{2}[u]^{2}_{A_{\e}}-\frac{b}{4}[u]^{4}_{A_{\e}}.
\end{align}
It is easy to check that
\begin{align}\label{ieun}
I_{\e}(u_{n})=c+\frac{bB^{4}}{4}+o_{n}(1) \mbox{ and }  I'_{\e}(u_{n})=o_{n}(1).
\end{align}
Taking into account $(u_{n})$ is bounded, $\langle I'_{\e}(u_{n}), u_{n}\rangle=o_{n}(1)$ and \eqref{Lf} we can deduce that
\begin{align}\label{LLL1}
o_{n}(1)=\|u_{n}\|^{2}_{\e}+bB^{2}[u_{n}]^{2}_{\e}-|u_{n}|_{\2}^{\2}
\end{align}
which together with the Sobolev embedding $D^{s,2}(\R^{3}, \R) \subset L^{\2}(\R^{3}, \R)$, the definition of $\|\cdot\|_{\e}$ and \eqref{B^2} yields that
\begin{equation}\label{LLL2}
aS_{*}|u_{n}|_{\2}^{2}+bS_{*}^{2}|u_{n}|_{\2}^{4}\leq |u_{n}|_{\2}^{\2}+o_{n}(1).
\end{equation}
Assume that $|u_{n}|_{\2}^{\2}\rightarrow \mathcal{L}^{3}\geq 0$. Clearly, $\mathcal{L}>0$ and using \eqref{LLL2} we obtain that 
$$
aS_{*}\mathcal{L}^{3-2s}+bS_{*}^{2}\mathcal{L}^{6-4s}-\mathcal{L}^{3}\leq 0,
$$
which implies that $\mathcal{L}\geq T$, where $T$ is the unique maximum point of $K(t)$ which is defined in \eqref{defK}. Therefore, from \eqref{ieun}, \eqref{LLL1} and $\mathcal{L}\geq T$ we can see that
\begin{align}\begin{split}\label{LT}
c&=I_{\e}(u_{n})-\frac{bB^{4}}{4}-\frac{1}{\2}\langle I'_{\e}(u_{n}), u_{n}\rangle+o_{n}(1)\\
&=\left(\frac{a}{2}-\frac{a}{\2}\right)[u_{n}]_{A_{\e}}^{2}+ \left(\frac{1}{2}-\frac{1}{\2}\right) \int_{\R^{3}} V_{\e}|u_{n}|^{2}dx+\left(\frac{bB^{2}}{2}[u_{n}]^{2}_{A_{\e}}- \frac{bB^{4}}{4}\right)-\frac{bB^{2}}{\2}[u_{n}]^{2}_{A_{\e}}+o_{n}(1)\\
&\geq \left(\frac{a}{2}-\frac{a}{\2}\right)[u_{n}]_{A_{\e}}^{2}+\left(\frac{1}{2}-\frac{1}{\2}\right) \int_{\R^{3}} V_{\e}|u_{n}|^{2}dx+\left(\frac{b}{4}-\frac{b}{\2}\right)[u_{n}]^{4}_{A_{\e}}+o_{n}(1) \\
&\geq \left(\frac{a}{2}-\frac{a}{\2}\right)S_{*}|u_{n}|_{\2}^{2}+\left(\frac{b}{4}-\frac{b}{\2}\right)S^{2}_{*}|u_{n}|_{\2}^{4}+o_{n}(1) \\
&=\left(\frac{a}{2}-\frac{a}{\2}\right)S_{*}\mathcal{L}^{3-2s}+\left(\frac{b}{4}-\frac{b}{\2}\right)S^{2}_{*}\mathcal{L}^{6-4s}\\
&\geq \left(\frac{a}{2}-\frac{a}{\2}\right)S_{*}T^{3-2s}+\left(\frac{b}{4}-\frac{b}{\2}\right)S^{2}_{*}T^{6-4s} \\
&=\frac{a}{2}S_{*}T^{3-2s}+\frac{b}{4}T^{6-4s}-\frac{1}{\2}T^{3}=c_{*}
\end{split}\end{align}
which leads to a contradiction. 

{\bf Step 2} The dichotomy does not occur. \\
Assume by contradiction that there is $\alpha\in (0, \ell)$ and $(y_{n})\subset \R^{3}$ such that for every $\eta_{n}\rightarrow 0$, we can choose $(R_{n})\subset \R_{+}$ ($R_{n}>R_{0}/\e+R'$, for any fixed $\e>0$, $R_{0}, R'$ are positive constants defined later) with $R_{n}\rightarrow \infty$ such that
\begin{align}\label{Dun}
\limsup_{n\rightarrow \infty} \left|\alpha-\int_{B_{R_{n}}(y_{n})} \rho_{n}(x)dx\right|+\left|(\ell-\alpha)-\int_{B^{c}_{R_{n}}(y_{n})} \rho_{n}(x) dx\right|<\eta_{n}.
\end{align}
Let $\xi:\R^{+}\rightarrow [0,1]$ be a cut-off function such that $\xi(t)=1$ for $t\leq 1$, $\xi(t)=0$ for $t\leq 2$ and $|\xi'(t)|\leq 2$. Let us define
$$
v_{n}(x):=\xi\left(\frac{|x-y_{n}|}{R_{n}}\right)u_{n}(x) \mbox{ and } w_{n}(x):=\left(1-\xi\left(\frac{|x-y_{n}|}{R_{n}}\right)  \right) u_{n}(x).
$$ 
In order to lighten the notation we set $\xi_{n}(x):=\xi\left(\frac{|x-y_{n}|}{R_{n}}\right)$.\\
Let $\Omega_{n}=B_{2R_{n}}(y_{n})\setminus B_{R_{n}}(y_{n})$. By \eqref{Dun} it follows that $\int_{\Omega_{n}} \rho_{n}(x) dx\rightarrow 0$ which implies that
\begin{align}\begin{split}\label{TL}
&\int_{\Omega_{n}} \left[ a\int_{\R^{3}} \frac{|u_{n}(x)-u_{n}(y)e^{\imath (x-y)\cdot A_{\e}(\frac{x+y}{2})}|^{2}}{|x-y|^{3+2s}} dy+V_{\e}(x)|u_{n}(x)|^{2}\right] dx\rightarrow 0, \\
&\int_{\Omega_{n}} V_{\e} |u_{n}|^{2} dx\rightarrow 0, \\
&\int_{\Omega_{n}}|u_{n}|^{\2} dx\rightarrow 0.
\end{split}\end{align}
Combining \eqref{TL} and Lemma \ref{cutoff} we can see that
\begin{align}\label{39T}
\iint_{\Omega_{n}\times \R^{3}} \frac{|v_{n}(x)-v_{n}(y)e^{\imath (x-y)\cdot A_{\e}(\frac{x+y}{2})}|^{2}}{|x-y|^{3+2s}} dy&\leq C\int_{\Omega_{n}}|u_{n}|^{2}dx  \nonumber\\
&\quad+C\iint_{\Omega_{n}\times \R^{3}}  \frac{|u_{n}(x)-u_{n}(y)e^{\imath (x-y)\cdot A_{\e}(\frac{x+y}{2})}|^{2}}{|x-y|^{3+2s}} dy\rightarrow 0
\end{align}
and
\begin{align}\label{40T}
\iint_{\Omega_{n}\times \R^{3}}  \frac{|w_{n}(x)-w_{n}(y)e^{\imath (x-y)\cdot A_{\e}(\frac{x+y}{2})}|^{2}}{|x-y|^{3+2s}} dy&\leq C\int_{\Omega_{n}}|u_{n}|^{2}dx  \nonumber\\
&\quad+C\iint_{\Omega_{n}\times \R^{3}}  \frac{|u_{n}(x)-u_{n}(y)e^{\imath (x-y)\cdot A_{\e}(\frac{x+y}{2})}|^{2}}{|x-y|^{3+2s}} dy\rightarrow 0.
\end{align}
Now, we show that the following relations hold true:
\begin{align}\begin{split}\label{3.9LG}
&[u_{n}]_{A_{\e}}^{2}=[v_{n}]_{A_{\e}}^{2}+[w_{n}]_{A_{\e}}^{2}+o_{n}(1), \\
&\int_{\R^{3}} V_{\e} |u_{n}|^{2}=\int_{\R^{3}} V_{\e} |v_{n}|^{2}+\int_{\R^{3}} V_{\e} |w_{n}|^{2}+o_{n}(1)\\ 
&\int_{\R^{3}} F(|u_{n}|^{2}) dx=\int_{\R^{3}} F(|v_{n}|^{2}) dx+\int_{\R^{3}} F(|w_{n}|^{2}) dx+o_{n}(1)\\
&\int_{\R^{3}} f(|u_{n}|^{2})|u_{n}|^{2} dx=\int_{\R^{3}} f(|v_{n}|^{2})|v_{n}|^{2} dx+\int_{\R^{3}} f(|w_{n}|^{2})|w_{n}|^{2} dx+o_{n}(1)\\
&|u_{n}|^{\2}_{\2}=|v_{n}|_{\2}^{\2}+|w_{n}|_{\2}^{\2}+o_{n}(1).
\end{split}\end{align}
Let us observe that being $u_{n}=v_{n}+w_{n}$ it holds
\begin{align}\label{UVW}
[u_{n}]_{A_{\e}}^{2}&=[v_{n}]_{A_{\e}}^{2}+[w_{n}]_{A_{\e}}^{2} \nonumber\\
&\quad+2\Re\iint_{\R^{6}} \frac{(v_{n}(x)-v_{n}(y)e^{\imath A_{\e}(\frac{x+y}{2})\cdot (x-y)})\overline{(w_{n}(x)-w_{n}(y)e^{\imath A_{\e}(\frac{x+y}{2})\cdot (x-y)})}}{|x-y|^{3+2s}} dx dy.
\end{align}
Set 
\begin{align}\label{Hn}
H_{n}:=\Re\iint_{\R^{6}} \frac{(v_{n}(x)-v_{n}(y)e^{\imath A_{\e}(\frac{x+y}{2})\cdot (x-y)})\overline{(w_{n}(x)-w_{n}(y)e^{\imath A_{\e}(\frac{x+y}{2})\cdot (x-y)})}}{|x-y|^{3+2s}} dx dy.
\end{align}
Then we can see that $H_{n}$ can be estimated as follows
\begin{align}\label{Hn1234}
|H_{n}|\leq \sum_{i=1}^{4}  H^{i}_{n},
\end{align}
where
\begin{align*}
&H^{1}_{n}:=2\iint_{B_{R_{n}}(y_{n})\times B^{c}_{2R_{n}}(y_{n})} \frac{|u_{n}(x)||u_{n}(y)|}{|x-y|^{3+2s}} dx dy, \\
&H_{n}^{2}:=2\iint_{\Omega_{n}\times B_{R_{n}}(y_{n})} \frac{|v_{n}(x)-v_{n}(y)e^{\imath A_{\e}(\frac{x+y}{2})\cdot (x-y)}| |w_{n}(x)-w_{n}(y)e^{\imath A_{\e}(\frac{x+y}{2})\cdot (x-y)}|}{|x-y|^{3+2s}} dx dy, \\
&H_{n}^{3}:=\iint_{\Omega_{n}\times \Omega_{n}} \frac{|v_{n}(x)-v_{n}(y)e^{\imath A_{\e}(\frac{x+y}{2})\cdot (x-y)}| |w_{n}(x)-w_{n}(y)e^{\imath A_{\e}(\frac{x+y}{2})\cdot (x-y)}|}{|x-y|^{3+2s}} dx dy, \\
&H_{n}^{4}:=2\iint_{\Omega_{n}\times B^{c}_{2R_{n}}(y_{n})} \frac{|v_{n}(x)-v_{n}(y)e^{\imath A_{\e}(\frac{x+y}{2})\cdot (x-y)}| |w_{n}(x)-w_{n}(y)e^{\imath A_{\e}(\frac{x+y}{2})\cdot (x-y)}|}{|x-y|^{3+2s}} dx dy.
\end{align*}
Now, we estimate each $H_{n}^{i}$ for $i=1, 2, 3, 4$.
Using the H\"older inequality and the fact that if $x\in B_{R_{n}}(y_{n})$ and $y\in B^{c}_{2R_{n}}(y_{n})$ then $|x-y|\geq R_{n}$, we can see that
\begin{align}\begin{split}\label{Hn1}
H_{n}^{1}&\leq \iint_{B_{R_{n}}(y_{n})\times B^{c}_{2R_{n}}(y_{n})} \frac{|u_{n}(x)|^{2}+|u_{n}(y)|^{2}}{|x-y|^{3+2s}} dx dy\\
&=\int_{B_{R_{n}}(y_{n})} |u_{n}(x)|^{2} \left(\int_{B^{c}_{2R_{n}}(y_{n})} \frac{1}{|x-y|^{3+2s}} dy\right) dx\\
&\quad+\int_{B^{c}_{2R_{n}}(y_{n})} |u_{n}(y)|^{2} \left(\int_{B_{R_{n}}(y_{n})} \frac{1}{|x-y|^{3+2s}} dx\right) dy\\
&\leq \int_{B_{R_{n}}(y_{n})} |u_{n}(x)|^{2} \left(\int_{|x-y|\geq R_{n}} \frac{1}{|x-y|^{3+2s}} dy\right) dx\\
&\quad+\int_{B^{c}_{2R_{n}}(y_{n})} |u_{n}(y)|^{2} \left(\int_{|x-y|\geq R_{n}} \frac{1}{|x-y|^{3+2s}} dx\right) dy \\
&\leq \frac{C}{R^{2s}_{n}}\rightarrow 0
\end{split}\end{align}
as $n\rightarrow \infty$. On the other hand, by Lemma \ref{cutoff}, $V_{\e}\geq V_{0}$, \eqref{TL} and the definitions of $v_{n}$ and $w_{n}$ we get
\begin{align}\begin{split}\label{Hn2}
H^{2}_{n}&\leq \iint_{\Omega_{n}\times B_{R_{n}}(y_{n})} \frac{|v_{n}(x)-v_{n}(y)e^{\imath A_{\e}(\frac{x+y}{2})\cdot (x-y)}|^{2}}{|x-y|^{3+2s}} dx dy\\
&\quad+\iint_{\Omega_{n}\times B_{R_{n}}(y_{n})} \frac{|w_{n}(x)-w_{n}(y)e^{\imath A_{\e}(\frac{x+y}{2})\cdot (x-y)}|^{2}}{|x-y|^{3+2s}} dx dy \\
&\leq C\left(\int_{\Omega_{n}} V_{\e} |u_{n}|^{2}dx+\iint_{\Omega_{n}\times B_{R_{n}}(y_{n})} \frac{|u_{n}(x)-u_{n}(y)e^{\imath A_{\e}(\frac{x+y}{2})\cdot (x-y)}|^{2}}{|x-y|^{3+2s}} dx dy \right) \\
&\leq C\left(\int_{\Omega_{n}} V_{\e} |u_{n}|^{2}dx+\iint_{\Omega_{n}\times \R^{3}} \frac{|u_{n}(x)-u_{n}(y)e^{\imath A_{\e}(\frac{x+y}{2})\cdot (x-y)}|^{2}}{|x-y|^{3+2s}} dx dy \right)\rightarrow 0.
\end{split}\end{align}
In similar fashion we can show that $H_{n}^{4}\rightarrow 0$ as $n\rightarrow \infty$.
Finally, gathering \eqref{39T} and \eqref{40T} we can see that 
\begin{align}\begin{split}\label{Hn3}
H^{3}_{n}&\leq \iint_{\Omega_{n}\times \Omega_{n}} \frac{|v_{n}(x)-v_{n}(y)e^{\imath A_{\e}(\frac{x+y}{2})\cdot (x-y)}|^{2}}{|x-y|^{3+2s}} dx dy \\
&\quad+\iint_{\Omega_{n}\times \Omega_{n}} \frac{|w_{n}(x)-w_{n}(y)e^{\imath A_{\e}(\frac{x+y}{2})\cdot (x-y)}|^{2}}{|x-y|^{3+2s}} dx dy \\
&\leq \iint_{\Omega_{n}\times \R^{3}} \frac{|v_{n}(x)-v_{n}(y)e^{\imath A_{\e}(\frac{x+y}{2})\cdot (x-y)}|^{2}}{|x-y|^{3+2s}} dx dy \\
&\quad+\iint_{\Omega_{n}\times \R^{3}} \frac{|w_{n}(x)-w_{n}(y)e^{\imath A_{\e}(\frac{x+y}{2})\cdot (x-y)}|^{2}}{|x-y|^{3+2s}} dx dy \rightarrow 0.
\end{split}\end{align}
In conclusion, putting together \eqref{UVW}, \eqref{Hn}, \eqref{Hn1234}, \eqref{Hn1}, \eqref{Hn2} and \eqref{Hn3}, we can deduce that the first identity in \eqref{3.9LG} holds true.
Concerning the second identity in \eqref{3.9LG}, from the definitions of $v_{n}$ and $w_{n}$ and using \eqref{TL} it follows that
\begin{align*}
\int_{\R^{3}}V_{\e}[|u_{n}|^{2}-|v_{n}|^{2}-|w_{n}|^{2}]dx&=\int_{\R^{3}} V_{\e}(1-\xi_{n}^{2}-(1-\xi_{n})^{2})|u_{n}|^{2}dx\\
&=\int_{\Omega_{n}} V_{\e}(1-\xi_{n}^{2}-(1-\xi_{n})^{2})|u_{n}|^{2}dx\\
&\leq \int_{\Omega_{n}} V_{\e}|u_{n}|^{2}dx\rightarrow 0.
\end{align*}
A similar argument shows that the fifth identity in \eqref{3.9LG} holds true.
Finally, we only prove the third identity since the fourth one can be obtained in a similar way.
By $(f_1)$-$(f_2)$, the H\"older inequality and \eqref{TL} we get
\begin{align*}
&\left| \int_{\R^{3}} F(|u_{n}|^{2}) - F(|v_{n}|^{2}) - F(|w_{n}|^{2})\, dx \right| \\
&= \left| \int_{\Omega_{n}} F(|u_{n}|^{2}) - F(|\xi_{n} u_{n}|^{2}) - F(|(1-\xi_{n})u_{n}|^{2})\, dx \right|\\
&\leq C \left( \e \int_{\Omega_{n}} |u_{n}|^{4} \, dx + \int_{\Omega_{n}} |u_{n}|^{q}\, dx \right) \\
&\leq C \left[ \e \left( \int_{\Omega_{n}} |u_{n}|^{2}\, dx \right)^{2r_{1}} \left(\int_{\Omega_{n}} |u_{n}|^{2^{*}_{s}}\, dx \right)^{\frac{4(1-r_{1})}{2^{*}_{s}}} + \left( \int_{\Omega_{n}} |u_{n}|^{2}\, dx \right)^{\frac{qr_{2}}{2}}\left( \int_{\Omega_{n}} |u_{n}|^{2^{*}_{s}}\, dx \right)^{\frac{q(1-r_{2})}{2^{*}_{s}}}\right] \\
&\rightarrow 0 \mbox{ as } n\rightarrow \infty, 
\end{align*}
where $r_{1}, r_{2}\in (0, 1)$ are such that $\frac{1}{4}= \frac{r_{1}}{2}+  \frac{1-r_{1}}{2^{*}_{s}}$ and $\frac{1}{q}= \frac{r_{2}}{2}+  \frac{1-r_{2}}{2^{*}_{s}}$. 
Therefore, we have proved that all identities in \eqref{3.9LG}  are true.

Now, we note that
\begin{align}\begin{split}\label{3.10LG}
[u_{n}]^{4}_{A_{\e}}&=([v_{n}]^{2}_{A_{\e}}+[w_{n}]^{2}_{A_{\e}}+o_{n}(1))^{2} \\
&=[v_{n}]^{4}_{A_{\e}}+[w_{n}]^{4}_{A_{\e}}+2[v_{n}]^{2}_{A_{\e}}[w_{n}]^{2}_{A_{\e}}+o_{n}(1) \\
&\geq [v_{n}]^{4}_{A_{\e}}+[w_{n}]^{4}_{A_{\e}} +o_{n}(1).
\end{split}\end{align}
Hence, in view of \eqref{3.9LG} and the definition of $\Phi(u)$, we can see that
$$
\Phi(u_{n})=\Phi(v_{n})+\Phi(w_{n})+o_{n}(1).
$$
Let us prove that
\begin{align}\label{3.8LG}
\liminf_{n\rightarrow \infty} \Phi(v_{n})\geq \alpha \quad \mbox{ and } \quad \liminf_{n\rightarrow \infty} \Phi(w_{n})\geq \ell-\alpha.
\end{align}
We only show the first relation of limit in \eqref{3.8LG} because the second one can be obtained in a similar way. We begin observing that
\begin{align}\begin{split}\label{ABCn}
&\iint_{\R^{6}} \frac{|v_{n}(x)-v_{n}(y)e^{\imath A_{\e}(\frac{x+y}{2})\cdot (x-y)}|^{2}}{|x-y|^{3+2s}} dx dy \\
&=\left(\iint_{B_{R_{n}}(y_{n})\times\R^{3}}+\iint_{\Omega_{n}\times \R^{3}}+\iint_{B_{2R_{n}}(y_{n})\times \R^{3}}\right) 
\frac{|v_{n}(x)-v_{n}(y)e^{\imath A_{\e}(\frac{x+y}{2})\cdot (x-y)}|^{2}}{|x-y|^{3+2s}} dx dy\\
&=:A_{n}+B_{n}+C_{n}.
\end{split}\end{align}
Using \eqref{39T} we can see that $B_{n}=o_{n}(1)$. On the other hand, being $v_{n}=0$ in $B^{c}_{2R_{n}}(y_{n})$, we can see that
\begin{align*}
C_{n}=\iint_{B^{c}_{2R_{n}}(y_{n})\times \Omega_{n}} \frac{|v_{n}(x)-v_{n}(y)e^{\imath A_{\e}(\frac{x+y}{2})\cdot (x-y)}|^{2}}{|x-y|^{3+2s}} dx dy+\iint_{B^{c}_{2R_{n}}(y_{n})\times B_{R_{n}}(y_{n})} \frac{|u_{n}(y)|^{2}}{|x-y|^{3+2s}} dx dy=o_{n}(1).
\end{align*}
Indeed, the first integral can be estimates arguing as in \eqref{Hn2}, while the second one as in \eqref{Hn1}.
Finally, we prove that
\begin{align}\label{AnN}
A_{n}=\iint_{B_{R_{n}}(y_{n})\times \R^{3}} \frac{|u_{n}(x)-u_{n}(y)e^{\imath A_{\e}(\frac{x+y}{2})\cdot (x-y)}|^{2}}{|x-y|^{3+2s}} dx dy+o_{n}(1).
\end{align}
Let us note that
\begin{align*}
A_{n}&=\iint_{B_{R_{n}}(y_{n})\times B_{R_{n}}(y_{n})} \frac{|u_{n}(x)-u_{n}(y)e^{\imath A_{\e}(\frac{x+y}{2})\cdot (x-y)}|^{2}}{|x-y|^{3+2s}} dx dy\\
&\quad +\iint_{B_{R_{n}}(y_{n})\times \Omega_{n}} \frac{|v_{n}(x)-v_{n}(y)e^{\imath A_{\e}(\frac{x+y}{2})\cdot (x-y)}|^{2}}{|x-y|^{3+2s}} dx dy \\
&\quad+\iint_{B_{R_{n}}(y_{n})\times B^{c}_{2R_{n}}(y_{n})} \frac{|u_{n}(x)|^{2}}{|x-y|^{3+2s}} dx dy \\
&=\iint_{B_{R_{n}}(y_{n})\times B_{R_{n}}(y_{n})} \frac{|u_{n}(x)-u_{n}(y)e^{\imath A_{\e}(\frac{x+y}{2})\cdot (x-y)}|^{2}}{|x-y|^{3+2s}} dx dy+o_{n}(1)
\end{align*}
in view of the estimates done for \eqref{Hn2} and \eqref{Hn1} respectively.
On the other hand
\begin{align*}
&\iint_{B_{R_{n}}(y_{n})\times \R^{3}} \frac{|u_{n}(x)-u_{n}(y)e^{\imath A_{\e}(\frac{x+y}{2})\cdot (x-y)}|^{2}}{|x-y|^{3+2s}} dx dy\\
&=\iint_{B_{R_{n}}(y_{n})\times B_{R_{n}}(y_{n})} \frac{|u_{n}(x)-u_{n}(y)e^{\imath A_{\e}(\frac{x+y}{2})\cdot (x-y)}|^{2}}{|x-y|^{3+2s}} dx dy\\
&\quad +\iint_{B_{R_{n}}(y_{n})\times \Omega_{n}} \frac{|u_{n}(x)-u_{n}(y)e^{\imath A_{\e}(\frac{x+y}{2})\cdot (x-y)}|^{2}}{|x-y|^{3+2s}} dx dy\\
&\quad +\iint_{B_{R_{n}}(y_{n})\times B^{c}_{2R_{n}}(y_{n})} \frac{|u_{n}(x)-u_{n}(y)e^{\imath A_{\e}(\frac{x+y}{2})\cdot (x-y)}|^{2}}{|x-y|^{3+2s}} dx dy.
\end{align*}
We point out that the second term on the right hand side of the above identity, can be estimated as in \eqref{Hn2}, while for the third term, we  first  use the fact that $|u_{n}(x)-u_{n}(y)e^{\imath A_{\e}(\frac{x+y}{2})\cdot (x-y)}|^{2}\leq 2(|u_{n}(x)|^{2}+|u_{n}(y)|^{2})$ and then one argue as in \eqref{Hn1}. 
Therefore, 
\begin{align*}
&\iint_{B_{R_{n}}(y_{n})\times \R^{3}} \frac{|u_{n}(x)-u_{n}(y)e^{\imath A_{\e}(\frac{x+y}{2})\cdot (x-y)}|^{2}}{|x-y|^{3+2s}} dx dy\\
&=\iint_{B_{R_{n}}(y_{n})\times B_{R_{n}}(y_{n})} \frac{|u_{n}(x)-u_{n}(y)e^{\imath A_{\e}(\frac{x+y}{2})\cdot (x-y)}|^{2}}{|x-y|^{3+2s}} dx dy+o_{n}(1)
\end{align*}
and this implies that \eqref{AnN} is verified. Consequently,
\begin{align*}
\iint_{\R^{6}} \frac{|v_{n}(x)-v_{n}(y)e^{\imath A_{\e}(\frac{x+y}{2})\cdot (x-y)}|^{2}}{|x-y|^{3+2s}} dx dy =\iint_{B_{R_{n}}(y_{n})\times \R^{3}} \frac{|u_{n}(x)-u_{n}(y)e^{\imath A_{\e}(\frac{x+y}{2})\cdot (x-y)}|^{2}}{|x-y|^{3+2s}} dx dy+o_{n}(1). 
\end{align*}
Then, using $v_{n}=u_{n}$ in $B_{R_{n}}(y_{n})$, \eqref{Dun}  and the definition of $\rho_{n}$ we can deduce that  the first relation of limit in \eqref{3.8LG} holds true.\\
Hence, in the light of \eqref{3.8LG} we can infer that
$$
\ell=\lim_{n\rightarrow \infty} \Phi(u_{n})\geq \lim_{n\rightarrow \infty} \Phi(v_{n})+\lim_{n\rightarrow \infty} \Phi(w_{n})\geq \ell,
$$
that is
\begin{align}\label{3.11LG}
\lim_{n\rightarrow \infty} \Phi(v_{n})=\alpha \mbox{ and } \lim_{n\rightarrow \infty} \Phi(w_{n})=\ell-\alpha.
\end{align}
Taking into account \eqref{3.3LG}, \eqref{3.9LG} and \eqref{3.10LG} we deduce that
\begin{equation}\label{3.12LG}
0=\langle J'_{\e}(u_{n}), u_{n}\rangle+o_{n}(1)\geq \langle J'_{\e}(v_{n}), v_{n}\rangle+\langle J'_{\e}(w_{n}), w_{n}\rangle+o_{n}(1).
\end{equation}
Now, we distinguish two cases.\\
{\bf Case 1}. Up to subsequence, we may assume that either $\langle J'_{\e}(v_{n}), v_{n}\rangle\leq 0$ or $\langle J'_{\e}(w_{n}), w_{n}\rangle\leq 0$. Without loss of generality, suppose that  $\langle J'_{\e}(v_{n}), v_{n}\rangle\leq 0$.
Then
\begin{align}\label{3.13LG}
a[v_{n}]_{A_{\e}}^{2}+\int_{\R^{3}} V_{\e} |v_{n}|^{2}+b[v_{n}]^{4}_{A_{\e}}-\int_{\R^{3}} f(|v_{n}|^{2})|v_{n}|^{2}-|v_{n}|_{\2}^{\2}\leq 0.
\end{align}
Hence, for all $n\in \mathbb{N}$, there exists $t_{n}>0$ such that $t_{n}v_{n}\in \N_{\e}$ and $\langle J'_{\e}(t_{n}v_{n}), t_{n}v_{n}\rangle= 0$, that is
\begin{align}\label{3.14LG}
at_{n}^{2}[v_{n}]_{A_{\e}}^{2}+t_{n}^{2}\int_{\R^{3}} V_{\e} |v_{n}|^{2}+bt_{n}^{4}[v_{n}]^{4}_{A_{\e}}-\int_{\R^{3}} f(|t_{n}v_{n}|^{2})|t_{n}v_{n}|^{2}-t_{n}^{\2}|v_{n}|_{\2}^{\2}= 0.
\end{align} 
Combining \eqref{3.13LG} and \eqref{3.14LG} we can deduce that
\begin{align}
\left(1-\frac{1}{t_{n}^{2}} \right)\|v_{n}\|^{2}_{\e}+\int_{\R^{3}} \left(\frac{f(|t_{n}v_{n}|^{2})}{|t_{n}v_{n}|^{2}}-\frac{f(v_{n}|^{2})}{|v_{n}|^{2}}\right)|v_{n}|^{4}+(t_{n}^{\2-4}-1)|v_{n}|_{\2}^{\2}\leq 0,
\end{align}
which together with $(f_4)$ implies that $t_{n}\leq 1$. 
Then, using $t\mapsto \frac{1}{4}f(t)t-\frac{1}{2}F(t)$ is increasing for $t>0$ and \eqref{3.11LG} we get
\begin{align*}
c\leq J_{\e}(t_{n}v_{n})&=J_{\e}(t_{n}v_{n})-\frac{1}{4}\langle J'_{\e}(t_{n}v_{n}), t_{n}v_{n}\rangle \\
&=\frac{1}{4}t_{n}^{2}\|v_{n}\|_{\e}^{2}+\int_{\R^{3}} \frac{1}{4} f(|t_{n}v_{n}|^{2})|t_{n}v_{n}|^2-\frac{1}{2}F(|t_{n}v_{n}|^{2}) dx +\frac{4s-3}{12}t_{n}^{\2}|v_{n}|_{\2}^{\2} \\
&=\Phi(v_{n})\rightarrow \alpha<\ell=c
\end{align*}
and this gives a contradiction.

{\bf Case 2}. Up to a subsequence, we may assume that $\langle J'_{\e}(v_{n}), v_{n}\rangle >0$ and $\langle J'_{\e}(w_{n}), w_{n}\rangle >0$. \\
In view of \eqref{3.12LG}, we can deduce that $\langle J'_{\e}(v_{n}), v_{n}\rangle \rightarrow 0$ and $\langle J'_{\e}(w_{n}), w_{n}\rangle \rightarrow 0$ as $n\rightarrow \infty$. Using \eqref{3.9LG} and \eqref{3.10LG} we get
\begin{align}\label{3.15LG}
J_{\e}(u_{n})\geq J_{\e}(v_{n})+ J_{\e}(w_{n})+ o_{n}(1). 
\end{align}
If the sequence $(y_{n})\subset \R^{3}$ is bounded, we will deduce a contradiction by comparing $J_{\e}(w_{n})$ and $m_{V_{\infty}}$.
In this case, from assumption $(V)$, for any $\eta>0$ there exists $R_{0}>0$ such that 
\begin{align}\label{3.27TA}
V_{\e}(x)- V_{\infty}>-\eta, \quad \mbox{ for all  } |x|\geq \frac{R_{0}}{\e}. 
\end{align}
Since $(y_{n})\subset \R^{3}$ is bounded, there exists $R'>0$ such that $|y_{n}|\leq R'$. Thus 
$$
B^{c}_{R_{n}}(y_{n})\subset B^{c}_{R_{n}-R'}(0)\subset  B^{c}_{R_{0}/\e}(0)
$$ 
for $n$ large enough. Hence,  \eqref{3.27TA} yields
\begin{align*}
\int_{\R^{3}} (V_{\e}(x)- V_{\infty}) |w_{n}|^{2} \, dx &= \int_{|x-y_{n}|>R_{n}} (V_{\e}(x)- V_{\infty}) |w_{n}|^{2} \, dx \\
&\geq -\eta \int_{|x-y_{n}|>R_{n}} |w_{n}|^{2}\, dx \geq -C\eta, 
\end{align*}
and from the arbitrariness of $\eta$ we can infer that
\begin{align}\label{3.28TA}
\int_{\R^{3}} (V_{\e}(x) - V_{\infty})|w_{n}|^{2}\, dx \geq o_{n}(1).
\end{align}
Then, taking into account Lemma \ref{DI} and \eqref{3.28TA}  we get
\begin{align}\label{3.16LG1}
J_{\e}(w_{n})\geq E_{V_{\infty}}(|w_{n}|)+ o_{n}(1)
\end{align}
and 
\begin{align}\label{3.16LG2}
o_{n}(1)= \langle J'_{\e}(w_{n}), w_{n}\rangle \geq \langle E'_{V_\infty}(|w_{n}|), |w_{n}| \rangle +o_{n}(1). 
\end{align}
Arguing as in Case 1 and Lemma \ref{lemTA}, there exist two positive sequences $(t_{n}), (t'_{n})$ such that $t_{n}\leq 1$ for $n$ sufficiently large and $t'_{n}\rightarrow 1$ as $n\rightarrow \infty$, respectively, such that $t_{n}|w_{n}|\in \M_{V_{\infty}}$ and $t'_{n}v_{n}\in \N_{\e}$. \\
Using Lemma \ref{DI}, $t_{n}\leq 1$, $t\mapsto \frac{1}{4}f(t)t-\frac{1}{2}F(t)$ is increasing for $t>0$, we have 
\begin{align*}
J_{\e}(w_{n})&= J_{\e}(w_{n})- \frac{1}{4} \langle J'_{\e}(w_{n}), w_{n}\rangle +o_{n}(1)\\
&= \frac{1}{4}\|w_{n}\|_{\e}^{2} + \int_{\R^{3}} \frac{1}{4} f(|w_{n}|^{2}) |w_{n}|^{2} - \frac{1}{2}F(|w_{n}|^{2})\, dx + \frac{4s-3}{12} |w_{n}|_{2^{*}_{s}}^{2^{*}_{s}}\\
&\geq E_{V_\infty}(|w_{n}|)- \frac{1}{4}\langle E'_{V_\infty}(|w_{n}|), |w_{n}|\rangle + o_{n}(1)\\
&\geq E_{V_\infty}(t_{n}|w_{n}|) - \frac{1}{4}\langle E'_{V_\infty}(t_{n}|w_{n}|), t_{n}|w_{n}|\rangle + o_{n}(1)\\
&= E_{V_\infty}(t_{n}|w_{n}|)\geq c_{V_{\infty}} + o_{n}(1)
\end{align*}
and 
\begin{align*}
J_{\e}(v_{n})= J_{\e}(t'_{n}v_{n})+ o_{n}(1) \geq c_{\e} +o_{n}(1). 
\end{align*}
This combined with \eqref{3.15LG} yields $c\geq m_{V_{\infty}}+ c_{\e} \geq m_{V_{\infty}}$, which gives a contradiction. \\
If we suppose that $(y_{n})\subset \R^{3}$ is unbounded, we can argue as above to get a contradiction by comparing $J_{\e}(v_{n})$ and $m_{V_{\infty}}$. Consequently, the dichotomy does not happen.

{\bf Conclusion} From the above considerations and Proposition \ref{propL}, we can deduce that the sequence $(\rho_{n})$ is compact, that is there exists $(y_{n})\subset \R^{3}$ such that for every $\eta>0$ there exists $R>0$ such that $\int_{B_{R}^{c}(y_{n})} \rho_{n}\, dx <\eta$, which implies that 
\begin{align*}
\int_{B_{R}^{c}(y_{n})} V_{\e}(x) |u_{n}|^{2} + \frac{1}{4}[f(|u_{n}|^{2})|u_{n}|^{2}-2F(|u_{n}|^{2})] + |u_{n}|^{\2} \, dx <\eta. 
\end{align*}
In particular, by interpolation, we can see that for all fixed $r\in [2, \2]$ we get
\begin{align}\label{3.32TA}
\int_{ B^{c}_{R}(y_{n})} |u_{n}|^{r}dx&\leq \left(\int_{B^{c}_{R}(y_{n})} |u_{n}|^{2}dx\right)^{\frac{rr_{3}}{2}} \left(\int_{B^{c}_{R}(y_{n})} |u_{n}|^{\2}dx\right)^{\frac{r(1-r_{3})}{\2}}< C\eta,
\end{align}
for some $r_{3}\in (0, 1)$ such that $\frac{1}{r}=\frac{r_3}{2}+\frac{1-r_3}{\2}$. Hence, the sequence $(|u_{n}|^{r})$, with $r\in [2, \2]$, is compact. 

Now, we claim that $(y_{n})$ is bounded. Indeed, if this is not true, 
we can choose $r_{n}$ such that $|y_{n}|\geq r_{n}\geq R+\frac{R_{0}}{\e}$  with $r_n\rightarrow \infty$. For $n$ large enough, $B_{R}(y_{n})\subset B^{c}_{r_{n}-R}(0)\subset B^{c}_{R_{0}/\e}(0)$. In view of \eqref{3.32TA}
we can see that
\begin{align*}
\int_{\R^{3}} (V_{\e}(x)- V_{\infty}) |u_{n}|^{2}\, dx&=\int_{B_{R}(y_{n})} (V_{\e}(x)- V_{\infty}) |u_{n}|^{2}\, dx+\int_{B^{c}_{R}(y_{n})} (V_{\e}(x)- V_{\infty}) |u_{n}|^{2}\, dx \\
&\geq  -C\eta+o_{n}(1),
\end{align*}
and from the arbitrariness of $\eta$ we obtain 
$$
\int_{\R^{3}} (V_{\e}(x)- V_{\infty}) |u_{n}|^{2}\, dx\geq o_{n}(1).
$$
This fact together with Lemma \ref{DI} implies that $J_{\e}(u_{n})\geq E_{V_\infty}(|u_{n}|)+ o_{n}(1)$ and $\langle E'_{V_\infty}(|u_{n}|), |u_{n}|\rangle \leq \langle J'_{\e}(u_{n}), u_{n}\rangle =o_{n}(1)$. Arguing as in Case 1, we can find $t_{n}\leq 1$ for $n$ sufficiently large such that $t_{n}|u_{n}|\in \M_{V_{\infty}}$. Thus 
\begin{align*}
c&= J_{\e}(u_{n}) -\frac{1}{4} \langle J'_{\e}(u_{n}), u_{n}\rangle +o_{n}(1) \geq E_{V_\infty}(|u_{n}|) - \frac{1}{4}\langle E'_{V_\infty}(|u_{n}|), |u_{n}|\rangle + o_{n}(1)\\
&\geq E_{V_\infty}(t_{n}|u_{n}|) - \frac{1}{4}\langle E'_{V_\infty}(t_{n} |u_{n}|), t_{n}|u_{n}|\rangle + o_{n}(1)\\
&=E_{V_\infty}(t_{n}|u_{n}|)+ o_{n}(1)\geq m_{V_{\infty}}, 
\end{align*}
which gives a contradiction. \\
Therefore, $(y_{n})$ is bounded, and using \eqref{3.32TA} and Theorem \ref{Sembedding} we deduce that $|u_{n}|\rightarrow |u|$ in $L^{r}(\R^{3}, \R)$ for all $r\in [2, \2)$. This together with $(f_1)$-$(f_2)$ implies that the following splittings hold:
$$
\int_{\R^{3}} F(|h_{n}|^{2}) dx=\int_{\R^{3}} F(|u_{n}|^{2}) dx-\int_{\R^{3}} F(|u|^{2}) dx+o_{n}(1)=o_{n}(1)
$$
and
$$
\int_{\R^{3}} f(|h_{n}|^{2})|h_{n}|^{2} dx=\int_{\R^{3}} f(|u_{n}|^{2})|u_{n}|^{2} dx-\int_{\R^{3}} f(|u|^{2})|u|^{2} dx+o_{n}(1)=o_{n}(1)
$$
where $h_{n}:=u_{n}-u$. Then, applying Brezis-Lieb Lemma \cite{BL}, \eqref{B^2} and \eqref{ieun}, we have
\begin{align}\begin{split}\label{3.17LG}
c+\frac{bB^{4}}{4}-I_{\e}(u)+o_{n}(1)&=I_{\e}(u_{n})-I_{\e}(u)\\
&=\frac{(a+bB^{2})}{2}[h_{n}]^{2}_{A_{\e}}+\frac{1}{2}\int_{\R^{3}}V_{\e}|h_{n}|^{2}dx-\frac{1}{\2}|h_{n}|_{\2}^{\2}+o_{n}(1)
\end{split}\end{align}
and
\begin{align}\label{3.18LG}
o_{n}(1)=\langle I_{\e}'(u_{n}), u_{n}\rangle-\langle I'_{\e}(u), u\rangle=(a+bB^{2})[h_{n}]^{2}_{A_{\e}}+\int_{\R^{3}}V_{\e}|h_{n}|^{2}dx-|h_{n}|_{\2}^{\2}+o_{n}(1).
\end{align}
Here $I_{\e}$ is the functional defined in \eqref{defIe}.
We notice that in \eqref{3.18LG} we used the fact that $u$ is a critical point of $I_{\e}$ by the weak convergence and \eqref{B^2}. 
In particular, $(f_3)$ and $s>\frac{3}{4}$ give
\begin{align*}
I_{\e}(u)=I_{\e}(u)-\frac{1}{4}\langle I'_{\e}(u), u\rangle&=\frac{1}{4}(a+bB^{2})[u]^{2}_{\e}+\int_{\R^{3}}V_{\e}|u|^{2}dx\\
&\quad+\int_{\R^{3}}\frac{1}{4}f(|u|^{2})|u|^{2}-\frac{1}{2}F(|u|^{2})dx+\frac{4s-3}{12}|u|_{\2}^{\2}\geq 0.
\end{align*} 
Now, we may assume that $|h_{n}|_{\2}^{\2}\rightarrow \mathcal{L}^{3}\geq 0$. Suppose that $\mathcal{L}>0$. Then, arguing as in the Step 1, we can see that \eqref{3.18LG} and Sobolev embedding yield
$$
aS_{*}\mathcal{L}^{3-2s}+bS_{*}^{2}\mathcal{L}^{6-4s}-\mathcal{L}^{3}\leq 0,
$$
that is $\mathcal{L}\geq T$, where $T$ is the unique maximum point of $K(t)$ which is defined as in \eqref{defK}. Thus, taking into account \eqref{3.17LG},  \eqref{3.18LG} and $I_{\e}(u)\geq 0$, we can repeat the same calculations done in \eqref{LT} to infer that 
\begin{align*}
c&\geq I_{\e}(u_{n})-\frac{1}{\2}\langle I_{\e}'(u_{n}), u_{n}\rangle-\frac{bB^{4}}{4}+o_{n}(1)\\
&\geq \left(\frac{a}{2}-\frac{a}{\2}\right)S_{*}|h_{n}|_{\2}^{2}+\left(\frac{b}{4}-\frac{b}{\2}\right)S^{2}_{*}|h_{n}|_{\2}^{4}+o_{n}(1) \\
&=\left(\frac{a}{2}-\frac{a}{\2}\right)S_{*}\mathcal{L}^{3-2s}+\left(\frac{b}{4}-\frac{b}{\2}\right)S^{2}_{*}\mathcal{L}^{6-4s}\\
&\geq \left(\frac{a}{2}-\frac{a}{\2}\right)S_{*}T^{3-2s}+\left(\frac{b}{4}-\frac{b}{\2}\right)S^{2}_{*}T^{6-4s} \\
&=\frac{a}{2}S_{*}T^{3-2s}+\frac{b}{4}T^{6-4s}-\frac{1}{\2}T^{3}=c_{*}
\end{align*}
that is a contradiction. Hence $\mathcal{L}=0$, and using \eqref{3.18LG} we can conclude that  $\|h_{n}\|_{\e}\rightarrow 0$, that is $u_{n}\rightarrow u$ in $\h$.

Now, we consider the case $V_{\infty}=\infty$. 
In view of Theorem \ref{Sembedding} we know that $(|u_{n}|)$ strongly converges in $L^{r}(\R^{3}, \R)$ for all $r\in [2, \2)$. Then, setting $h_{n}=u_{n}-u$, we can argue as above to deduce that $u_{n}\rightarrow u$ in $\h$.
\end{proof}

\noindent
We are now ready to prove our main compactness result of this section.
\begin{prop}\label{propPSc}
Let $c\in \R_{+}$ be such that $c<m_{V_{\infty}}$ if $V_{\infty}<\infty$, and $c<c_{*}$ if $V_{\infty}=\infty$. Then, the functional $J_{\e}$ restricted to $\mathcal{N}_{\e}$ satisfies the $(PS)_{c}$ condition at the level $c$.
\end{prop}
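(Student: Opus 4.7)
The strategy is to reduce the constrained $(PS)_c$ condition on $\mathcal{N}_\e$ to the unconstrained one already handled in Lemma \ref{PSc}. Let $(u_n)\subset\mathcal{N}_\e$ satisfy $J_\e(u_n)\to c$ and $\|(J_\e|_{\mathcal{N}_\e})'(u_n)\|_*\to 0$. Define
$$
G(u):=\langle J'_\e(u),u\rangle=\|u\|_\e^{2}+b[u]_{A_\e}^{4}-\int_{\R^{3}}f(|u|^{2})|u|^{2}\,dx-|u|_{\2}^{\2},
$$
so that $\mathcal{N}_\e=\{u\neq 0:G(u)=0\}$. Since $f\in C^{1}$, the map $G$ is $C^{1}$ on $\h$, and by the Lagrange multiplier rule there exists $(\lambda_n)\subset\R$ with $J'_\e(u_n)=\lambda_n G'(u_n)+o_n(1)$ in $\h^{*}$. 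By Lemma \ref{lem2.3HZ} the sequence $(u_n)$ is bounded in $\h$, hence so is $(G'(u_n))$.

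The decisive step is to control $\langle G'(u_n),u_n\rangle$ from above by a strictly negative constant. Setting $h(t):=G(tu)$ and computing $h'(1)$ for $u\in\mathcal{N}_\e$ gives
$$
\langle G'(u),u\rangle=h'(1)=2b[u]_{A_\e}^{4}+(2-\2)|u|_{\2}^{\2}-2\int_{\R^{3}}f'(|u|^{2})|u|^{4}\,dx,
$$
after eliminating $\|u\|_\e^{2}$ using $G(u)=0$. From $(f_{4})$ we deduce $f'(t)t\geq f(t)$ for $t>0$, so $\int f'(|u|^{2})|u|^{4}\,dx\geq\int f(|u|^{2})|u|^{2}\,dx$. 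Inserting this and using the Nehari identity once more leads to
$$
\langle G'(u),u\rangle\leq(4-\2)|u|_{\2}^{\2}-2\|u\|_\e^{2}\leq-2\|u\|_\e^{2}\leq-2r^{2}<0,
$$
where I have used $\2>4$ (valid precisely because $s>3/4$) and the uniform lower bound \eqref{uNr}. Thus $\langle G'(u_n),u_n\rangle\leq-2r^{2}$ uniformly.

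Testing the Lagrange identity on $u_n$ yields $\langle J'_\e(u_n),u_n\rangle=\lambda_n\langle G'(u_n),u_n\rangle+o_n(1)$. The left-hand side vanishes because $u_n\in\mathcal{N}_\e$, so $\lambda_n\langle G'(u_n),u_n\rangle=o_n(1)$. The uniform strictly negative bound forces $\lambda_n\to 0$, and boundedness of $G'(u_n)$ then gives $J'_\e(u_n)\to 0$ in $\h^{*}$. Therefore $(u_n)$ is an unconstrained $(PS)_c$ sequence for $J_\e$, and Lemma \ref{PSc} provides a strongly convergent subsequence in $\h$.

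The main obstacle is the quantitative coercivity estimate on $\langle G'(u),u\rangle$ along $\mathcal{N}_\e$: it is here that the restriction $s>3/4$ (equivalently $\2>4$) and the monotonicity assumption $(f_{4})$ combine to produce a sign, allowing the Lagrange multipliers to vanish; the subsequent reduction to Lemma \ref{PSc} is then routine.
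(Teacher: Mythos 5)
Your argument is correct and follows essentially the same route as the paper: Lagrange multipliers on $\mathcal{N}_{\e}$, a sign estimate on $\langle G'(u_n),u_n\rangle$ derived from $(f_4)$ (via $f'(t)t\geq f(t)$) and $\2>4$, vanishing of the multipliers, and reduction to Lemma \ref{PSc}. Your version is in fact slightly cleaner: by applying the Nehari identity a second time you obtain the uniform bound $\langle G'(u_n),u_n\rangle\leq -2r^{2}$ directly from \eqref{uNr}, whereas the paper keeps only the $-(\2-4)|u_n|_{\2}^{\2}$ term and must rule out the degenerate case $\langle T'_{\e}(u_n),u_n\rangle\to 0$ by a separate interpolation argument.
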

\begin{proof}
Let $(u_{n})\subset \mathcal{N}_{\e}$ be such that $J_{\e}(u_{n})\rightarrow c$ and $\|J'_{\e}(u_{n})_{|\mathcal{N}_{\e}}\|_{*}=o_{n}(1)$. Then (see \cite{W}) we can find $(\lambda_{n})\subset \R$ such that
\begin{equation}\label{AFT}
J'_{\e}(u_{n})=\lambda_{n} T'_{\e}(u_{n})+o_{n}(1),
\end{equation}
where $T_{\e}: \h\rightarrow \R$ is defined as
\begin{align*}
T_{\e}(u)=\|u\|_{\e}^{2}+b[u]^{4}_{A_{\e}}-\int_{\R^{3}} f(|u|^{2})|u|^{2}+|u|^{\2}\, dx.
\end{align*}
In view of $\langle J'_{\e}(u_{n}), u_{n}\rangle=0$, $(f_4)$ and $s>\frac{3}{4}$ we obtain
\begin{align}
&\langle T'_{\e}(u_{n}), u_{n}\rangle \nonumber\\
&=2\|u_{n}\|_{\e}^{2}+4 b[u_{n}]^{4}_{A_{\e}}-2\int_{\R^{3}} f'(|u_{n}|^{2})|u_{n}|^{4}\, dx-2\int_{\R^{3}} f(|u_{n}|^{2})|u_{n}|^{2}\, dx-\2|u_{n}|_{\2}^{\2} \nonumber\\
&=-2\|u_{n}\|^{2}_{\e}+2\int_{\R^{3}} f(|u_{n}|^{2})|u_{n}|^{2}\, dx-2\int_{\R^{3}} f'(|u_{n}|^{2})|u_{n}|^{4}\, dx-(\2-4)|u_{n}|_{\2}^{\2} \nonumber \\
&\leq -(\2-4)\int_{\R^{3}} |u_{n}|^{\2} dx< 0 \label{22ZS1}.
\end{align}
Since $(u_{n})$ is bounded in $\h$, we may assume that $\langle T'_{\e}(u_{n}), u_{n}\rangle\rightarrow \ell\leq 0$. If $\ell=0$, from \eqref{22ZS1} it follows that $|u_{n}|\rightarrow 0$ in $L^{\2}(\R^{3}, \R)$. By interpolation, we can see that $|u_{n}|\rightarrow 0$ in $L^{p}(\R^{3}, \R)$ for all $p\in [2, \2]$. This fact combined with $\langle J'_{\e}(u_{n}), u_{n}\rangle=0$ and $(f_1)$-$(f_2)$ implies that
\begin{equation*}
\|u_{n}\|^{2}_{\e}\leq \int_{\R^{3}} f(|u_{n}|^{2}) |u_{n}|^{2} \,dx+|u_{n}|_{\2}^{\2}= o_{n}(1)
\end{equation*}
that is $\|u_{n}\|_{\e}\rightarrow 0$ which contradicts \eqref{uNr}. Consequently, $\ell<0$ and in the light of \eqref{AFT} we can deduce that $\lambda_{n}\rightarrow 0$. Hence, $(u_{n})$ is a $(PS)_{c}$ sequence for the unconstrained functional and we can apply Lemma \ref{PSc} to get the thesis.
\end{proof}

\noindent
As a byproduct of the above proof we have the following result.
\begin{cor}\label{cor}
The critical points of the functional $J_{\e}$ on $\mathcal{N}_{\e}$ are critical points of $J_{\e}$.
\end{cor}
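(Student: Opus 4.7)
The statement is a standard consequence of the Lagrange multiplier rule applied on the Nehari manifold, combined with the key computation already performed in the proof of Proposition \ref{propPSc}. My plan is as follows.

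Let $u \in \N_{\e}$ be a critical point of $J_{\e}|_{\N_{\e}}$. Since $\N_{\e}$ is the zero set of the $C^{1}$ functional $T_{\e}: \h \to \R$ given by
$$
T_{\e}(u) = \|u\|_{\e}^{2} + b[u]^{4}_{A_{\e}} - \int_{\R^{3}} f(|u|^{2})|u|^{2}\,dx - |u|_{\2}^{\2},
$$
the Lagrange multiplier theorem provides $\lambda \in \R$ such that $J'_{\e}(u) = \lambda\, T'_{\e}(u)$, provided $T'_{\e}(u) \neq 0$. Testing this identity against $u$ itself and using $\langle J'_{\e}(u), u\rangle = 0$ (since $u \in \N_{\e}$) gives $\lambda \langle T'_{\e}(u), u \rangle = 0$.

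The heart of the argument is then to show that $\langle T'_{\e}(u), u\rangle < 0$, which forces $\lambda = 0$ and hence $J'_{\e}(u) = 0$ in $\h^{*}$. This is exactly the computation carried out in \eqref{22ZS1} in the proof of Proposition \ref{propPSc}: using $\langle J'_{\e}(u), u\rangle = 0$ to eliminate the quadratic and quartic terms, together with $(f_4)$ (which yields $f'(t)t \geq 0$, so $\int f'(|u|^{2})|u|^{4}\,dx \geq 0$) and the fact that $s > 3/4$ ensures $\2 > 4$, one obtains
$$
\langle T'_{\e}(u), u\rangle \leq -(\2 - 4)\int_{\R^{3}} |u|^{\2}\,dx.
$$
Since $u \neq 0$ on $\N_{\e}$ (by \eqref{uNr}, $\|u\|_{\e} \geq r > 0$), standard bootstrapping via $(f_{1})$-$(f_{2})$ applied to the Nehari identity forces $|u|_{\2} \neq 0$, so $\langle T'_{\e}(u), u\rangle < 0$ strictly. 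This completes the argument.

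The main obstacle, if any, is ensuring that $|u|_{\2}$ does not vanish for $u \in \N_{\e}$; but this follows immediately because otherwise the Nehari identity would reduce to $\|u\|_{\e}^{2} + b[u]_{A_{\e}}^{4} = \int f(|u|^{2})|u|^{2}\,dx$, which combined with the growth estimate from $(f_{1})$-$(f_{2})$ and the Sobolev embedding contradicts $\|u\|_{\e} \geq r > 0$. Thus the passage from constrained to free critical points is automatic, and no further analytical input beyond the computation \eqref{22ZS1} is required.
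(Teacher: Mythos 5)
Your argument is correct and is precisely the paper's implicit proof: the author's remark that the corollary is ``a byproduct of the above proof'' refers to exactly the Lagrange multiplier computation you spell out, culminating in \eqref{22ZS1}. One small slip in your gloss of \eqref{22ZS1}: condition $(f_4)$ (monotonicity of $f(t)/t$) gives the sharper $f'(t)t\geq f(t)$, not merely $f'(t)t\geq 0$, and it is this stronger inequality that kills the positive term $2\int f(|u|^2)|u|^2\,dx$ left over after using the Nehari identity (with only $f'(t)t\geq 0$ the estimate would not close). Since you cite \eqref{22ZS1} directly this does not affect the validity of the proof. Also, your final paragraph is more elaborate than necessary: $|u|_{\2}=0$ forces $u=0$ a.e., contradicting $\|u\|_{\e}\geq r>0$ at once; in fact $\langle T'_{\e}(u),u\rangle<0$ already follows from the $-2\|u\|_{\e}^{2}$ term alone, without isolating $|u|_{\2}$.
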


\section{Existence result for \eqref{Pe}}
In view of Proposition \ref{propPSc} we can establish a first existence result for \eqref{Pe} provided that $\e>0$ is sufficiently small. More precisely, we obtain:
\begin{thm}\label{AMlem1}
Assume that $(V)$ and $(f_1)$-$(f_4)$ hold. Then there exists $\e^{*}>0$ such that for any $\e\in (0, \e^{*})$, problem \eqref{Pe} admits a ground state solution.
\end{thm}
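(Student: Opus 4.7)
The plan is to verify that the mountain pass level $c_{\e}$ associated with $J_{\e}$ lies strictly below the compactness threshold identified in Lemma~\ref{PSc}, namely $m_{V_{\infty}}$ (when $V_{\infty}<\infty$) or $c_{*}$ (when $V_{\infty}=\infty$), so that a bounded Palais--Smale sequence at level $c_{\e}$ converges to a nontrivial critical point, which in view of Corollary~\ref{cor} is a ground state.

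First I would compare the minimax levels of the two autonomous problems. Using the monotonicity of $\mu \mapsto E_{\mu}(u)$ (which is strict on $H^{s}_{\mu}\setminus\{0\}$), the characterization $m_{\mu}=\inf_{u\neq 0}\max_{t\geq 0} E_{\mu}(tu)$, and the existence of a positive ground state $w \in \mathcal{M}_{V_{0}}$ from Lemma~\ref{FS}, a standard rescaling argument gives $m_{V_{0}}<m_{V_{\infty}}$ when $V_{\infty}<\infty$ (and the inequality $m_{V_{0}}<c_{*}$ follows from Lemma~\ref{lem2.6HZ}). Fix $x_{0}\in M$, so that $V(x_{0})=V_{0}$. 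Let $\eta\in C^{\infty}_{c}(\R^{3},\R)$ be a cut-off supported near $x_{0}$ with $\eta\equiv 1$ on a neighborhood of $x_{0}$, and consider the test function
\[
\psi_{\e}(x):=e^{\imath A(x_{0})\cdot x}\,\eta(\e x-x_{0})\,w\!\left(x-\tfrac{x_{0}}{\e}\right),
\]
which belongs to $\h$ by Lemma~\ref{aux}. By Lemma~\ref{lem2.2HZ} there exists a unique $t_{\e}>0$ such that $t_{\e}\psi_{\e}\in \mathcal{N}_{\e}$, hence $c_{\e}\leq J_{\e}(t_{\e}\psi_{\e})$.

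The core estimate is to show that
\[
\limsup_{\e\to 0}\, J_{\e}(t_{\e}\psi_{\e})=E_{V_{0}}(w)=m_{V_{0}}.
\]
Under the change of variables $y=x-x_{0}/\e$, the potential $V_{\e}(x)$ and the magnetic potential $A_{\e}(x)$ converge locally (in $L^{\infty}$ on compact sets, using continuity of $V$ and Hölder continuity of $A$) to the constants $V_{0}$ and $A(x_{0})$ respectively; the phase factor $e^{\imath A(x_{0})\cdot x}$ is designed precisely to absorb the limiting magnetic oscillation, so that the magnetic Gagliardo seminorm $[\psi_{\e}]^{2}_{A_{\e}}$ is compared to $[\eta(\e\,\cdot\,)w]^{2}$ with an error $o_{\e}(1)$ (this type of estimate follows by splitting the Gagliardo quotient and using the Hölder continuity of $A$ together with dominated convergence, exactly as in the arguments underlying Lemma~\ref{cutoff}). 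The cut-off error $\eta(\e\,\cdot\,)w\to w$ in $H^{s}(\R^{3},\R)$ as $\e\to 0$ by dominated convergence. Combining these facts with boundedness of $t_{\e}$ (which is checked by a standard compactness argument on the Nehari equation using $(f_1)$--$(f_4)$, since a subsequence $t_{\e}\to t_{0}$ with $t_{0}w\in \mathcal{M}_{V_{0}}$, forcing $t_{0}=1$), one obtains the claimed convergence.

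Consequently, for every sufficiently small $\e>0$,
\[
0<c_{\e}< \min\{m_{V_{\infty}},\, c_{*}\},
\]
and I would then invoke a version of the mountain pass theorem without the $(PS)$ condition to produce a Palais--Smale sequence $(u_{n})\subset \h$ at level $c_{\e}$. By Lemma~\ref{lem2.3HZ} this sequence is bounded, and by Lemma~\ref{PSc} it converges strongly (up to a subsequence) to some $u_{\e}\in \h$ with $J_{\e}(u_{\e})=c_{\e}$ and $J'_{\e}(u_{\e})=0$. Since $c_{\e}>0$ (in fact $c_{\e}\geq \alpha>0$ from Lemma~\ref{MPG}(ii)), $u_{\e}\neq 0$, and $u_{\e}\in \mathcal{N}_{\e}$ attains $c_{\e}=c_{\e}^{*}=\inf_{\mathcal{N}_{\e}} J_{\e}$, giving the desired ground state.

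The principal obstacle is the precise comparison $J_{\e}(t_{\e}\psi_{\e})\to m_{V_{0}}$. The difficulty is twofold: the magnetic phase introduces oscillatory factors into the fractional seminorm that must be shown to be asymptotically negligible (handled via the Hölder regularity of $A$), and the nonlocal Kirchhoff term $[\,\cdot\,]^{4}_{A_{\e}}$ couples the various pieces of the functional in a way that prevents a term-by-term expansion, so the convergence has to be carried out with care on the $[\,\cdot\,]^{2}_{A_{\e}}$ level before taking the square.
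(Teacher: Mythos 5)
Your proposal is correct and follows essentially the same strategy as the paper: compare the mountain pass level $c_{\e}$ with the compactness threshold of Lemma \ref{PSc} by testing $J_{\e}$ on a magnetic lift $e^{\imath A(x_{0})\cdot x}$ of the autonomous ground state $w$ centered at a minimum point of $V$, and then pass to the limit along a bounded Palais--Smale sequence. The only notable variation is that you aim directly for the sharp bound $\limsup_{\e\to 0}c_{\e}\leq m_{V_{0}}$ using an $\e$-scaled cut-off (this is in substance the paper's Lemma \ref{lem3.4}), whereas the paper's own proof of this theorem uses a cut-off $\eta_{r}$ at a fixed large scale $r$ together with an intermediate autonomous level $E_{\mu}$, $\mu\in(V_{0},V_{\infty})$, obtaining only $\limsup_{\e\to 0}c_{\e}\leq E_{\mu}(t_{r}|w_{r}|)<m_{V_{\infty}}$; in either version, be aware that the seminorm convergence $[w_{r}]^{2}_{A_{\e}}\rightarrow[\eta_{r}w]^{2}$ requires not only the H\"older continuity of $A$ but also the polynomial decay of $w$ (the paper's estimate \eqref{remdecay}) to control the weighted integral $\int_{\R^{3}}|y|^{2\alpha}w^{2}(y)\,dy$, a point your sketch leaves implicit.
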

\begin{proof}
By Lemma \ref{MPG} we know that $J_{\e}$ has a mountain pass geometry, so, using a version of the mountain pass theorem without $(PS)$ condition (see \cite{W}), there exists a $(PS)_{c_{\e}}$ sequence $(u_{n})\subset \h$ for $J_{\e}$.
By Lemma \ref{lem2.3HZ} we know that $(u_{n})$ is bounded in $\h$ so we may assume that $u_{n}\rightharpoonup u$ in $\h$. \\
Firstly, we consider the case $V_{\infty}<\infty$. In view of Proposition \ref{propPSc} it is enough to show that $c_{\e}<m_{V_{\infty}}$ for $\e>0$ small enough. Without loss of generality, we may suppose that 
$$
V(0)=V_{0}=\inf_{x\in \R^{3}} V(x).
$$
Let $\mu\in  (V_{0}, V_{\infty})$. Clearly $m_{V_{0}}<m_{\mu}<m_{V_{\infty}}$. 
In the light of Lemma \ref{FS}, we can find a positive ground state $w\in H^{s}_{V_{0}}$ to \eqref{AP0}, that is $E'_{V_{0}}(w)=0$ and $E_{V_{0}}(w)=m_{V_{0}}$. 
Since $w\in C^{1, \gamma}(\R^{3}, \R)\cap L^{\infty}(\R^{3},\R)$, for some $\gamma>0$, we get $|w(x)|\rightarrow 0$ as $|x|\rightarrow \infty$. Observing that $w$ satisfies
$$
(-\Delta)^{s}w+\frac{V_{0}}{a+bM^{2}}w=(a+b[w]^{2})^{-1}[f(w^{2})w+w^{\2-1}-V_{0} w]+\frac{V_{0}}{a+bM^{2}}w \mbox{ in } \R^{3},
$$
where $0<a\leq a+b[w]^{2}\leq a+bM^{2}$, we can argue as in Lemma $4.3$ in \cite{FQT} to deduce the following decay estimate
\begin{equation}\label{remdecay}
0<w(x)\leq \frac{C}{|x|^{3+2s}} \quad \mbox{ for } |x|>>1.
\end{equation}
Let $\eta\in C^{\infty}_{c}(\R^{3}, \R)$ be a cut-off function such that $\eta=1$ in $B_{1}(0)$ and $\eta=0$ in $B_{2}^{c}(0)$. Let us define $w_{r}(x):=\eta_{r}(x)w(x) e^{\imath A(0)\cdot x}$, with $\eta_r(x)=\eta(x/r)$ for $r>0$, and we observe that $|w_{r}|=\eta_{r}w$ and $w_{r}\in \h$ in view of Lemma \ref{aux}.
Take $t_{r}>0$ such that 
\begin{equation*}
E_{\mu}(t_{r} |w_{r}|)=\max_{t\geq 0} E_{\mu}(t |w_{r}|)
\end{equation*}
Let us prove that there exists $r$ sufficiently large such that $E_{\mu}(t_{r}|w_{r}|)<m_{V_{\infty}}$.\\
If by contradiction $E_{\mu}(t_{r}|w_{r}|)\geq m_{V_{\infty}}$ for any $r>0$, using the fact that $|w_{r}|\rightarrow w$ in $H^{s}(\R^{3}, \R)$ as $r\rightarrow \infty$ (see Lemma 5 in \cite{PP}), we have $t_{r}\rightarrow 1$ and
$$
m_{V_{\infty}}\leq \liminf_{r\rightarrow \infty} E_{\mu}(t_{r}|w_{r}|)=E_{\mu}(w)=m_{\mu}
$$
which gives a contradiction since $m_{V_{\infty}}>m_{\mu}$.
Hence, there exists $r>0$ such that
\begin{align}\label{tv19}
E_{\mu}(t_{r}|w_{r}|)=\max_{\tau\geq 0} E_{\mu}(\tau (t_{r} |w_{r}|)) \mbox{ and } E_{\mu}(t_{r}|w_{r}|)<m_{V_{\infty}}.
\end{align}
Now, we show that 
\begin{equation}\label{limwr}
\lim_{\e\rightarrow 0}[w_{r}]^{2}_{A_{\e}}=[\eta_{r}w]^{2}.
\end{equation}
Firstly, we note that 
\begin{align*}
[w_{r}]_{A_{\e}}^{2}
&=\iint_{\R^{6}} \frac{|e^{\imath A(0)\cdot x}\eta_{r}(x)w(x)-e^{\imath A_{\e}(\frac{x+y}{2})\cdot (x-y)}e^{\imath A(0)\cdot y} \eta_{r}(y)w(y)|^{2}}{|x-y|^{3+2s}} dx dy \nonumber \\
&=[\eta_{r} w]^{2}
+\iint_{\R^{6}} \frac{\eta_{r}^2(y)w^2(y) |e^{\imath [A_{\e}(\frac{x+y}{2})-A(0)]\cdot (x-y)}-1|^{2}}{|x-y|^{3+2s}} dx dy\\
&\quad+2\Re \iint_{\R^{6}} \frac{(\eta_{r}(x)w(x)-\eta_{r}(y)w(y))\eta_{r}(y)w(y)(1-e^{-\imath [A_{\e}(\frac{x+y}{2})-A(0)]\cdot (x-y)})}{|x-y|^{3+2s}} dx dy \\
&=: [\eta_{r} w]^{2}+X_{\e}+2Y_{\e}.
\end{align*}
Since 
$|Y_{\e}|\leq [\eta_{r} w] \sqrt{X_{\e}}$, it s enough to show that	$X_{\e}\rightarrow 0$ as $\e\rightarrow 0$ to deduce that \eqref{limwr} holds.\\
Let us note that, for $0<\beta<\alpha/({1+\alpha-s})$, it holds
\begin{equation}\label{Ye}
\begin{split}
X_{\e}&\leq \int_{\R^{3}} w^{2}(y) dy \int_{|x-y|\geq\e^{-\beta}} \frac{|e^{\imath [A_{\e}(\frac{x+y}{2})-A(0)]\cdot (x-y)}-1|^{2}}{|x-y|^{3+2s}} dx\\
&+\int_{\R^{3}} w^{2}(y) dy  \int_{|x-y|<\e^{-\beta}} \frac{|e^{\imath [A_{\e}(\frac{x+y}{2})-A(0)]\cdot (x-y)}-1|^{2}}{|x-y|^{3+2s}} dx \\
&=:X^{1}_{\e}+X^{2}_{\e}.
\end{split}
\end{equation}
Since $|e^{\imath t}-1|^{2}\leq 4$ and recalling that $w\in H^{s}_{\mu}$, we can see that 
\begin{equation}\label{Ye1}
X_{\e}^{1}\leq C \int_{\R^{3}} w^{2}(y) dy \int_{\e^{-\beta}}^\infty \rho^{-1-2s} d\rho\leq C \e^{2\beta s} \rightarrow 0.
\end{equation}
Concerning $X^{2}_{\e}$, since $|e^{\imath t}-1|^{2}\leq t^{2}$ for all $t\in \R$, $A\in C^{0,\alpha}(\R^3,\R^3)$ for $\alpha\in(0,1]$, and $|x+y|^{2}\leq 2(|x-y|^{2}+4|y|^{2})$,	 we have
\begin{equation}\label{Ye2}
\begin{split}
X^{2}_{\e}&
\leq \int_{\R^{3}} w^{2}(y) dy  \int_{|x-y|<\e^{-\beta}} \frac{|A_{\e}\left(\frac{x+y}{2}\right)-A(0)|^{2} }{|x-y|^{3+2s-2}} dx \\
&\leq C\e^{2\alpha} \int_{\R^{3}} w^{2}(y) dy  \int_{|x-y|<\e^{-\beta}} \frac{|x+y|^{2\alpha} }{|x-y|^{3+2s-2}} dx \\
&\leq C\e^{2\alpha} \left(\int_{\R^{3}} w^{2}(y) dy  \int_{|x-y|<\e^{-\beta}} \frac{1 }{|x-y|^{3+2s-2-2\alpha}} dx\right.\\
&\qquad\qquad+ \left. \int_{\R^{3}} |y|^{2\alpha} w^{2}(y) dy  \int_{|x-y|<\e^{-\beta}} \frac{1}{|x-y|^{3+2s-2}} dx\right) \\
&=: C\e^{2\alpha} (X^{2, 1}_{\e}+X^{2, 2}_{\e}).
\end{split}
\end{equation}	
Hence
\begin{equation}\label{Ye21}
X^{2, 1}_{\e}
= C  \int_{\R^{3}} w^{2}(y) dy \int_0^{\e^{-\beta}} \rho^{1+2\alpha-2s} d\rho
\leq C\e^{-2\beta(1+\alpha-s)}.
\end{equation}
On the other hand, using \eqref{remdecay}, we infer that
\begin{equation}\label{Ye22}
\begin{split}
 X^{2, 2}_{\e}
 &\leq C  \int_{\R^{3}} |y|^{2\alpha} w^{2}(y) dy \int_0^{\e^{-\beta}}\rho^{1-2s} d\rho  \\
&\leq C \e^{-2\beta(1-s)} \left[\int_{B_1(0)}  w^{2}(y) dy + \int_{B_1^c(0)} \frac{1}{|y|^{2(3+2s)-2\alpha}} dy \right]  \\
&\leq C \e^{-2\beta(1-s)}.
\end{split}
\end{equation}
Taking into account \eqref{Ye}, \eqref{Ye1}, \eqref{Ye2}, \eqref{Ye21} and \eqref{Ye22} we can conclude that $X_{\e}\rightarrow 0$.\\
Now, in view of condition $(V)$, there exists  $\e_{0}>0$ such that 
\begin{equation}\label{tv20}
V_{\e} (x)\leq \mu \mbox{ for all } x\in \supp(|w_{r}|), \e\in (0, \e_{0}).
\end{equation} 
Therefore, putting together \eqref{tv19} , \eqref{limwr} and \eqref{tv20}, 
we deduce that
$$
\limsup_{\e\rightarrow 0}c_{\e}\leq \limsup_{\e\rightarrow 0}\left[\max_{\tau\geq 0} J_{\e}(\tau (t_{r} w_{r}))\right]\leq \max_{\tau\geq 0} E_{\mu}(\tau (t_{r} |w_{r}|))=E_{\mu}(t_{r}|w_{r}|)<m_{V_{\infty}}
$$ 
which implies that $c_{\e}<m_{V_{\infty}}$ for any $\e>0$ sufficiently small.\\
Secondly, we assume that $V_{\infty}=\infty$. Then, using Lemma \ref{Sembedding}, we know that $(|u_{n}|)$ strongly converges in $L^{r}(\R^{3}, \R)$ for all $r\in [2, \2)$. Arguing as in the last part of Lemma \ref{PSc} we can deduce that $u_{n}\rightarrow u$ in $\h$, and consequently $J_{\e}(u)=c_{\e}$ and $J'_{\e}(u)=0$, where $u\in \h$ is the weak limit of $u_{n}$.
\end{proof}

\section{Proof of Theorem \ref{thm1}}
This last section is devoted to the proof of the main result of this work. Indeed, we apply the Ljusternik-Schnirelmann category theory to obtain multiple solutions to \eqref{Pe}. 
In particular, we relate the number of solutions of \eqref{Pe} to the topology of the set $M$.

Firstly, we fix some notation and prove some preliminary lemmas.

Let $\eta\in C^{\infty}_{0}(\R_{+}, [0, 1])$ be such that $\eta(t)=1$ if $0\leq t\leq \frac{1}{2}$ and $\eta(t)=0$ if $t\geq 1$.
For any $y\in M$, we introduce (see \cite{AD})
$$
\Psi_{\e, y}(x)=\eta(|\e x-y|) w\left(\frac{\e x-y}{\e}\right)e^{\imath \tau_{y} \left( \frac{\e x-y}{\e} \right)},
$$
where $\tau_{y}(x)=\sum_{j=1}^{3} A_{j}(y)x_{j}$ and $w\in H^{s}_{V_{0}}$ is a positive ground state solution to  autonomous problem \eqref{AP0} (see Lemma \ref{FS}), and
denote by $t_{\e}>0$ the unique number satisfying 
$$
\max_{t\geq 0} J_{\e}(t \Psi_{\e, y})=J_{\e}(t_{\e} \Psi_{\e, y}). 
$$
Let $\Phi_{\e}: M\rightarrow \N_{\e}$ be given by
$$
\Phi_{\e}(y)= t_{\e} \Psi_{\e, y}.
$$
\begin{lem}\label{lem3.4}
The functional $\Phi_{\e}$ satisfies the following limit
\begin{equation*}
\lim_{\e\rightarrow 0} J_{\e}(\Phi_{\e}(y))=m_{V_{0}} \mbox{ uniformly in } y\in M.
\end{equation*}
\end{lem}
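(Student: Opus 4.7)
The plan is to reduce everything to the autonomous problem at $V_0$ via the natural change of variable $z = x - y/\e$, show that both the Nehari scaling $t_\e$ and the energy $J_\e(t_\e\Psi_{\e,y})$ converge to their autonomous analogues, and exploit compactness of $M$ to upgrade pointwise convergence to uniform convergence.

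First I would compute the building blocks. After the substitution $x = y/\e + z$, one has $|\Psi_{\e,y}|(x) = \eta(\e|z|)w(z) =: w_\e(z)$ (independent of $y$!) and $V_\e(x) = V(y+\e z)$. Hence
\begin{align*}
\int_{\R^3}V_\e|\Psi_{\e,y}|^2\,dx &= \int_{\R^3} V(y+\e z)\,\eta^2(\e|z|)\,w^2(z)\,dz,\\
|\Psi_{\e,y}|_p^p &= |w_\e|_p^p,\qquad p\in[2,\2],
\end{align*}
and the same for $\int F(|\Psi_{\e,y}|^2)\,dx$. Since $M$ is compact (by $(V)$), $V$ is continuous, and $w$ decays polynomially by \eqref{remdecay}, dominated convergence gives $\int V_\e|\Psi_{\e,y}|^2\,dx\to V_0|w|_2^2$ uniformly in $y\in M$, and $|\Psi_{\e,y}|_p^p\to|w|_p^p$ uniformly by the usual Lebesgue/Sobolev argument applied to $w_\e$.

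Next, the magnetic Gagliardo term. After the same change of variable,
\[
[\Psi_{\e,y}]_{A_\e}^2 = \iint_{\R^6}\frac{|w_\e(z)e^{iA(y)\cdot(z-\xi)} - w_\e(\xi)e^{i(z-\xi)\cdot A(y+\e\frac{z+\xi}{2})}|^2}{|z-\xi|^{3+2s}}\,dz\,d\xi.
\]
Writing the numerator as $[w_\e(z)-w_\e(\xi)]e^{iA(y)\cdot(z-\xi)} + w_\e(\xi)[e^{iA(y)\cdot(z-\xi)} - e^{i(z-\xi)\cdot A(y+\e\frac{z+\xi}{2})}]$ and applying the triangle inequality in $L^2(|z-\xi|^{-3-2s}dz\,d\xi)$, I would bound
\[
\bigl|[\Psi_{\e,y}]_{A_\e} - [w_\e]\bigr|^2 \leq \iint_{\R^6}\frac{w_\e^2(\xi)\,|1-e^{i(z-\xi)\cdot[A(y+\e\frac{z+\xi}{2})-A(y)]}|^2}{|z-\xi|^{3+2s}}\,dz\,d\xi.
\]
This is precisely the quantity $X_\e$ analyzed in the proof of Theorem \ref{AMlem1}: splitting the domain at $|z-\xi|=\e^{-\beta}$ and using $|e^{it}-1|^2\le\min\{4,t^2\}$, the H\"older continuity of $A$, the bound $|y+\e\tfrac{z+\xi}{2}|^{2\alpha}\leq C(|y|^{2\alpha}+\e^{2\alpha}|z-\xi|^{2\alpha}+\e^{2\alpha}|\xi|^{2\alpha})$, the polynomial decay of $w$ and compactness of $M$ (which bounds $|A(y)|$ uniformly) yield a bound of order $\e^{2\beta s}+\e^{2\alpha-2\beta(1+\alpha-s)}+\e^{2\alpha-2\beta(1-s)}$, which tends to $0$ for $\beta\in(0,\alpha/(1+\alpha-s))$ uniformly in $y\in M$. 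Combined with $[w_\e]\to[w]$, I obtain $[\Psi_{\e,y}]_{A_\e}^2\to[w]^2$ uniformly in $y\in M$. I expect this step to be the main technical obstacle.

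Step three is to pin down $t_\e$. The Nehari identity for $t_\e\Psi_{\e,y}\in\N_\e$ reads
\[
t_\e^{-2}\|\Psi_{\e,y}\|_\e^2 + b[\Psi_{\e,y}]_{A_\e}^4 = \int_{\R^3}\frac{f(t_\e^2|\Psi_{\e,y}|^2)}{t_\e^2|\Psi_{\e,y}|^2}|\Psi_{\e,y}|^4\,dx + t_\e^{\2-4}|\Psi_{\e,y}|_{\2}^{\2}.
\]
Using $(f_1)$--$(f_2)$ to rule out $t_\e\to 0$ (the right-hand side would be $o(t_\e^{-2})$ while the left is bounded below by $r^2 t_\e^{-2}$ via \eqref{uNr}) and $\2>4$ together with $(f_2)$ to rule out $t_\e\to\infty$, I conclude that $t_\e$ stays in a compact interval $[T_1,T_2]\subset(0,\infty)$ uniformly in $y\in M$. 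Along any subsequence $\e_n\to 0$, $y_n\to y_0\in M$ (compactness of $M$) with $t_{\e_n}\to t_0$, passing to the limit in the Nehari identity using Step 1--2 and the dominated convergence together with $(f_4)$ gives $t_0 w\in\M_{V_0}$; by uniqueness (Lemma \ref{lem2.2HZ} applied to the autonomous problem) $t_0=1$. Hence $t_\e\to 1$ uniformly in $y\in M$.

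Finally, writing out $J_\e(t_\e\Psi_{\e,y})$ explicitly in terms of the quantities handled in Steps 1--2 and using the uniform convergence $t_\e\to 1$, each summand converges uniformly in $y\in M$ to the corresponding term in $E_{V_0}(w)$; since $E_{V_0}(w)=m_{V_0}$, this gives $\lim_{\e\to 0}J_\e(\Phi_\e(y))=m_{V_0}$ uniformly in $y\in M$, as required.
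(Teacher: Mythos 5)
Your proposal is correct and follows essentially the same route as the paper: the paper merely packages the uniformity as a contradiction argument along sequences $\e_n\to0$, $(y_n)\subset M$, cites Lemma $4.1$ of \cite{AD} for the convergence $\|\Psi_{\e_n,y_n}\|^2_{\e_n}\to\|w\|^2_{V_0}$ (which you re-derive by adapting the $X_\e$ estimate from Theorem \ref{AMlem1}), and then establishes $t_{\e_n}\to1$ and passes to the limit exactly as you do. The only blemish is your displayed bound $|y+\e\tfrac{z+\xi}{2}|^{2\alpha}\leq C(|y|^{2\alpha}+\cdots)$, which should instead be the H\"older increment estimate $|A(y+\e\tfrac{z+\xi}{2})-A(y)|^{2}\leq C\e^{2\alpha}|\tfrac{z+\xi}{2}|^{2\alpha}\leq C\e^{2\alpha}(|z-\xi|^{2\alpha}+|\xi|^{2\alpha})$, automatically uniform in $y$; your exponents and conclusion are unaffected.
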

\begin{proof}
Assume by contradiction that there exist $\delta_{0}>0$, $(y_{n})\subset M$ and $\e_{n}\rightarrow 0$ such that 
\begin{equation}\label{puac}
|J_{\e_{n}}(\Phi_{\e_{n}}(y_{n}))-m_{V_{0}}|\geq \delta_{0}.
\end{equation}
Applying Lemma $4.1$ in \cite{AD} and the Dominated Convergence Theorem we can observe that 
\begin{align}\begin{split}\label{nio3}
&\| \Psi_{\e_{n}, y_{n}} \|^{2}_{\e_{n}}\rightarrow \|w\|^{2}_{V_{0}}\in (0, \infty). 
\end{split}\end{align}
Now we prove that $t_{\e_{n}}\rightarrow 1$. Indeed, using $\langle J'_{\e_{n}}(\Phi_{\e_{n}}(y_{n})),\Phi_{\e_{n}}(y_{n})\rangle=0$ and \eqref{uNr} we get
\begin{align}\label{1nio}
r^{2} &\leq t_{\e_{n}}^{2}\|\Psi_{\e_{n}, y_{n}}\|_{\e_{n}}^{2} +b t_{\e_{n}}^{4} [\Psi_{\e_{n}, y_{n}}]^{4}_{A_{\e_{n}}} \nonumber \\
&=\int_{\R^{3}} f(|t_{\e_{n}} \Psi_{\e_{n}, y_{n}}|^{2}) \, |t_{\e_{n}} \Psi_{\e_{n}, y_{n}}|^{2} \, dx+ t_{\e_{n}}^{\2} |\Psi_{\e_{n}, y_{n}}|^{\2}_{\2}, 
\end{align}
which together with $(f_{1})$-$(f_{2})$ implies that $t_{\e_{n}}\nrightarrow 0$, so that $t_{\e_{n}}\geq t_{0}>0$ for some $t_{0}>0$. \\
If $t_{\e_{n}}\rightarrow \infty$, then we can see 
\begin{align}\label{nioo}
\frac{1}{t_{\e_{n}}^{2}} \|\Psi_{\e_{n}, y_{n}}\|_{\e_{n}}^{2}+ b[\Psi_{\e_{n}, y_{n}}]^{4}_{A_{\e_{n}}} 
&=\int_{\R^{3}} \frac{f(|t_{\e_{n}}\Psi_{\e_{n},y_{n}}|^{2})}{|t_{\e_{n}}\Psi_{\e_{n}, y_{n}}|^{2}}  |\Psi_{\e_{n}, y_{n}}|^{4}dx + t_{\e_{n}}^{\2-4} |\Psi_{\e_{n}, y_{n}}|^{\2}_{\2}\nonumber \\
&> \int_{B_{\frac{1}{2}}(0)} \frac{f(|t_{\e_{n}}\eta(|\e_{n}z|)w(z)|^{2})}{|t_{\e_{n}}\eta(|\e_{n}z|)w(z)|^{2}} (\eta(|\e_{n}z|)w(z))^{4}dz \nonumber \\
&=\int_{B_{\frac{1}{2}}(0)} \frac{f(t^{2}_{\e_{n}}w(z)^{2})}{t^{2}_{\e_{n}}w(z)^{2}} w(z)^{4}dz \nonumber \\
&\geq \frac{f(t^{2}_{\e_{n}}w(\hat{z})^{2})}{t_{\e_{n}}^{2}w(\hat{z})^{2}}w(\hat{z})^{4} |B_{\frac{1}{2}}(0)|, 
\end{align}
where
\begin{equation*}
w(\hat{z}):=\min_{z\in \overline{B}_{\frac{1}{2}}(0)} w(z)>0.
\end{equation*} 
This fact together with \eqref{nio3}, \eqref{nioo} and $(f_{4})$ yields 
$$
b[w]^{4}=\infty,
$$
that is a contradiction. Hence, $0<t_{0}\leq t_{\e_{n}} \leq C$, and we may assume that $t_{\e_{n}}\rightarrow T>0$. Now we show that $T=1$.  
Let us observe that by the Dominated Convergence Theorem we can see 
\begin{align*}
&\lim_{n\rightarrow \infty} \int_{\R^{3}} F(|\Psi_{\e_{n}, y_{n}}|^{2}) = \int_{\R^{3}} F(w^{2}), \\
&\lim_{n\rightarrow \infty} \int_{\R^{3}} f(|\Psi_{\e_{n}, y_{n}}|^{2})\, |\Psi_{\e_{n}, y_{n}}|^{2} = \int_{\R^{3}} f(w^{2}) w^{2}, \\
&\lim_{n\rightarrow \infty} |\Psi_{\e_{n}, y_{n}}|_{\2}^{\2} = |w|_{\2}^{\2}. 
\end{align*}
Therefore, taking the limit as $n\rightarrow \infty$ in \eqref{1nio}, we can deduce that
\begin{align*}
\frac{1}{T^{2}}\|w\|^{2}_{V_{0}}+b[w]^{4}=\int_{\R^{3}} \frac{f((T w)^{2})}{(Tw)^{2}} \,w^{4} + T^{\2-4} |w|_{\2}^{\2}.
\end{align*}
Since $w\in \M_{V_{0}}$ and $\frac{f(t)}{t}$ is increasing by $(f_4)$, we can infer that $T=1$.
Letting the limit as $n\rightarrow \infty$ and using $t_{\e_{n}}\rightarrow 1$ 
we can conclude that
$$
\lim_{n\rightarrow \infty} J_{\e_{n}}(\Phi_{\e_{n}, y_{n}})=E_{V_{0}}(w)=m_{V_{0}},
$$
which gives a contradiction in view of \eqref{puac}.
\end{proof}

\noindent
For any $\delta>0$, we take $\rho=\rho(\delta)>0$ such that $M_{\delta}\subset B_{\rho}(0)$. Define $\varUpsilon: \R^{3}\rightarrow \R^{3}$ as follows:
\begin{equation*}
\varUpsilon(x)=
\left\{
\begin{array}{ll}
x &\mbox{ if } |x|<\rho \\
\frac{\rho x}{|x|} &\mbox{ if } |x|\geq \rho.
\end{array}
\right.
\end{equation*}
Let us consider the barycenter map $\beta_{\e}: \N_{\e}\rightarrow \R^{3}$ given by
\begin{align*}
\beta_{\e}(u)=\frac{\displaystyle{\int_{\R^{3}} \varUpsilon(\e x)|u(x)|^{2} \,dx}}{\displaystyle{\int_{\R^{3}} |u(x)|^{2} \,dx}}.
\end{align*}
Then, we can prove the following result:
\noindent
\begin{lem}\label{lem3.5N}
The function $\beta_{\e}$ verifies the following limit:
\begin{equation*}
\lim_{\e \rightarrow 0} \beta_{\e}(\Phi_{\e}(y))=y \mbox{ uniformly in } y\in M.
\end{equation*}
\end{lem}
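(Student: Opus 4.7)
The plan is to argue by contradiction and reduce the claim to a direct computation via a change of variables. Suppose the conclusion fails: then there exist $\delta_0 > 0$, a sequence $\e_n \to 0^+$, and points $(y_n) \subset M$ such that
\begin{equation*}
|\beta_{\e_n}(\Phi_{\e_n}(y_n)) - y_n| \geq \delta_0 \quad \text{for all } n.
\end{equation*}
Since $M$ is compact, I may pass to a subsequence and assume $y_n \to y_0 \in M$. Because $M \subset B_{\rho}(0)$ and $\varUpsilon$ is continuous with $\varUpsilon(y) = y$ on $B_{\rho}(0)$, this will already isolate the role of the displacement inside the barycenter integral.

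The next step is to perform the substitution $z = x - y_n/\e_n$ in both integrals defining $\beta_{\e_n}$, using that $|\Phi_{\e_n}(y_n)(x)|^2 = t_{\e_n}^2 \eta(|\e_n x - y_n|)^2 w((\e_n x-y_n)/\e_n)^2$ (the magnetic phase disappears upon taking the modulus). The $t_{\e_n}^2$ factor cancels between numerator and denominator, yielding
\begin{equation*}
\beta_{\e_n}(\Phi_{\e_n}(y_n)) - y_n
= \frac{\displaystyle\int_{\R^{3}} \big[\varUpsilon(\e_n z + y_n) - y_n\big]\, \eta(|\e_n z|)^{2} w(z)^{2}\, dz}{\displaystyle\int_{\R^{3}} \eta(|\e_n z|)^{2} w(z)^{2}\, dz}.
\end{equation*}

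I would then apply the Dominated Convergence Theorem to both integrals. For the denominator, $\eta(|\e_n z|)^2 w(z)^2 \to w(z)^2$ pointwise in $\R^3$, is dominated by $w^2 \in L^1(\R^3)$, and hence converges to $|w|_{2}^{2} > 0$. For the numerator, $\varUpsilon(\e_n z + y_n) \to \varUpsilon(y_0) = y_0$ pointwise, while $|\varUpsilon(\e_n z + y_n) - y_n|$ is uniformly bounded by $2\rho$, so the integrand is dominated by $2\rho\, w(z)^2 \in L^1(\R^3)$. Therefore the numerator converges to $(y_0 - y_0)\,|w|_{2}^{2} = 0$.

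Combining these, $\beta_{\e_n}(\Phi_{\e_n}(y_n)) - y_n \to 0$, contradicting $|\beta_{\e_n}(\Phi_{\e_n}(y_n)) - y_n| \geq \delta_0$. The only mildly delicate point is to check that $t_{\e_n}$ plays no role (it cancels identically), and to verify the dominations — both are routine consequences of the compactness of $M$, the boundedness of $\varUpsilon$ on $\R^3$, and the integrability of $w^2$; no truly hard step arises, which is why the lemma essentially reduces to the translation and rescaling invariance built into the definition of $\Phi_\e$.
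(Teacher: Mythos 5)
Your proposal is correct and follows essentially the same route as the paper: contradiction, the change of variables $z=\frac{\e_n x-y_n}{\e_n}$ (under which the magnetic phase and the factor $t_{\e_n}^2$ cancel), and the Dominated Convergence Theorem applied to numerator and denominator. The only cosmetic difference is that you extract a convergent subsequence $y_n\to y_0$ to justify the pointwise limit, whereas the paper uses directly that $M_\delta\subset B_\rho(0)$ forces $\varUpsilon(\e_n z+y_n)-y_n=\e_n z\to 0$ for each fixed $z$; both are fine.
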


\begin{proof}
Assume by contradiction that there are $k>0$, $(y_{n})\subset M$ and $\e_{n}\rightarrow 0$ such that
\begin{align}\label{4.7AdA}
|\beta_{\e_{n}} (\Phi_{\e_{n}} (y_{n}))- y_{n}|\geq k. 
\end{align}
Using the change of variable $z= \frac{\e_{n}x- y_{n}}{\e_{n}}$, we have 
\begin{align*}
\beta_{\e_{n}}(\Phi_{\e_{n}}(y_{n})) = y_{n} + \frac{\int_{\R^{3}} [\Upsilon(\e_{n}z+y_{n})-y_{n}] |\eta(|\e_{n}z|)|^{2} |w(z)|^{2} dz}{\int_{\R^{3}} |\eta(|\e_{n} z|)|^{2} |w(z)|^{2}\, dz}. 
\end{align*}
Thus, by $(y_{n})\subset M\subset M_{\delta} \subset B_{\rho}(0)$ and applying the Dominated Convergence Theorem we get
\begin{align*}
|\beta_{\e_{n}} (\Phi_{\e_{n}}(y_{n})) - y_{n}|=o_{n}(1)
\end{align*}
which contradicts \eqref{4.7AdA}. 
\end{proof}

\noindent
Next, we establish the following technical result:
\begin{lem}\label{prop3.3}
Let $\e_{n}\rightarrow 0$ and $(u_{n})\subset \mathcal{N}_{\e_{n}}$ be such that $J_{\e_{n}}(u_{n})\rightarrow m_{V_{0}}$. Then there exists $(\tilde{y}_{n})\subset \R^{3}$ such that $v_{n}(x)=|u_{n}|(x+\tilde{y}_{n})$ has a convergent subsequence in $H^{s}_{V_{0}}$. Moreover, up to a subsequence, $y_{n}=\e_{n} \tilde{y}_{n}\rightarrow y_{0}$ for some $y_{0}\in M$.
\end{lem}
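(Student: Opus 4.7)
\medskip
\noindent\textbf{Proof proposal.}
The plan is to run a concentration--compactness analysis on $(|u_n|)$ in the spirit of Lemma \ref{PSc} and the arguments of Lemma \ref{FS}, produce via non-vanishing a translated sequence $v_n$ that is essentially a minimizing sequence on $\mathcal{M}_{V_0}$, and then appeal to Lemma \ref{FSC} to upgrade to strong convergence in $H^{s}_{V_0}$. Finally, I will exploit the strict monotonicity of $\mu\mapsto m_\mu$ in the minimax level to locate $y_n=\e_n\tilde y_n$ inside $M$.

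\smallskip
\noindent\emph{Step 1 (uniform bound and non-vanishing).} The same computation as in Lemma \ref{lem2.3HZ}, applied with $c=m_{V_0}$ and using that $(u_n)\subset\N_{\e_n}$ implies $\langle J_{\e_n}'(u_n),u_n\rangle=0$ for free, gives $\sup_n\|u_n\|_{\e_n}<\infty$. By the diamagnetic inequality (Lemma \ref{DI}), $(|u_n|)$ is bounded in $H^{s}(\R^{3},\R)$. To rule out vanishing, suppose by contradiction that $\sup_{y\in\R^3}\int_{B_R(y)}|u_n|^2\,dx\to 0$ for every $R>0$. By Lemma \ref{Lions}, $|u_n|\to 0$ in $L^{p}(\R^3,\R)$ for every $p\in(2,\2)$, hence by $(f_1)$--$(f_2)$ both $\int F(|u_n|^2)\,dx$ and $\int f(|u_n|^2)|u_n|^2\,dx$ tend to zero. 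The Nehari identity then yields $\|u_n\|_{\e_n}^2+b[u_n]_{A_{\e_n}}^4=|u_n|_{\2}^{\2}+o_n(1)$, and repeating the chain of inequalities from Step 1 of Lemma \ref{PSc} (via diamagnetic $+$ $S_*$ $+$ the algebraic lemma $\mathcal{L}\geq T$) gives $m_{V_0}\geq c_*$, contradicting Lemma \ref{lem2.6HZ}. Hence there exist $(\tilde y_n)\subset\R^3$ and $R,\beta>0$ with $\liminf_n\int_{B_R(\tilde y_n)}|u_n|^2\,dx\geq\beta$.

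\smallskip
\noindent\emph{Step 2 (strong convergence in $H^{s}_{V_0}$).} Set $v_n(x):=|u_n|(x+\tilde y_n)$. By translation invariance $(v_n)$ is bounded in $H^{s}_{V_0}$ and, up to a subsequence, $v_n\rightharpoonup v$ in $H^{s}_{V_0}$ with $v\neq 0$. Combining the diamagnetic inequality with $V_{\e_n}\geq V_0$ and the Nehari relation for $u_n$,
\begin{equation*}
a[v_n]^2+V_0|v_n|_2^2+b[v_n]^4\leq\int_{\R^3}f(v_n^2)v_n^2\,dx+|v_n|_{\2}^{\2},
\end{equation*}
so $\langle E_{V_0}'(v_n),v_n\rangle\leq 0$. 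Arguing exactly as in Case 1 of Lemma \ref{PSc}, I find $t_n\in(0,1]$ with $t_nv_n\in\M_{V_0}$. Using that $\tau\mapsto\tfrac{1}{4}f(\tau)\tau-\tfrac{1}{2}F(\tau)$ is nondecreasing (consequence of $(f_3)$--$(f_4)$) and $s>3/4$ (so the $\tfrac{4s-3}{12}|\cdot|_{\2}^{\2}$ contribution is nonnegative), together with the diamagnetic inequality,
\begin{align*}
m_{V_0}\leq E_{V_0}(t_nv_n)&= E_{V_0}(t_nv_n)-\tfrac{1}{4}\langle E_{V_0}'(t_nv_n),t_nv_n\rangle\\
&\leq \tfrac{1}{4}\|v_n\|_{V_0}^2+\int_{\R^3}\Bigl[\tfrac{1}{4}f(v_n^2)v_n^2-\tfrac{1}{2}F(v_n^2)\Bigr]dx+\tfrac{4s-3}{12}|v_n|_{\2}^{\2}\\
&\leq J_{\e_n}(u_n)-\tfrac{1}{4}\langle J_{\e_n}'(u_n),u_n\rangle\longrightarrow m_{V_0}.
\end{align*}
Thus $(t_nv_n)\subset\M_{V_0}$ is a minimizing sequence, and Lemma \ref{FSC} applies. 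Either $t_nv_n$ converges strongly in $H^{s}_{V_0}$, or there exist $(\hat z_n)\subset\R^3$ with $t_nv_n(\cdot+\hat z_n)$ strongly convergent; in the latter case $|\hat z_n|\to\infty$ would force $v_n(\cdot+\hat z_n)\rightharpoonup 0$ (weak-$L^2$), contradicting that the strong limit is a nonzero element of $\M_{V_0}$. Hence $(\hat z_n)$ is bounded, and redefining $\tilde y_n\mapsto\tilde y_n-\hat z_n$ I may assume $v_n\to v$ strongly in $H^{s}_{V_0}$, with $t_n\to 1$.

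\smallskip
\noindent\emph{Step 3 (boundedness of $y_n$ and location of the limit).} Let $y_n:=\e_n\tilde y_n$. Suppose first that $|y_n|\to\infty$ along a subsequence. For fixed $x$, $|\e_nx+y_n|\to\infty$, so by $(V)$, $\liminf_n V(\e_nx+y_n)\geq V_\infty$ pointwise a.e.\ (and the argument still works formally when $V_\infty=\infty$, yielding an even stronger conclusion via Theorem \ref{Sembedding}). Using the strong $L^2$ and $L^{\2}$ convergence of $v_n\to v$, Fatou, and the weak lower semicontinuity of the Gagliardo seminorm, passing to the $\liminf$ in the Nehari identity for $u_n$ gives $\langle E_{V_\infty}'(v),v\rangle\leq 0$. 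Reproducing the inequality chain of Step 2 in the space $H^{s}_{V_\infty}$, I obtain $m_{V_\infty}\leq m_{V_0}$, which contradicts the strict inequality $m_{V_0}<m_{V_\infty}$ used in the proof of Theorem \ref{AMlem1}. Therefore $(y_n)$ is bounded; up to a subsequence, $y_n\to y_0\in\R^3$. If $V(y_0)>V_0$, the identical argument with $V_\infty$ replaced by $V(y_0)$ yields $m_{V(y_0)}\leq m_{V_0}$, again a contradiction. Hence $V(y_0)=V_0$, i.e.\ $y_0\in M$.

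\smallskip
\noindent\textbf{Main obstacle.} The delicate point is Step 2: starting from the magnetic Kirchhoff functional $J_{\e_n}$, I only obtain \emph{inequalities} comparing it to the autonomous $E_{V_0}$ via the diamagnetic bound, so I cannot directly use that $v_n$ itself lies on $\M_{V_0}$. The trick is to move to $t_nv_n\in\M_{V_0}$ with $t_n\le 1$ and to observe that the ``good'' part $J_{\e_n}-\tfrac{1}{4}\langle J_{\e_n}',\cdot\rangle$ has a pointwise nonnegative integrand (thanks to $(f_3)$--$(f_4)$ and $s>3/4$), which is exactly what makes the one-sided diamagnetic comparison sharp enough to force $E_{V_0}(t_nv_n)\to m_{V_0}$; the Kirchhoff term $b[u_n]_{A_{\e_n}}^{4}$ cancels between functional and derivative and does not interfere. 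Once this is achieved, Lemma \ref{FSC} does the heavy lifting.
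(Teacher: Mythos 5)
Your proposal is correct and follows essentially the same route as the paper: uniform bound plus non-vanishing (ruled out via Lemma \ref{Lions} and the threshold $c_*$ of Lemma \ref{lem2.6HZ}), projection of $v_n$ onto $\mathcal{M}_{V_0}$ via the diamagnetic inequality followed by Lemma \ref{FSC}, and finally the comparison of $m_{V_0}$ with $m_{V_\infty}$ (resp. $m_{V(y_0)}$) to bound $y_n$ and force $y_0\in M$. The only cosmetic difference is in Step 2, where the paper deduces $E_{V_0}(t_nv_n)\leq \max_{t\geq 0}J_{\e_n}(tu_n)=J_{\e_n}(u_n)$ directly from $E_{V_0}(t|u_n|)\leq J_{\e_n}(tu_n)$ and $u_n\in\mathcal{N}_{\e_n}$, while you route through $t_n\leq 1$ and the monotone quantity $J_{\e_n}-\tfrac14\langle J_{\e_n}',\cdot\rangle$; both work, and your unjustified claim $t_n\to 1$ is harmless since $t_n\to t^*>0$ already yields $v_n\to \tilde v/t^*$ strongly.
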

\begin{proof}
Taking into account $\langle J'_{\e_{n}}(u_{n}), u_{n}\rangle=0$, $J_{\e_{n}}(u_{n})= c_{V_{0}}+o_{n}(1)$, $c_{V_{0}}>0$,  and arguing as in Lemma \ref{lem2.3HZ}, we can see that $\|u_{n}\|_{\e_{n}}\leq C$ for all $n\in \mathbb{N}$ and $\|u_{n}\|_{\e_{n}} \nrightarrow 0$. Moreover, from Lemma \ref{DI}, we also know that $(|u_{n}|)$ is bounded in $H^{s}_{V_{0}}$. Then, proceeding as in the first part of Lemma \ref{PSc}, we can find a sequence $(\tilde{y}_{n})\subset \R^{3}$, and constants $R>0$ and $\beta>0$ such that
\begin{equation}\label{sacchi}
\liminf_{n\rightarrow \infty}\int_{B_{R}(\tilde{y}_{n})} |u_{n}|^{2} \, dx\geq \beta>0.
\end{equation}
Put $v_{n}(x)=|u_{n}|(x+\tilde{y}_{n})$. Hence, $(v_{n})$ is bounded in $H^{s}_{V_{0}}$ and we may assume that 
$v_{n}\rightharpoonup v\not\equiv 0$ in $H^{s}_{V_{0}}$  as $n\rightarrow \infty$.
Fix $t_{n}>0$ such that $\tilde{v}_{n}=t_{n} v_{n}\in \mathcal{M}_{V_{0}}$. Using Lemma \ref{DI}, we can deduce that 
$$
m_{V_{0}}\leq E_{V_{0}}(\tilde{v}_{n})\leq \max_{t\geq 0}J_{\e_{n}}(tu_{n})= J_{\e_{n}}(u_{n})= m_{V_{0}}+ o_{n}(1)
$$
which implies that $E_{V_{0}}(\tilde{v}_{n})\rightarrow m_{V_{0}}$. 
Since $v_{n}\nrightarrow 0$ in $H^{s}_{V_{0}}$ and $(\tilde{v}_{n})$ is bounded in $H^{s}_{V_{0}}$, we deduce that $(t_{n})$ is bounded in $\R$, and $t_{n}\rightarrow t^{*}\geq 0$. Indeed $t^{*}>0$, otherwise, if $t^{*}=0$, then $\tilde{v}_{n}\rightarrow 0$  in $H^{s}_{V_{0}}$ and $E_{V_{0}}(\tilde{v}_{n})\rightarrow 0$ which is impossible because $m_{V_{0}}>0$. From the uniqueness of the weak limit, we can deduce that $\tilde{v}_{n}\rightharpoonup \tilde{v}=t^{*}v\not\equiv 0$ in $H^{s}_{V_{0}}$. 
This combined with Lemma \ref{FSC} implies that
\begin{equation}\label{elena}
\tilde{v}_{n}\rightarrow \tilde{v} \mbox{ in } H^{s}_{V_{0}}.
\end{equation} 
Consequently, $v_{n}\rightarrow v$ in $H^{s}_{V_{0}}$ as $n\rightarrow \infty$.

Now, we set $y_{n}=\e_{n}\tilde{y}_{n}$ and we show that $(y_{n})$ admits a subsequence, still denoted by $y_{n}$, such that $y_{n}\rightarrow y_{0}$ for some $y_{0}\in M$. We begin proving that $(y_{n})$ is bounded. 
Assume by contradiction that, up to a subsequence, $|y_{n}|\rightarrow \infty$ as $n\rightarrow \infty$. \\
Firstly, we consider the case $V_{\infty}= \infty$. Then we can note that $(u_{n})\subset \N_{\e_{n}}$ and Lemma \ref{DI} imply
\begin{align*}
\int_{\R^{3}} V(\e_{n}x+y_{n}) v_{n}^{2} dx &\leq a [v_{n}]^{2} + \int_{\R^{3}} V(\e_{n}x+y_{n}) v_{n}^{2} dx+ \frac{b}{4} [v_{n}]^{4} \\
&\leq \int_{\R^{3}}  f(v_{n}^{2})v_{n}^{2}+ |v_{n}|^{\2} dx, 
\end{align*}
which together with Fatou's Lemma and condition $(V)$ gives 
\begin{align*}
\infty= \liminf_{n\rightarrow \infty} \int_{\R^{3}}  f(v_{n}^{2})v_{n}^{2}+ |v_{n}|^{\2} dx.
\end{align*}
This is impossible because the sequence $(f(v_{n}^{2})v_{n}^{2}+ v_{n}^{\2})$ is bounded in $L^{1}(\R^{3}, \R)$. 

Now we consider the case $V_{\infty}<\infty$. Then, using Lemma \ref{DI}, \eqref{elena} and $V_{0}<V_{\infty}$ we have 
\begin{align*}
m_{V_{0}}&=E_{V_{0}}(\tilde{v})<E_{V_{\infty}}(\tilde{v})= \frac{a}{2}[\tilde{v}]^{2}+\frac{1}{2}\int_{\R^{3}} V_{\infty} \tilde{v}^{2} \, dx+\frac{b}{4}[\tilde{v}]^{4}-\frac{1}{2}\int_{\R^{3}} F(|\tilde{v}|^{2}) dx- \frac{1}{\2} |\tilde{v}|_{\2}^{\2}\\
&\leq \liminf_{n\rightarrow \infty}\left[\frac{a}{2}[\tilde{v}_{n}]^{2}+\frac{1}{2}\int_{\R^{3}} V(\e_{n}x+y_{n}) |\tilde{v}_{n}|^{2} \, dx+\frac{b}{4}[\tilde{v}_{n}]^{4}-\frac{1}{2}\int_{\R^{3}} F(|\tilde{v}_{n}|^{2})\, dx - \frac{1}{\2} |\tilde{v}_{n}|_{\2}^{\2} \right] \\
&\leq \liminf_{n\rightarrow \infty} \left[a\frac{t_{n}^{2}}{2}[|u_{n}|]^{2}+\frac{t_{n}^{2}}{2}\int_{\R^{3}} V(\e_{n}z) |u_{n}|^{2} \, dz
+ b\frac{t_{n}^{4}}{4}[|u_{n}|]^{4}-\frac{1}{2}\int_{\R^{3}} F(|t_{n} u_{n}|^{2}) \, dz -  \frac{t_{n}^{\2}}{\2} |u_{n}|_{\2}^{\2} \right] \\
&\leq \liminf_{n\rightarrow \infty} J_{\e_{n}}(t_{n} u_{n}) \leq \liminf_{n\rightarrow \infty} J_{\e_{n}}(u_{n})= m_{V_{0}}, 
\end{align*}
which is an absurd.

Therefore, $(y_{n})$ is bounded and we may assume that $y_{n}\rightarrow y_{0}\in \R^{3}$. If $y_{0}\notin M$, then $V(y)>V_{0}$ and we get a contradiction arguing as above. Then, $y_{0}\in M$ and this concludes the proof of Lemma. 
\end{proof}

\noindent
Now, we consider the following subset of $\N_{\e}$: 
$$
\widetilde{\N}_{\e}=\left \{u\in \N_{\e}: J_{\e}(u)\leq m_{V_{0}}+h(\e)\right\},
$$
where $h:\R^{+}\rightarrow \R^{+}$ is such that $h(\e)\rightarrow 0$ as $\e \rightarrow 0$.
Fixed $y\in M$, we can see that Lemma \ref{lem3.4} yields $h(\e)=|J_{\e}(\Phi_{\e}(y))-m_{V_{0}}|\rightarrow 0$ as $\e \rightarrow 0$. Therefore $\Phi_{\e}(y)\in \widetilde{\N}_{\e}$, and $\widetilde{\N}_{\e}\neq \emptyset$ for any $\e>0$. 

\begin{lem}\label{lem3.5}
For any $\delta>0$, there holds that
$$
\lim_{\e \rightarrow 0} \sup_{u\in \widetilde{\mathcal{N}}_{\e}} {\rm dist}(\beta_{\e}(u), M_{\delta})=0.
$$
\end{lem}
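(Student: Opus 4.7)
My plan is to argue by contradiction, combining the three preparatory lemmas already established (Lemma \ref{lem3.4}, Lemma \ref{lem3.5N}, and especially Lemma \ref{prop3.3}) with a routine change of variables in the barycenter.

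Suppose the claim fails. Then there exist $\delta_{0}>0$, a sequence $\e_{n}\to 0$, and $u_{n}\in \widetilde{\N}_{\e_{n}}$ such that $\mathrm{dist}(\beta_{\e_{n}}(u_{n}),M_{\delta})\geq \delta_{0}$ for all $n$. The first step is to upgrade the one-sided bound $J_{\e_{n}}(u_{n})\leq m_{V_{0}}+h(\e_{n})$ into the full convergence $J_{\e_{n}}(u_{n})\to m_{V_{0}}$ required by Lemma \ref{prop3.3}. For the lower bound I will exploit that $u_{n}\in \N_{\e_{n}}$ maximizes $t\mapsto J_{\e_{n}}(tu_{n})$ on $[0,\infty)$ (Lemma \ref{lem2.2HZ}); choosing $t_{n}>0$ so that $t_{n}|u_{n}|\in \M_{V_{0}}$, the diamagnetic inequality (Lemma \ref{DI}) together with $V_{\e_{n}}\geq V_{0}$ yields
\[
J_{\e_{n}}(u_{n})\geq J_{\e_{n}}(t_{n}u_{n})\geq E_{V_{0}}(t_{n}|u_{n}|)\geq m_{V_{0}},
\]
so combined with the definition of $\widetilde{\N}_{\e_{n}}$ we obtain $J_{\e_{n}}(u_{n})\to m_{V_{0}}$.

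Having this, Lemma \ref{prop3.3} applies and provides $(\tilde{y}_{n})\subset \R^{3}$ such that, up to a subsequence, $v_{n}:=|u_{n}|(\cdot+\tilde{y}_{n})\to v$ strongly in $H^{s}_{V_{0}}$ with $v\not\equiv 0$, and moreover $y_{n}:=\e_{n}\tilde{y}_{n}\to y_{0}\in M$. Performing the change of variables $z=x-\tilde{y}_{n}$ in the barycenter gives
\[
\beta_{\e_{n}}(u_{n})=\frac{\displaystyle\int_{\R^{3}}\Upsilon(\e_{n}z+y_{n})\,|v_{n}(z)|^{2}\,dz}{\displaystyle\int_{\R^{3}}|v_{n}(z)|^{2}\,dz}.
\]
Since $\Upsilon$ is bounded and continuous, $\e_{n}z+y_{n}\to y_{0}$ pointwise, and $v_{n}\to v$ in $L^{2}(\R^{3},\R)$, the Dominated Convergence Theorem (using the $L^{2}$-majorization obtained from the strong convergence of $v_{n}$) yields $\beta_{\e_{n}}(u_{n})\to \Upsilon(y_{0})=y_{0}$, because $y_{0}\in M\subset M_{\delta}\subset B_{\rho}(0)$ so $\Upsilon$ acts as the identity at $y_{0}$. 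Consequently $\mathrm{dist}(\beta_{\e_{n}}(u_{n}),M_{\delta})\to 0$, contradicting the standing assumption.

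The main obstacle in this scheme is not the final change-of-variables computation (which is standard once a good translation is chosen), but rather establishing the lower bound $J_{\e_{n}}(u_{n})\geq m_{V_{0}}+o_{n}(1)$. The delicacy is that $|u_{n}|$ need not lie on the Nehari manifold $\N_{\e_{n}}$, and $\N_{\e_{n}}$ is a manifold of complex-valued functions whereas $m_{V_{0}}$ is defined on real-valued ones. The trick is to use $u_{n}\in \N_{\e_{n}}$ together with the maximality property from Lemma \ref{lem2.2HZ} so that $J_{\e_{n}}(u_{n})\geq J_{\e_{n}}(tu_{n})$ for \emph{every} $t\geq 0$, and then bridge to the real autonomous functional $E_{V_{0}}$ via Lemma \ref{DI}. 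After that, invoking Lemma \ref{prop3.3} is the key black box that transports us from an abstract minimizing sequence to concrete information about the location of the mass $|u_{n}|^{2}$, which is exactly what the barycenter sees.
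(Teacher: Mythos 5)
Your proposal is correct and follows essentially the same route as the paper: you obtain $J_{\e_n}(u_n)\to m_{V_0}$ from the diamagnetic inequality combined with the fact that $u_n\in\N_{\e_n}$ maximizes $t\mapsto J_{\e_n}(tu_n)$, you then invoke Lemma \ref{prop3.3} to get the translations $\tilde y_n$ with $\e_n\tilde y_n\to y_0\in M$ and strong $L^2$-convergence of $|u_n|(\cdot+\tilde y_n)$, and you finish with a change of variables plus Dominated Convergence in the barycenter. The only cosmetic difference is that you argue by contradiction while the paper picks a near-extremizing sequence directly; the substance is identical.
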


\begin{proof}
Let $\e_{n}\rightarrow 0$ as $n\rightarrow \infty$. For any $n\in \mathbb{N}$ there exists a sequence $(u_{n})\subset \widetilde{\N}_{\e_{n}}$ such that
\begin{align*}
\sup_{u\in \widetilde{\N}_{\e_{n}}} \inf_{y\in M_{\delta}} |\beta_{\e_{n}} (u)-y| = \inf_{y\in M_{\delta}} |\beta_{\e_{n}}(u_{n}) -y| +o_{n}(1). 
\end{align*}
Therefore, we have to prove that there exists $(y_{n})\subset M_{\delta}$ such that 
\begin{align}\label{4.10AdA}
\lim_{n\rightarrow \infty} |\beta_{\e_{n}}(u_{n})-y_{n} |=0. 
\end{align}
From Lemma \ref{DI} we deduce $E_{V_{0}}(t|u_{n}|) \leq J_{\e_{n}}(tu_{n})$ for any $t\geq 0$. Taking into account $(u_{n})\subset \widetilde{\N}_{\e_{n}}\subset \N_{\e_{n}}$, we get
\begin{align*}
m_{V_{0}}\leq \max_{t\geq 0} E_{V_{0}} (t|u_{n}|) \leq \max_{t\geq 0} J_{\e_{n}}(tu_{n}) = J_{\e_{n}}(u_{n}) \leq m_{V_{0}}+ h(\e_{n}), 
\end{align*}
which together with $h(\e_{n})\rightarrow 0$ as $n\rightarrow \infty$ implies that $J_{\e_{n}}(u_{n})\rightarrow m_{V_{0}}$. Now, by Lemma \ref{prop3.3} there exists $(\tilde{y}_{n})\subset \R^{3}$ such that $y_{n}= \e_{n}\tilde{y}_{n}\in M_{\delta}$ for $n$ sufficiently large. 
Hence, 
\begin{align*}
\beta_{\e_{n}}(u_{n}) = y_{n} + \frac{\int_{\R^{3}} [\Upsilon(\e_{n}z+y_{n}) -y_{n}] |u_{n}(z+\tilde{y}_{n})|^{2}\, dz}{ \int_{\R^{3}} |u_{n}(z+ \tilde{y}_{n})|^{2} dz}. 
\end{align*} 
Recalling that (up to a subsequence) $|u_{n}|(\cdot + \tilde{y}_{n})$ strongly converges in $H^{s}(\R^{3}, \R)$ and using $\e_{n}z+y_{n}\rightarrow y\in M$, we deduce that \eqref{4.10AdA} holds true. 
\end{proof}

\noindent
The following lemma plays a fundamental role in the study the behavior of the maximum points of solutions to \eqref{P}. This result is known in the case $A=0$ (see \cite{AM, A1, DPMV}), but here we have to overcome the presence of the magnetic field $A$. Following \cite{Acpde}, we develop a Moser iteration procedure \cite{Moser} and an approximation argument inspired by Kato's inequality \cite{Kato}.
\begin{lem}\label{moser} 
Let $\e_{n}\rightarrow 0$ and $u_{n}:=u_{\e_{n}}\in \widetilde{\mathcal{N}}_{\e_{n}}$ be a solution to \eqref{Pe}. 
Then $v_{n}=|u_{n}|(\cdot+\tilde{y}_{n})$ satisfies $v_{n}\in L^{\infty}(\R^{3},\R)$ and there exists $C>0$ such that 
$$
|v_{n}|_{\infty}\leq C \mbox{ for all } n\in \mathbb{N},
$$
where $\tilde{y}_{n}$ is given by Lemma \ref{prop3.3}.
Moreover
$$
\lim_{|x|\rightarrow \infty} v_{n}(x)=0 \mbox{ uniformly in } n\in \mathbb{N}.
$$
\end{lem}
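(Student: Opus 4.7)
The plan is to establish the lemma in three stages: a Kato-type subsolution inequality for $|u_n|$, a Moser iteration giving the uniform $L^\infty$ bound, and a comparison argument yielding the uniform decay at infinity.

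First, since the magnetic phase obstructs a direct iteration on $u_n$, I would prove that $|u_n|$ is a weak subsolution of a scalar fractional Kirchhoff inequality. Set $\mu_n := a + b[u_n]^2_{A_{\e_n}}$, which is uniformly bounded from above and away from zero because $(u_n)\subset \widetilde{\mathcal{N}}_{\e_n}$ is bounded in $\h$ (by Lemma \ref{lem2.3HZ}) and because of \eqref{uNr}. I want to establish
$$\mu_n(-\Delta)^s|u_n| + V_{\e_n}|u_n| \le f(|u_n|^2)|u_n| + |u_n|^{\2-1} \quad \text{in } \R^3.$$
The argument mimics \cite{Acpde}: introduce the regularization $u_{n,\delta}:=\sqrt{|u_n|^2+\delta^2}-\delta$, test the equation solved by $u_n$ against $\varphi\, u_n/\sqrt{|u_n|^2+\delta^2}$ for non-negative $\varphi\in C^\infty_c(\R^3,\R)$, apply the pointwise Kato-type bound
$$\Re\!\left[\bigl(u_n(x)-e^{\imath(x-y)\cdot A_{\e_n}(\frac{x+y}{2})}u_n(y)\bigr)\,\overline{\tfrac{u_n(x)}{\sqrt{|u_n(x)|^2+\delta^2}}}\right]\ge u_{n,\delta}(x)-u_{n,\delta}(y),$$
and pass to the limit $\delta\to 0^+$ by dominated convergence.

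Second, translating by $\tilde y_n$ and using $V_{\e_n}\ge V_0$, the function $v_n$ obeys
$$\mu_n(-\Delta)^s v_n + V_0 v_n \le f(v_n^2)v_n + v_n^{\2-1} \quad \text{in } \R^3.$$
I would then perform a Moser iteration using the test functions $v_n v_{n,L}^{2(\beta-1)}$ with $v_{n,L}:=\min\{v_n,L\}$ and $\beta\ge 1$. Exploiting the fractional chain-rule inequality, the Sobolev embedding $[w]^2\ge S_*|w|_{\2}^2$, conditions $(f_1)$--$(f_2)$, and the splitting
$$\int_{\R^3} v_n^{\2}(v_n v_{n,L}^{\beta-1})^2\,dx \le A^{\2-2}\!\!\int_{\R^3}(v_n v_{n,L}^{\beta-1})^2\,dx + \!\!\int_{\{v_n>A\}}\!\! v_n^{\2-2}(v_n v_{n,L}^{\beta-1})^2\,dx,$$
where $A$ is chosen so large that $|v_n|_{L^{\2}(\{v_n>A\})}$ is uniformly small enough for the last term to be absorbed on the left, one obtains a recursion of the form $|v_n|_{\2\beta}\le (C\beta)^{1/\beta}|v_n|_{\2(\beta-1)+2}$; iterating in $\beta$ yields the uniform bound $|v_n|_\infty\le C$.

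Third, for the decay, the $L^\infty$ bound combined with the strong convergence $v_n\to v$ in $H^s_{V_0}$ from Lemma \ref{prop3.3} and interpolation gives $v_n\to v$ in $L^p(\R^3,\R)$ for every $p\in[2,\infty)$. Since $v$ solves the limit autonomous equation and decays polynomially by the argument of \cite[Lemma 4.3]{FQT} (applicable because $(a+b[v]^2)(-\Delta)^s v$ reduces to a scalar fractional operator with bounded positive coefficient), the right-hand side $f(v_n^2)v_n+v_n^{\2-1}$ tends to $0$ in $L^p(\R^3\setminus B_R)$ uniformly in $n$ as $R\to\infty$. Comparing the subsolution $v_n$ outside a large ball with a barrier built from the fundamental solution of $\mu_*(-\Delta)^s + V_0/2$, which behaves like $|x|^{-(3+2s)}$ at infinity, one concludes $v_n(x)\to 0$ as $|x|\to\infty$ uniformly in $n$. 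The main obstacle is the Moser step: the critical-growth term must be absorbed uniformly in $n$ at every stage of the iteration, which requires the super-level sets of $v_n$ to carry uniformly small $L^{\2}$-mass, a property that in turn follows from the uniform $H^s_{V_0}$-boundedness of $(v_n)$ together with the equi-integrability granted by the strong convergence in Lemma \ref{prop3.3}. A secondary technical point is the rigorous justification of the Kato-type inequality in the magnetic setting, which demands careful control of the Gagliardo kernels uniformly in $\delta$.
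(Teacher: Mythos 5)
Your first two steps track the paper's proof closely, up to a harmless reordering: the paper runs the Moser iteration directly on $u_n$, using a pointwise inequality tailored to the test function $u_n u_{L,n}^{2(\beta-1)}$ to reduce the magnetic Gagliardo form to the scalar one for $|u_n|$, and only afterwards derives the Kato-type subsolution inequality for the decay part; you derive the subsolution inequality first and then iterate on the scalar inequality. Both orders work (the admissibility of the test function $\varphi u_n/u_{\delta,n}$ can be checked without the $L^\infty$ bound via Lemma \ref{cutoff}), and your justification of the absorption of the critical term --- uniform smallness of $\int_{\{v_n>A\}}v_n^{\2}\,dx$ via boundedness in $H^{s}$ together with the strong convergence from Lemma \ref{prop3.3} --- is exactly the point the paper treats tersely in its estimate \eqref{simo3}.

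The genuine gap is in your third step. As written, you propose to compare the subsolution $v_n$ on an exterior domain $B_R^c(0)$ with a barrier for $\mu_*(-\Delta)^s+V_0/2$. For such a maximum-principle comparison you need the pointwise inequality $(-\Delta)^s v_n+\frac{V_0}{2\mu_*}v_n\le 0$ on $B_R^c(0)$, which, via $(f_1)$, requires $f(v_n^2)v_n+v_n^{\2-1}\le\frac{V_0}{2}v_n$ there, i.e. that $v_n$ be pointwise small on $B_R^c(0)$ uniformly in $n$ --- precisely the conclusion you are trying to prove. The $L^p$-smallness of the right-hand side outside large balls does not substitute for this pointwise control. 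The paper avoids the circularity by comparing $v_n$ globally with the solution $z_n$ of the linear problem $(-\Delta)^s z_n+\frac{V_0}{a+bM^2}z_n=f_n$ in $\R^{3}$, where $f_n$ is uniformly bounded in $L^\infty(\R^{3},\R)$ and convergent in $L^r(\R^3,\R)$ for every finite $r\ge 2$; writing $z_n=\mathcal{K}\ast f_n$ with the Bessel kernel and exploiting its decay (the argument of \cite{AM}) gives $z_n(x)\rightarrow 0$ as $|x|\rightarrow\infty$ uniformly in $n$, whence $0\le v_n\le z_n$ yields the claim. The pointwise barrier comparison is then legitimately used only later, in the proof of Theorem \ref{thm1}, after the uniform vanishing has been established and \eqref{hzero} is available. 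Your ingredients are the right ones, but they must be fed into a representation-formula/convolution argument rather than an exterior maximum principle.
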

\begin{proof}
For any $L>0$, we define $u_{L,n}:=\min\{|u_{n}|, L\}\geq 0$ and we set $v_{L, n}=u_{n} u_{L,n}^{2(\beta-1)}$, where $\beta>1$ will be chosen later.
Taking $v_{L, n}$ as test function in (\ref{Pe}) we can see that
\begin{align}\label{conto1FF}
&(a+b[u_{n}]^{2}_{A_{\e_{n}}})\Re\left(\iint_{\R^{6}} \frac{(u_{n}(x)-u_{n}(y)e^{\imath A_{\e_{n}}(\frac{x+y}{2})\cdot (x-y)})}{|x-y|^{3+2s}} \overline{(u_{n}(x)u_{L,n}^{2(\beta-1)}(x)-u_{n}(y)u_{L,n}^{2(\beta-1)}(y)e^{\imath A_{\e_{n}}(\frac{x+y}{2})\cdot (x-y)})} \, dx dy\right)   \nonumber \\
&=\int_{\R^{3}} f(|u_{n}|^{2}) |u_{n}|^{2}u_{L,n}^{2(\beta-1)}  \,dx-\int_{\R^{3}} V_{\e_{n}} (x) |u_{n}|^{2} u_{L,n}^{2(\beta-1)} \, dx.
\end{align}
Now, we observe that
\begin{align*}
&\Re\left[(u_{n}(x)-u_{n}(y)e^{\imath A_{\e_{n}}(\frac{x+y}{2})\cdot (x-y)})\overline{(u_{n}(x)u_{L,n}^{2(\beta-1)}(x)-u_{n}(y)u_{L,n}^{2(\beta-1)}(y)e^{\imath A_{\e_{n}}(\frac{x+y}{2})\cdot (x-y)})}\right] \\
&=\Re\Bigl[|u_{n}(x)|^{2}u_{L,n}^{2(\beta-1)}(x)-u_{n}(x)\overline{u_{n}(y)} u_{L,n}^{2(\beta-1)}(y)e^{-\imath A_{\e_{n}}(\frac{x+y}{2})\cdot (x-y)}-u_{n}(y)\overline{u_{n}(x)} u_{L,n}^{2(\beta-1)}(x) e^{\imath A_{\e_{n}}(\frac{x+y}{2})\cdot (x-y)} \\
&+|u_{n}(y)|^{2}u_{L,n}^{2(\beta-1)}(y) \Bigr] \\
&\geq (|u_{n}(x)|^{2}u_{L,n}^{2(\beta-1)}(x)-|u_{n}(x)||u_{n}(y)|u_{L,n}^{2(\beta-1)}(y)-|u_{n}(y)||u_{n}(x)|u_{L,n}^{2(\beta-1)}(x)+|u_{n}(y)|^{2}u^{2(\beta-1)}_{L,n}(y) \\
&=(|u_{n}(x)|-|u_{n}(y)|)(|u_{n}(x)|u_{L,n}^{2(\beta-1)}(x)-|u_{n}(y)|u_{L,n}^{2(\beta-1)}(y)).
\end{align*}
Consequently,
\begin{align}\label{realeF}
&\Re\left(\iint_{\R^{6}} \frac{(u_{n}(x)-u_{n}(y)e^{\imath A_{\e_{n}}(\frac{x+y}{2})\cdot (x-y)})}{|x-y|^{3+2s}} \overline{(u_{n}(x)u_{L,n}^{2(\beta-1)}(x)-u_{n}(y)u_{L,n}^{2(\beta-1)}(y)e^{\imath A_{\e_{n}}(\frac{x+y}{2})\cdot (x-y)})} \, dx dy\right) \nonumber\\
&\geq \iint_{\R^{6}} \frac{(|u_{n}(x)|-|u_{n}(y)|)}{|x-y|^{3+2s}} (|u_{n}(x)|u_{L,n}^{2(\beta-1)}(x)-|u_{n}(y)|u_{L,n}^{2(\beta-1)}(y))\, dx dy.
\end{align}
For all $t\geq 0$, we define
\begin{equation*}
\gamma(t):=\gamma_{L, \beta}(t)=t t_{L}^{2(\beta-1)},
\end{equation*}
where  $t_{L}:=\min\{t, L\}$. 
Since $\gamma$ is an increasing function, we have
\begin{align*}
(p-q)(\gamma(p)- \gamma(q))\geq 0 \quad \mbox{ for any } p, q\in \R.
\end{align*}
Let 
\begin{equation*}
\Lambda(t):=\frac{|t|^{2}}{2} \quad \mbox{ and } \quad \Gamma(t):=\int_{0}^{t} (\gamma'(\tau))^{\frac{1}{2}} d\tau. 
\end{equation*}
and we observe
\begin{equation}\label{Gg}
\Lambda'(p-q)(\gamma(p)-\gamma(q))\geq |\Gamma(p)-\Gamma(q)|^{2} \quad \mbox{ for any } p, q\in\R. 
\end{equation}
In fact, for any $p, q\in \R$ such that $p<q$, the Jensen inequality yields
\begin{align*}
\Lambda'(p-q)(\gamma(p)-\gamma(q))&=(p-q)\int_{q}^{p} \gamma'(t)dt\\
&=(p-q)\int_{q}^{p} (\Gamma'(t))^{2}dt \\
&\geq \left(\int_{q}^{p} \Gamma'(t) dt\right)^{2}\\
&=(\Gamma(p)-\Gamma(q))^{2}.
\end{align*}
A similar argument works when $p\geq q$. Therefore, \eqref{Gg} holds true.
By \eqref{Gg}, it follows that
\begin{align}\label{Gg1}
|\Gamma(|u_{n}(x)|)- \Gamma(|u_{n}(y)|)|^{2} \leq (|u_{n}(x)|- |u_{n}(y)|)((|u_{n}|u_{L,n}^{2(\beta-1)})(x)- (|u_{n}|u_{L,n}^{2(\beta-1)})(y)). 
\end{align}
Using \eqref{realeF} and \eqref{Gg1} we obtain
\begin{align}\label{conto1FFF}
&\Re\left(\iint_{\R^{6}} \frac{(u_{n}(x)-u_{n}(y)e^{\imath A_{\e_{n}}(\frac{x+y}{2})\cdot (x-y)})}{|x-y|^{3+2s}} \overline{(u_{n}(x)u_{L,n}^{2(\beta-1)}(x)-u_{n}(y)u_{L,n}^{2(\beta-1)}(y)e^{\imath A_{\e_{n}}(\frac{x+y}{2})\cdot (x-y)})} \, dx dy\right) \nonumber \\
&\geq [\Gamma(|u_{n}|)]^{2}.
\end{align}
Since $\Gamma(|u_{n}|)\geq \frac{1}{\beta} |u_{n}| u_{L,n}^{\beta-1}$, we can use the fractional Sobolev embedding $D^{s,2}(\R^{3}, \R)\subset L^{\2}(\R^{3}, \R)$ to see that
\begin{equation}\label{SS1}
[\Gamma(|u_{n}|)]^{2}\geq S_{*} |\Gamma(|u_{n}|)|^{2}_{\2}\geq \left(\frac{1}{\beta}\right)^{2} S_{*}||u_{n}| u_{L,n}^{\beta-1}|^{2}_{2}.
\end{equation}
Putting together \eqref{conto1FF}, \eqref{conto1FFF}, \eqref{SS1} and noting that $a\leq a+b[u_{n}]_{A_{\e_{n}}}^{2}\leq a+bM^{2}$, we obtain that
\begin{align}\label{BMS}
a\left(\frac{1}{\beta}\right)^{2} S_{*} ||u_{n}| u_{L,n}^{\beta-1}|^{2}_{\2}+\int_{\R^{3}} V_{\e_{n}} (x)|u_{n}|^{2}u_{L,n}^{2(\beta-1)} dx\leq \int_{\R^{3}} f(|u_{n}|^{2}) |u_{n}|^{2} u_{L,n}^{2(\beta-1)} dx.
\end{align}
On the other hand, from $(f_1)$ and $(f_2)$, it follows that for any $\xi>0$ there exists $C_{\xi}>0$ such that
\begin{equation}\label{SS2}
f(t^{2})t^{2}\leq \xi |t|^{2}+C_{\xi}|t|^{\2} \mbox{ for all } t\in \R.
\end{equation}
Taking $\xi\in (0, V_{0})$ and using \eqref{BMS} and \eqref{SS2} we have
\begin{equation}\label{simo1}
|w_{L,n}|_{\2}^{2}\leq C \beta^{2} \int_{\R^{3}} |u_{n}|^{\2}u_{L,n}^{2(\beta-1)},
\end{equation}
where $w_{L,n}:=|u_{n}| u_{L,n}^{\beta-1}$. 
Take $\beta=\frac{\2}{2}$ and fix $R>0$. Recalling that $0\leq u_{L,n}\leq |u_{n}|$ and applying the H\"older inequality, we get
\begin{align}\label{simo2}
\int_{\R^{3}} |u_{n}|^{\2}u_{L,n}^{2(\beta-1)}dx&=\int_{\R^{3}} |u_{n}|^{\2-2} |u_{n}|^{2} u_{L,n}^{\2-2}dx \nonumber\\
&=\int_{\R^{3}} |u_{n}|^{\2-2} (|u_{n}| u_{L,n}^{\frac{\2-2}{2}})^{2}dx \nonumber\\
&\leq \int_{\{|u_{n}|<R\}} R^{\2-2} |u_{n}|^{\2} dx+\int_{\{|u_{n}|>R\}} |u_{n}|^{\2-2} (|u_{n}| u_{L,n}^{\frac{\2-2}{2}})^{2}dx \nonumber\\
&\leq \int_{\{|u_{n}|<R\}} R^{\2-2} |u_{n}|^{\2} dx+\left(\int_{\{|u_{n}|>R\}} |u_{n}|^{\2} dx\right)^{\frac{\2-2}{\2}} \left(\int_{\R^{3}} (|u_{n}| u_{L,n}^{\frac{\2-2}{2}})^{\2}dx\right)^{\frac{2}{\2}}.
\end{align}
Since $(|u_{n}|)$ is bounded in $H^{s}(\R^{3}, \R)$, we can see that for any $R$ sufficiently large
\begin{equation}\label{simo3}
\left(\int_{\{|u_{n}|>R\}} |u_{n}|^{\2} dx\right)^{\frac{\2-2}{\2}}\leq  \frac{1}{2C\beta^{2}}.
\end{equation}
In the light of \eqref{simo1}, \eqref{simo2} and \eqref{simo3}, we infer that
\begin{equation*}
\left(\int_{\R^{3}} (|u_{n}| u_{L,n}^{\frac{\2-2}{2}})^{\2} \right)^{\frac{2}{\2}}\leq C\beta^{2} \int_{\R^{3}} R^{\2-2} |u_{n}|^{\2} dx<\infty,
\end{equation*}
and taking the limit as $L\rightarrow \infty$ we deduce that $|u_{n}|\in L^{\frac{(\2)^{2}}{2}}(\R^{3},\R)$.\\
Since $0\leq u_{L,n}\leq |u_{n}|$ and letting the limit as $L\rightarrow \infty$ in \eqref{simo1}, we get
\begin{equation*}
|u_{n}|_{\beta\2}^{2\beta}\leq C \beta^{2} \int_{\R^{3}} |u_{n}|^{\2+2(\beta-1)},
\end{equation*}
from which we deduce that
\begin{equation*}
\left(\int_{\R^{3}} |u_{n}|^{\beta\2} dx\right)^{\frac{1}{\2(\beta-1)}}\leq (C \beta)^{\frac{1}{\beta-1}} \left(\int_{\R^{3}} |u_{n}|^{\2+2(\beta-1)}\right)^{\frac{1}{2(\beta-1)}}.
\end{equation*}
For $m\geq 1$, we define $\beta_{m+1}$ inductively so that $\2+2(\beta_{m+1}-1)=\2 \beta_{m}$ and $\beta_{1}=\frac{\2}{2}$. \\
Hence
\begin{equation*}
\left(\int_{\R^{3}} |u_{n}|^{\beta_{m+1}\2} dx\right)^{\frac{1}{\2(\beta_{m+1}-1)}}\leq (C \beta_{m+1})^{\frac{1}{\beta_{m+1}-1}} \left(\int_{\R^{3}} |u_{n}|^{\2\beta_{m}}\right)^{\frac{1}{\2(\beta_{m}-1)}}.
\end{equation*}
Setting
$$
D_{m}=\left(\int_{\R^{3}} |u_{n}|^{\2\beta_{m}}\right)^{\frac{1}{\2(\beta_{m}-1)}},
$$
we can use an iterative argument to deduce that there is $C_{0}>0$ independent of $m$ such that 
$$
D_{m+1}\leq \prod_{k=1}^{m} (C \beta_{k+1})^{\frac{1}{\beta_{k+1}-1}}  D_{1}\leq C_{0} D_{1}.
$$
Taking the limit as $m\rightarrow \infty$ we get 
\begin{equation}\label{UBu}
|u_{n}|_{\infty}\leq C_{0}D_{1}=:K \mbox{ for all } n\in \mathbb{N}.
\end{equation}
Moreover, by interpolation, we can infer that $(|u_{n}|)$ strongly converges in $L^{r}(\R^{3}, \R)$ for all $r\in (2, \infty)$. By $(f_1)$-$(f_2)$, we can see that $f(|u_{n}|^{2})|u_{n}|$ strongly converges in $L^{r}(\R^{3}, \R)$ for all $r\in [2, \infty)$.

Now, we prove that $v_{n}$ vanishes at infinity uniformly in $n\in \mathbb{N}$. Firstly, we show that $|u_{n}|$ is a weak subsolution to 
\begin{equation}\label{Kato0}
\left\{
\begin{array}{ll}
(a+b[v]^{2})(-\Delta)^{s}v+V_{0} v=f(v^{2})v+v^{\2-1} &\mbox{ in } \R^{3}, \\
v\geq 0 \quad \mbox{ in } \R^{3}.
\end{array}
\right.
\end{equation}
To do this, we proceed as in \cite{Acpde} using a sort of approximated Kato's inequality for solutions of \eqref{Pe}.

Take $\varphi\in C^{\infty}_{c}(\R^{3}, \R)$ such that $\varphi\geq 0$, and we use $\psi_{\delta, n}=\frac{u_{n}}{u_{\delta, n}}\varphi$ as test function in \eqref{Pe}, where $u_{\delta,n}=\sqrt{|u_{n}|^{2}+\delta^{2}}$ for $\delta>0$. Note that $\psi_{\delta, n}\in H^{s}_{\e_{n}}$ for all $\delta>0$ and $n\in \mathbb{N}$. Indeed $\int_{\R^{3}} V_{\e_{n}} (x) |\psi_{\delta,n}|^{2} dx\leq \int_{\supp(\varphi)} V_{\e_{n}} (x)\varphi^{2} dx<\infty$. 
Now, we note that 
\begin{align*}
\psi_{\delta,n}(x)-\psi_{\delta,n}(y)e^{\imath A_{\e_{n}}(\frac{x+y}{2})\cdot (x-y)}&=\left(\frac{u_{n}(x)}{u_{\delta,n}(x)}\right)\varphi(x)-\left(\frac{u_{n}(y)}{u_{\delta,n}(y)}\right)\varphi(y)e^{\imath A_{\e_{n}}(\frac{x+y}{2})\cdot (x-y)}\\
&=\left[\left(\frac{u_{n}(x)}{u_{\delta,n}(x)}\right)-\left(\frac{u_{n}(y)}{u_{\delta,n}(x)}\right)e^{\imath A_{\e_{n}}(\frac{x+y}{2})\cdot (x-y)}\right]\varphi(x) \\
&+\left[\varphi(x)-\varphi(y)\right] \left(\frac{u_{n}(y)}{u_{\delta,n}(x)}\right) e^{\imath A_{\e_{n}}(\frac{x+y}{2})\cdot (x-y)} \\
&+\left(\frac{u_{n}(y)}{u_{\delta,n}(x)}-\frac{u_{n}(y)}{u_{\delta,n}(y)}\right)\varphi(y) e^{\imath A_{\e_{n}}(\frac{x+y}{2})\cdot (x-y)}.
\end{align*}
Putting together $|z+w+k|^{2}\leq 4(|z|^{2}+|w|^{2}+|k|^{2})$ for all $z,w,k\in \C$, $|e^{\imath t}|=1$ for all $t\in \R$, $u_{\delta,n}\geq \delta$, $|\frac{u_{n}}{u_{\delta,n}}|\leq 1$, \eqref{UBu} and $|\sqrt{|z|^{2}+\delta^{2}}-\sqrt{|w|^{2}+\delta^{2}}|\leq ||z|-|w||$ for all $z, w\in \C$, we can deduce that
\begin{align*}
&|\psi_{\delta,n}(x)-\psi_{\delta,n}(y)e^{\imath A_{\e_{n}}(\frac{x+y}{2})\cdot (x-y)}|^{2} \\
&\leq \frac{4}{\delta^{2}}|u_{n}(x)-u_{n}(y)e^{\imath A_{\e_{n}}(\frac{x+y}{2})\cdot (x-y)}|^{2}|\varphi|^{2}_{\infty} +\frac{4}{\delta^{2}}|\varphi(x)-\varphi(y)|^{2} |u_{n}|^{2}_{\infty} \\
&+\frac{4}{\delta^{4}} |u_{n}|^{2}_{\infty} \|\varphi\|^{2}_{L^{\infty}(\R^{3})} |u_{\delta,n}(y)-u_{\delta,n}(x)|^{2} \\
&\leq \frac{4}{\delta^{2}}|u_{n}(x)-u_{n}(y)e^{\imath A_{\e_{n}}(\frac{x+y}{2})\cdot (x-y)}|^{2}|\varphi|^{2}_{\infty} +\frac{4K^{2}}{\delta^{2}}|\varphi(x)-\varphi(y)|^{2} \\
&+\frac{4K^{2}}{\delta^{4}} |\varphi|^{2}_{\infty} ||u_{n}(y)|-|u_{n}(x)||^{2}. 
\end{align*}
In view of $u_{n}\in H^{s}_{\e_{n}}$, $|u_{n}|\in H^{s}(\R^{3}, \R)$ (by Lemma \ref{DI}) and $\varphi\in C^{\infty}_{c}(\R^{3}, \R)$, we get $\psi_{\delta,n}\in H^{s}_{\e_{n}}$.
Consequently,
\begin{align}\label{Kato1}
&(a+b[u_{n}]^{2}_{A_{\e_{n}}})\Re\left[\iint_{\R^{6}} \frac{(u_{n}(x)-u_{n}(y)e^{\imath A_{\e_{n}}(\frac{x+y}{2})\cdot (x-y)})}{|x-y|^{3+2s}} \left(\frac{\overline{u_{n}(x)}}{u_{\delta,n}(x)}\varphi(x)-\frac{\overline{u_{n}(y)}}{u_{\delta,n}(y)}\varphi(y)e^{-\imath A_{\e_{n}}(\frac{x+y}{2})\cdot (x-y)}  \right) dx dy\right] \nonumber\\
&+\int_{\R^{3}} V_{\e_{n}} (x)\frac{|u_{n}|^{2}}{u_{\delta,n}}\varphi dx=\int_{\R^{3}} f(|u_{n}|^{2})\frac{|u_{n}|^{2}}{u_{\delta,n}}\varphi+\frac{|u_{n}|^{\2}}{u_{\delta, n}} \varphi dx.
\end{align}
Since $\Re(z)\leq |z|$ for all $z\in \C$ and  $|e^{\imath t}|=1$ for all $t\in \R$, it follows that
\begin{align}\label{alves1}
&\Re\left[(u_{n}(x)-u_{n}(y)e^{\imath A_{\e_{n}}(\frac{x+y}{2})\cdot (x-y)}) \left(\frac{\overline{u_{n}(x)}}{u_{\delta,n}(x)}\varphi(x)-\frac{\overline{u_{n}(y)}}{u_{\delta,n}(y)}\varphi(y)e^{-\imath A_{\e_{n}}(\frac{x+y}{2})\cdot (x-y)}  \right)\right] \nonumber\\
&=\Re\left[\frac{|u_{n}(x)|^{2}}{u_{\delta,n}(x)}\varphi(x)+\frac{|u_{n}(y)|^{2}}{u_{\delta,n}(y)}\varphi(y)-\frac{u_{n}(x)\overline{u_{n}(y)}}{u_{\delta,n}(y)}\varphi(y)e^{-\imath A_{\e_{n}}(\frac{x+y}{2})\cdot (x-y)} -\frac{u_{n}(y)\overline{u_{n}(x)}}{u_{\delta,n}(x)}\varphi(x)e^{\imath A_{\e_{n}}(\frac{x+y}{2})\cdot (x-y)}\right] \nonumber \\
&\geq \left[\frac{|u_{n}(x)|^{2}}{u_{\delta,n}(x)}\varphi(x)+\frac{|u_{n}(y)|^{2}}{u_{\delta,n}(y)}\varphi(y)-|u_{n}(x)|\frac{|u_{n}(y)|}{u_{\delta,n}(y)}\varphi(y)-|u_{n}(y)|\frac{|u_{n}(x)|}{u_{\delta,n}(x)}\varphi(x) \right].
\end{align}
Now, we note that
\begin{align}\label{alves2}
&\frac{|u_{n}(x)|^{2}}{u_{\delta,n}(x)}\varphi(x)+\frac{|u_{n}(y)|^{2}}{u_{\delta,n}(y)}\varphi(y)-|u_{n}(x)|\frac{|u_{n}(y)|}{u_{\delta,n}(y)}\varphi(y)-|u_{n}(y)|\frac{|u_{n}(x)|}{u_{\delta,n}(x)}\varphi(x) \nonumber\\
&=  \frac{|u_{n}(x)|}{u_{\delta,n}(x)}(|u_{n}(x)|-|u_{n}(y)|)\varphi(x)-\frac{|u_{n}(y)|}{u_{\delta,n}(y)}(|u_{n}(x)|-|u_{n}(y)|)\varphi(y) \nonumber\\
&=\left[\frac{|u_{n}(x)|}{u_{\delta,n}(x)}(|u_{n}(x)|-|u_{n}(y)|)\varphi(x)-\frac{|u_{n}(x)|}{u_{\delta,n}(x)}(|u_{n}(x)|-|u_{n}(y)|)\varphi(y)\right] \nonumber\\
&+\left(\frac{|u_{n}(x)|}{u_{\delta,n}(x)}-\frac{|u_{n}(y)|}{u_{\delta,n}(y)} \right) (|u_{n}(x)|-|u_{n}(y)|)\varphi(y) \nonumber\\
&=\frac{|u_{n}(x)|}{u_{\delta,n}(x)}(|u_{n}(x)|-|u_{n}(y)|)(\varphi(x)-\varphi(y)) +\left(\frac{|u_{n}(x)|}{u_{\delta,n}(x)}-\frac{|u_{n}(y)|}{u_{\delta,n}(y)} \right) (|u_{n}(x)|-|u_{n}(y)|)\varphi(y) \nonumber\\
&\geq \frac{|u_{n}(x)|}{u_{\delta,n}(x)}(|u_{n}(x)|-|u_{n}(y)|)(\varphi(x)-\varphi(y)) 
\end{align}
where in the last inequality we used that
$$
\left(\frac{|u_{n}(x)|}{u_{\delta,n}(x)}-\frac{|u_{n}(y)|}{u_{\delta,n}(y)} \right) (|u_{n}(x)|-|u_{n}(y)|)\varphi(y)\geq 0
$$
being
$$
h(t)=\frac{t}{\sqrt{t^{2}+\delta^{2}}} \mbox{ is increasing for } t\geq 0 \quad \mbox{ and } \quad \varphi\geq 0 \mbox{ in }\R^{3}.
$$
Observing that
$$
\frac{|\frac{|u_{n}(x)|}{u_{\delta,n}(x)}(|u_{n}(x)|-|u_{n}(y)|)(\varphi(x)-\varphi(y))|}{|x-y|^{3+2s}}\leq \frac{||u_{n}(x)|-|u_{n}(y)||}{|x-y|^{\frac{3+2s}{2}}} \frac{|\varphi(x)-\varphi(y)|}{|x-y|^{\frac{3+2s}{2}}}\in L^{1}(\R^{6}),
$$
and $\frac{|u_{n}(x)|}{u_{\delta,n}(x)}\rightarrow 1$ a.e. in $\R^{3}$ as $\delta\rightarrow 0$,
we can apply \eqref{alves1}, \eqref{alves2} and the Dominated Convergence Theorem to deduce that
\begin{align}\label{Kato2}
&\limsup_{\delta\rightarrow 0} \Re\left[\iint_{\R^{6}} \frac{(u_{n}(x)-u_{n}(y)e^{\imath A_{\e_{n}}(\frac{x+y}{2})\cdot (x-y)})}{|x-y|^{3+2s}} \left(\frac{\overline{u_{n}(x)}}{u_{\delta,n}(x)}\varphi(x)-\frac{\overline{u_{n}(y)}}{u_{\delta,n}(y)}\varphi(y)e^{-\imath A_{\e_{n}}(\frac{x+y}{2})\cdot (x-y)}  \right) dx dy\right] \nonumber\\
&\geq \limsup_{\delta\rightarrow 0} \iint_{\R^{6}} \frac{|u_{n}(x)|}{u_{\delta,n}(x)}(|u_{n}(x)|-|u_{n}(y)|)(\varphi(x)-\varphi(y)) \frac{dx dy}{|x-y|^{3+2s}} \nonumber\\
&=\iint_{\R^{6}} \frac{(|u_{n}(x)|-|u_{n}(y)|)(\varphi(x)-\varphi(y))}{|x-y|^{3+2s}} dx dy.
\end{align}
On the other hand, from the Dominated Convergence Theorem (we recall that $\frac{|u_{n}|^{2}}{u_{\delta, n}}\leq |u_{n}|$ and $\varphi\in C^{\infty}_{c}(\R^{3}, \R)$)  it follows that
\begin{equation}\label{Kato3}
\lim_{\delta\rightarrow 0} \int_{\R^{3}} V_{\e_{n}} (x)\frac{|u_{n}|^{2}}{u_{\delta,n}}\varphi dx=\int_{\R^{3}} V_{\e_{n}} (x)|u_{n}|\varphi dx\geq \int_{\R^{3}} V_{0}|u_{n}|\varphi dx,
\end{equation}
\begin{equation}\label{KatoP}
\liminf_{\delta\rightarrow 0} \int_{\R^{3}} \frac{|u_{n}|^{\2}}{u_{\delta, n}}\varphi dx= \int_{\R^{3}}  |u|^{\2-1}\varphi dx,
\end{equation}
and
\begin{equation}\label{Kato4}
\lim_{\delta\rightarrow 0}  \int_{\R^{3}} f(|u_{n}|^{2})\frac{|u_{n}|^{2}}{u_{\delta,n}}\varphi dx=\int_{\R^{3}} f(|u_{n}|^{2}) |u_{n}|\varphi dx.
\end{equation}
By Lemma \ref{DI} we can also see that
\begin{equation}\label{Kff}
a+b[u_{n}]^{2}_{A_{\e_{n}}}\geq a+b[|u_{n}|]^{2}.
\end{equation}
Putting together \eqref{Kato1}, \eqref{Kato2}, \eqref{Kato3}, \eqref{KatoP}, \eqref{Kato4} and \eqref{Kff} we can infer that
\begin{align*}
(a+b[|u_{n}|]^{2})\iint_{\R^{6}} \frac{(|u_{n}(x)|-|u_{n}(y)|)(\varphi(x)-\varphi(y))}{|x-y|^{3+2s}}& dx dy+\int_{\R^{3}} V_{0}|u_{n}|\varphi dx \\
&\leq \int_{\R^{3}} [f(|u_{n}|^{2}) |u_{n}|+|u_{n}|^{\2-1}]\varphi dx
\end{align*}
for any $\varphi\in C^{\infty}_{c}(\R^{3}, \R)$ such that $\varphi\geq 0$, that is $|u_{n}|$ is a weak subsolution to \eqref{Kato0}.\\
Now, we define $v_{n}=|u_{n}|(\cdot+\tilde{y}_{n})$. Then Lemma \ref{DI} yields
$$
a+b[v_{n}]^{2}=a+b[|u_{n}|]^{2}\leq a+b[u_{n}]_{A_{\e_{n}}}^{2}\leq a+bM^{2}.
$$
Note that $v_{n}$ verifies 
\begin{equation}\label{Pkat}
(-\Delta)^{s} v_{n} + \frac{V_{0}}{a+bM^{2}} v_{n}\leq f_{n} \mbox{ in } \R^{3},
\end{equation}
where
$$
f_{n}:=(a+b[v_{n}]^{2})^{-1}[f(v_{n}^{2})v_{n}+v_{n}^{\2-1}-V_{0}v_{n}]+\frac{V_{0}}{a+bM^{2}}v_{n}.
$$
Let $z_{n}\in H^{s}(\R^{3}, \R)$ be the unique solution to
\begin{equation}\label{US}
(-\Delta)^{s} z_{n} + \frac{V_{0}}{a+bM^{2}} z_{n}=f_{n} \quad \mbox{ in } \R^{3}.
\end{equation}
In the light of \eqref{UBu}, we know that $|v_{n}|_{\infty}\leq C$ for all $n\in \mathbb{N}$, and by interpolation $v_{n}\rightarrow v$ strongly converges in $L^{r}(\R^{3}, \R)$ for all $r\in (2, \infty)$, for some $v\in L^{r}(\R^{3}, \R)$.
From the growth assumptions on $f$, we can see that 
$$
f_{n}\rightarrow  (a+b[v]^{2})^{-1}[f(v^{2})v+v^{\2-1}-V_{0}v]+\frac{V_{0}}{a+bM^{2}}v \quad \mbox{ in } L^{r}(\R^{3}, \R) \quad \forall r\in [2, \infty),
$$ 
and there exists $C>0$ such that $|f_{n}|_{\infty}\leq C$ for all $n\in \mathbb{N}$.
Then $z_{n}=\mathcal{K}*g_{n}$ (see \cite{FQT}), where $\mathcal{K}$ is the Bessel kernel, and arguing as in \cite{AM}, we can see that $|z_{n}(x)|\rightarrow 0$ as $|x|\rightarrow \infty$ uniformly in $n\in \mathbb{N}$.
Taking into account $v_{n}$ satisfies \eqref{Pkat} and $z_{n}$ solves \eqref{US}, we can see that by comparison, $0\leq v_{n}\leq z_{n}$ a.e. in $\R^{3}$ and for all $n\in \mathbb{N}$. Therefore, $v_{n}(x)\rightarrow 0$ as $|x|\rightarrow \infty$ uniformly in $n\in \mathbb{N}$.
\end{proof}

\begin{remark}
We can also prove that $v_{n}(x)\rightarrow 0$ as $|x|\rightarrow \infty$ uniformly in $n\in \mathbb{N}$ following the approach in \cite{Amjm}, that is showing that $|u_{n}|^{2}$ is a sub-solution of a suitable fractional Kirchhoff equation. Anyway, the argument
used in Lemma \ref{moser}  will be useful to deduce informations on the decay estimate of solutions to \eqref{P}; see the last part of the proof of Theorem \ref{thm1} below.
\end{remark}

\noindent
We conclude this section giving the proof of the main result of this paper.
\begin{proof}[Proof of Theorem \ref{thm1}] \hfill\\
{\bf Multiplicity of solutions} \\
For any fixed  $\delta>0$, we can invoke Lemma \ref{lem3.4}, Lemma \ref{lem3.5N}, Lemma \ref{lem3.5} and argue as in \cite{CL} to obtain some $\e_{\delta}>0$ such that, for any $\e\in (0, \e_{\delta})$, the following diagram
$
M \stackrel{\Phi_{\e}}\rightarrow \widetilde{\mathcal{N}}_{\e} \stackrel{\beta_{\e}}\rightarrow M_{\delta}
$
is well defined. In particular, for $\e>0$ small enough, we have $\beta_{\e}(\Phi_{\e}(y))=y+\alpha(y)$ for $y\in M$, where $|\alpha(y)|<\frac{\delta}{2}$ uniformly for $y\in M$.
Let us define $H: [0, 1]\times M\rightarrow M_{\delta}$ by $H(t, y)=y+(1-t)\alpha(y)$. Thus, it is easy to check that $H$ is continuous and $H(0, y)=y$ for all $y\in M$, which implies that $\beta_{\e}\circ \Phi_{\e}$ is homotopically equivalent to the embedding  $\iota: M\rightarrow M_{\delta}$.
Arguing as in \cite{BC}, we can see that $cat_{\widetilde{\mathcal{N}}_{\e}}(\widetilde{\mathcal{N}}_{\e})\geq cat_{M_{\delta}}(M)$. On the other hand, using the definition of $\widetilde{\mathcal{N}}_{\e}$ and choosing $\e_{\delta}$ sufficiently small, we can use Proposition \ref{propPSc} to deduce that $J_{\e}$ verifies the Palais-Smale condition in $\widetilde{\mathcal{N}}_{\e}$.
Applying standard Ljusternik-Schnirelmann theory \cite{W}, we can infer that $J_{\e}$  possesses at least $cat_{\widetilde{\mathcal{N}}_{\e}}(\widetilde{\mathcal{N}}_{\e})$  critical points on $\mathcal{N}_{\e}$. In view of Corollary \ref{cor},  we obtain at least $cat_{M_{\delta}}(M)$ nontrivial solutions for  \eqref{Pe}.

\noindent
{\bf Concentration of the maximum points} \\
Take $\e_{n}\rightarrow 0$ and $u_{n}:=u_{\e_{n}}$ be a solutions to \eqref{Pe} and we set $v_{n}(x)= |u_{n}|(x+\tilde{y}_{n})$. Let us prove that $|v_{n}|_{\infty}\geq \rho$ for all $n\in \mathbb{N}$. Assume by contradiction that $|v_{n}|_{\infty} \rightarrow 0$. Using $(f_{1})$ we can find $n_{0}\in \mathbb{N}$ such that 
\begin{align}\label{4.18HZ}
\frac{f(|v_{n}|_{\infty}^{2})}{|v_{n}|_{\infty}^{2}}< \frac{V_{0}}{2} \quad \mbox{ for all } n\geq n_{0}.
\end{align}
Then, exploiting $J_{\e_{n}}'(u_{n})=0$, \eqref{4.18HZ}, Lemma \ref{DI} and $(f_{4})$ we can deduce that 
\begin{align*}
a[v_{n}]^{2} + V_{0} |v_{n}|_{2}^{2} &= a[|u_{n}|]^{2}+\int_{\R^{3}}V_{0}|u_{n}|^{2}dx\\
&\leq \|u_{n}\|^{2}_{\e_{n}}+b[u_{n}]^{4}_{A_{\e_{n}}}\\
&= \int_{\R^{3}} f(|u_{n}|^{2}) |u_{n}|^{2}\, dx + |u_{n}|_{\2}^{\2}\\
&= \int_{\R^{3}} f(v_{n}^{2}) v_{n}^{2} \, dx + |v_{n}|_{\2}^{\2}\\
&\leq \int_{\R^{3}} \frac{f(|v_{n}|_{\infty}^{2})}{|v_{n}|_{\infty}^{2}} |v_{n}|^{4} \, dx + |v_{n}|_{\2}^{\2}\\
&\leq \frac{V_{0}}{2} |v_{n}|_{\infty}^{2} |v_{n}|_{2}^{2} + |v_{n}|_{\infty}^{\2-2} |v_{n}|_{2}^{2}
\end{align*}
and this implies that $\|v_{n}\|_{V_{0}}=0$ for all $n\geq n_{0}$. This gives a contradiction because $v_{n}\rightarrow v$ in $H^{s}_{V_{0}}$ and $v\neq 0$ by Lemma \ref{prop3.3}.  
Therefore, we deduce that $\rho \leq |v_{n}|_{\infty}\leq C$ for all $n\in \mathbb{N}$. Hence, we can see that the maximum points $p_{n}$ of $|u_{n}|$ belong to $B_{R}(\tilde{y}_{n})$, that is $p_{n}=\tilde{y}_{n}+q_{n}$ for some $q_{n}\in B_{R}(0)$.  Since the associated solution of \eqref{P} is of the form $\hat{u}_{n}(x)=u_{n}(x/\e_{n})$, we can infer that a maximum point $\eta_{\e_{n}}$ of $|\hat{u}_{n}|$ is $\eta_{\e_{n}}=\e_{n}\tilde{y}_{n}+\e_{n}q_{n}$. In view of $q_{n}\in B_{R}(0)$, $\e_{n}\tilde{y}_{n}\rightarrow y_{0}$ and $V(y_{0})=V_{0}$, we can use the continuity of $V$ to deduce that
$$
\lim_{n\rightarrow \infty} V(\eta_{\e_{n}})=V_{0}.
$$
{\bf Decay estimate} \\
Invoking Lemma $4.3$ in \cite{FQT}, we can find a function $w$ and a constant $R>0$ such that 
\begin{align}\label{HZ1}
0<w(x)\leq \frac{C}{1+|x|^{3+2s}},
\end{align}
and
\begin{align}\label{HZ2}
(-\Delta)^{s} w+\frac{V_{0}}{2(a+bM^{2})}w\geq 0 \mbox{ in } B^{c}_{R_{1}}(0) 
\end{align}
where $M>0$ is such that $a+bM^{2}\geq a+b[u_{n}]^{2}_{A_{\e_{n}}}\geq a+b[v_{n}]^{2}$ (the last inequality is due to Lemma \ref{DI}). Since Lemma \ref{moser} implies that $v_{n}(x)\rightarrow 0$ as $|x|\rightarrow \infty$ uniformly in $n\in \mathbb{N}$, by $(f_1)$ it follows that there exists $R_{2}>0$ such that
\begin{equation}\label{hzero}
f(v_{n}^{2})v_{n}+v_{n}^{\2-1}\leq \frac{V_{0}}{2}v_{n}  \mbox{ in } B_{R_{2}}^{c}(0).
\end{equation}
Arguing as in Lemma \ref{moser}, we can see that $v_{n}$ verifies
\begin{equation}\label{Pkat}
(-\Delta)^{s} v_{n} + \frac{V_{0}}{a+bM^{2}} v_{n}\leq f_{n} \mbox{ in } \R^{3},
\end{equation}
where
$$
f_{n}:=(a+b[v_{n}]^{2})^{-1}[f(v_{n}^{2})v_{n}+v_{n}^{\2-1}-V(\e_{n} x+\e_{n}\tilde{y}_{n})v_{n}]+\frac{V_{0}}{a+bM^{2}}v_{n}.
$$
Let $w_{n}$ be the unique solution to 
$$
(-\Delta)^{s}w_{n}+\frac{V_{0}}{(a+bM^{2})}w_{n}=f_{n} \mbox{ in } \R^{3}.
$$
Then, by comparison, $0\leq v_{n}\leq w_{n}$ in $\R^{3}$ and using \eqref{hzero} we obtain
\begin{align*}
(-\Delta)^{s}w_{n}&+\frac{V_{0}}{2(a+bM^{2})}w_{n} \\
&=(-\Delta)^{s}w_{n}+\frac{V_{0}}{(a+bM^{2})}w_{n}-\frac{V_{0}}{2(a+bM^{2})}w_{n} \\
&\leq f_{n}-\frac{V_{0}}{2(a+bM^{2})}v_{n} \\
&\leq (a+b[v_{n}]^{2})^{-1}[f(v_{n}^{2})v_{n}+v_{n}^{\2-1}-V(\e_{n} x+\e_{n}\tilde{y}_{n})v_{n}]+\frac{V_{0}}{2(a+bM^{2})}v_{n} \\
&\leq (a+b[v_{n}]^{2})^{-1}\left\{f(v_{n}^{2})v_{n}+v_{n}^{\2-1}-\left(V(\e_{n} x+\e_{n}\tilde{y}_{n})-\frac{V_{0}}{2}\right)v_{n}\right\} \\
&\leq (a+b[v_{n}]^{2})^{-1}\left\{f(v_{n}^{2})v_{n}+v_{n}^{\2-1}-\frac{V_{0}}{2} v_{n}\right\} \leq 0 \mbox{ in } B_{R_{2}}^{c}(0).
\end{align*}
Set $R_{3}:=\max\{R_{1}, R_{2}\}$ and we define
\begin{align}\label{HZ4}
c:=\inf_{B_{R_{3}}(0)} w>0 \mbox{ and } \tilde{w}_{n}:=(d+1)w-c w_{n},
\end{align}
where $d:=\sup_{n\in \mathbb{N}} |w_{n}|_{\infty}<\infty$. 
Our claim is to prove that
\begin{equation}\label{HZ5}
\tilde{w}_{n}\geq 0 \mbox{ in } \R^{3}.
\end{equation}
We begin observing that
\begin{align}
&\lim_{|x|\rightarrow \infty} \sup_{n\in \mathbb{N}}\tilde{w}_{n}(x)=0,  \label{HZ0N} \\
&\tilde{w}_{n}\geq dc+w-dc>0 \mbox{ in } B_{R_{3}}(0) \label{HZ0},\\
&(-\Delta)^{s} \tilde{w}_{n}+\frac{V_{0}}{2(a+bM^{2})}\tilde{w}_{n}\geq 0 \mbox{ in } B^{c}_{R_{3}}(0) \label{HZ00}.
\end{align}
Now, we assume by contradiction that there exists a sequence $(\bar{x}_{j, n})\subset \R^{3}$ such that 
\begin{align}\label{HZ6}
\inf_{x\in \R^{3}} \tilde{w}_{n}(x)=\lim_{j\rightarrow \infty} \tilde{w}_{n}(\bar{x}_{j, n})<0. 
\end{align}
Then, by  (\ref{HZ0N}), $(\bar{x}_{j, n})$ is bounded and, up to subsequence, we may assume that there exists $\bar{x}_{n}\in \R^{3}$ such that $\bar{x}_{j, n}\rightarrow \bar{x}_{n}$ as $j\rightarrow \infty$. 
Consequently, (\ref{HZ6}) implies that
\begin{align}\label{HZ7}
\inf_{x\in \R^{3}} \tilde{w}_{n}(x)= \tilde{w}_{n}(\bar{x}_{n})<0.
\end{align}
Using the minimality of $\bar{x}_{n}$ and the representation formula for the fractional Laplacian (see Lemma $3.2$ in \cite{DPV}), we can deduce that 
\begin{align}\label{HZ8}
(-\Delta)^{s}\tilde{w}_{n}(\bar{x}_{n})=\frac{c_{3, s}}{2} \int_{\R^{3}} \frac{2\tilde{w}_{n}(\bar{x}_{n})-\tilde{w}_{n}(\bar{x}_{n}+\xi)-\tilde{w}_{n}(\bar{x}_{n}-\xi)}{|\xi|^{3+2s}} d\xi\leq 0.
\end{align}
Combining (\ref{HZ0}) and (\ref{HZ6}), we obtain $\bar{x}_{n}\in  B^{c}_{R_{3}}(0)$,
which together with (\ref{HZ7}) and (\ref{HZ8}) gives
$$
(-\Delta)^{s} \tilde{w}_{n}(\bar{x}_{n})+\frac{V_{0}}{2(a+bM^{2})}\tilde{w}_{n}(\bar{x}_{n})<0.
$$
This contradicts (\ref{HZ00}), and then (\ref{HZ5}) holds true.
Taking into account (\ref{HZ1}) and $v_{n}\leq w_{n}$ we can see that
\begin{align*}
0\leq v_{n}(x)\leq w_{n}(x)\leq \frac{\tilde{C}}{1+|x|^{3+2s}} \mbox{ for all } n\in \mathbb{N}, x\in \R^{3},
\end{align*}
for some constant $\tilde{C}>0$. 
Therefore, recalling the definition of $v_{n}$, we can deduce that
\begin{align*}
|\hat{u}_{n}|(x)&=|u_{n}|\left(\frac{x}{\e_{n}}\right)=v_{n}\left(\frac{x}{\e_{n}}-\tilde{y}_{n}\right) \\
&\leq \frac{\tilde{C}}{1+|\frac{x}{\e_{n}}-\tilde{y}_{n}|^{3+2s}} \\
&=\frac{\tilde{C} \e_{n}^{3+2s}}{\e_{n}^{3+2s}+|x- \e_{n} \tilde{y}_{n}|^{3+2s}} \\
&\leq \frac{\tilde{C} \e_{n}^{3+2s}}{\e_{n}^{3+2s}+|x-\eta_{\e_{n}}|^{3+2s}}.
\end{align*}
\end{proof}
This ends the proof of Theorem \ref{thm1}.

\noindent
{\bf Acknowledgments.}
The author would like to express his sincere gratitude to the referee for all insightful comments and valuable suggestions, which enabled to improve this version of the manuscript.


\begin{thebibliography}{777}


\bibitem{ACM}
C. O. Alves, F. J. S. A. Corr\^{e}a and T. F.  Ma, 
{\it Positive solutions for a quasilinear elliptic equation of Kirchhoff type}, 
Comput. Math. Appl. {\bf 49} (2005), no. 1, 85--93.


\bibitem{AFF}	
C.O. Alves, G.M. Figueiredo and M.F. Furtado, 
{\it Multiple solutions for a nonlinear Schr\"odinger equation with magnetic fields},
Comm. Partial Differential Equations {\bf 36} (2011), 1565--1586.


\bibitem{AM}
C.O. Alves and O.H. Miyagaki,
{\it Existence and concentration of solution for a class of fractional elliptic equation in $\R^{N}$ via penalization method},
Calc. Var. Partial Differential Equations {\bf 55} (2016), art. 47, 19 pp.



\bibitem{AR}
A. Ambrosetti and P.H. Rabinowitz,
{\it Dual variational methods in critical point theory and applications}, 
J. Funct. Anal. {\bf 14} (1973), 349--381.


\bibitem{A1}
V. Ambrosio, 
{\it Multiplicity of positive solutions for a class of fractional Schr\"odinger equations via penalization method}, 
Ann. Mat. Pura Appl. (4) {\bf 196} (2017), no. 6, 2043--2062.



\bibitem{Amjm}
V. Ambrosio, 
{\it Boundedness and decay of solutions for some fractional magnetic Schr\"odinger equations in $\R^{N}$},
Milan J. Math. {\bf 86} (2018), no. 2, 125--136.
 
\bibitem{Acpde}
V. Ambrosio, 
{\it Existence and concentration results for some fractional Schr\"odinger equations in $\R^{N}$ with magnetic fields},
Comm. Partial Differential Equations (in press) https://doi.org/10.1080/03605302.2019.1581800. 

\bibitem{AD}
V. Ambrosio and P. d'Avenia, 
{\it Nonlinear fractional magnetic Schr\"odinger equation: existence and multiplicity}, 
J. Differential Equations {\bf 264} (2018), no. 5, 3336--3368.

\bibitem{AI1}
V. Ambrosio and T. Isernia, 
\emph{A multiplicity result for a fractional Kirchhoff equation in $\mathbb{R}^{N}$ with a general nonlinearity}, 
Commun. Contemp. Math. {\bf 20} (2018), no. 5, 1750054, 17 pp.. 


\bibitem{AI2}
V. Ambrosio and T. Isernia, 
\emph{Concentration phenomena for a fractional Schr\"odinger-Kirchhoff  type problem}, 
Math. Methods Appl. Sci. {\bf 41} (2018), no. 2, 615--645. 
 



\bibitem{BC}
V. Benci and G. Cerami, 
{\it Multiple positive solutions of some elliptic problems via the Morse theory and the domain topology}, 
Calc. Var. Partial Differential Equations {\bf 2} (1994), 29--48.


\bibitem{B}
S. Bernstein,
{\it Sur une classe d'\'{e}quations fonctionnelles aux d\'{e}riv\'{e}es partielles},
Bull. Acad. Sci. URSS. S\'{e}r. Math. [Izvestia Akad. Nauk SSSR] {\bf 4} (1940), 17--26.

\bibitem{ZSZ}
Z. Binlin, M. Squassina and X. Zhang, 
{\it Fractional NLS equations with magnetic field, critical frequency and critical growth},
Manuscripta Math. {\bf 155} (2018), no. 1-2, 115--140.

\bibitem{BL}
H. Br\'ezis and E. H. Lieb, 
{\it A relation between pointwise convergence of functions and convergence of functionals}, 
Proc. Amer. Math. Soc. {\bf 88} (1983), no. 3, 486--490.



\bibitem{Cingolani}
S. Cingolani,
{\it Semiclassical stationary states of nonlinear Schr\"odinger equations with an external magnetic field} 
J. Differential Equations {\bf 188} (2003), 52--79.



\bibitem{CL}
S. Cingolani and M. Lazzo, 
{\it Multiple semiclassical standing waves for a class of nonlinear Schr\"odinger equations},
 Topol. Methods Nonlinear Anal. {\bf 10} (1997), 1--13.


\bibitem{CS}
S. Cingolani and S. Secchi,
{\it Semiclassical states for NLS equations with magnetic potentials having polynomial growths},
J. Math. Phys. {\bf 46}, 19 (2005).


\bibitem{CT}
A. Cotsiolis and N. K. Tavoularis, 
{\it Best constants for Sobolev inequalities for higher order fractional derivatives}, 
J. Math. Anal. Appl. {\bf 295} (2004), no. 1, 225--236.




\bibitem{DDPW}
J. D\'avila, M. del Pino and J. Wei, 
{\it Concentrating standing waves for the fractional nonlinear Schr\"odinger equation}, 
J. Differential Equations {\bf 256} (2014), no. 2, 858--892.

\bibitem{DS}
P. d'Avenia and M. Squassina,
{\it Ground states for fractional magnetic operators},
ESAIM Control Optim. Calc. Var. {\bf 24} (2018), no. 1, 1--24.


\bibitem{DPV}
E. Di Nezza, G. Palatucci and E. Valdinoci, 
{\it Hitchhiker's guide to the fractional Sobolev spaces}, 
Bull. Sci. math. {\bf 136} (2012), 521--573.


\bibitem{DPMV}
S. Dipierro, M. Medina and E. Valdinoci, 
{\it Fractional elliptic problems with critical growth in the whole of $\R^n$}, Appunti. Scuola Normale Superiore di Pisa (Nuova Serie) [Lecture Notes. Scuola Normale Superiore di Pisa (New Series)], 15. Edizioni della Normale, Pisa, 2017. viii+152 pp.


\bibitem{EL}
M. Esteban and P.L. Lions,
{\it Stationary solutions of nonlinear Schr\"odinger equations with an external magnetic field}, 
Partial differential equations and the calculus of variations, Vol. I, 401--449, Progr. Nonlinear Differential Equations Appl., 1, Birkh\"auser Boston, Boston, MA, 1989.


\bibitem{FQT}
P. Felmer, A. Quaas and J.Tan,
{\it Positive solutions of the nonlinear {S}chr{\"o}dinger equation with the fractional Laplacian},
Proc. Roy. Soc. Edinburgh Sect. A {\bf 142} (2012), 1237--1262.


\bibitem{FJ}
G.M. Figueiredo and J.R. Santos,
{\it Multiplicity and concentration behavior of positive solutions for a Schr\"odinger-Kirchhoff type problem via penalization method},
ESAIM Control Optim. Calc. Var. {\bf 20} (2014), no. 2, 389--415.



\bibitem{FPV}
A. Fiscella, A. Pinamonti and E. Vecchi,
{\it Multiplicity results for magnetic fractional problems},
J. Differential Equations {\bf 263} (2017), 4617--4633. 

\bibitem{FP}
A. Fiscella and P. Pucci, 
{\it $p$-fractional Kirchhoff equations involving critical nonlinearities},
 Nonlinear Anal. Real World Appl. {\bf 35} (2017), 350--378.

\bibitem{FV}
A. Fiscella and E. Valdinoci,
\emph{A critical Kirchhoff type problem involving a nonlocal operator}, 
Nonlinear Anal. \textbf{94} (2014), 156--170.

\bibitem{FVe}
A. Fiscella and E. Vecchi,
{\it Bifurcation and multiplicity results for critical magnetic fractional problems},
Electron. J. Differential Equations (2018), Paper No. 153, 18 pp.




\bibitem{H1}
H. Hajaiej, 
{\it Schr\"odinger systems arising in nonlinear optics and quantum mechanics}, 
Part I. Math. Models Methods Appl. Sci. {\bf 22} (2012), no. 7, 1250010, 27 pp. 


\bibitem{H2}
H. Hajaiej, 
{\it On the optimality of the assumptions used to prove the existence and symmetry of minimizers of some fractional constrained variational problems}, 
Ann. Henri Poincar\'e {\bf 14} (2013), no. 5, 1425--1433.

\bibitem{HLP}
Y. He, G. Li and S. Peng,
{\it Concentrating bound states for Kirchhoff type problems in $\R^{3}$ involving critical Sobolev exponents}, 
Adv. Nonlinear Stud. {\bf 14} (2014), no. 2, 483--510.

\bibitem{HZ}
X. He and W. Zou,
{\it Existence and concentration behavior of positive solutions for a Kirchhoff equation in $\R^{3}$},
J. Differ. Equ. {\bf 252} (2012), 1813--1834. 



\bibitem{I10}
T. Ichinose,
{\it Magnetic relativistic Schr\"odinger operators and imaginary-time path integrals},
Mathematical physics, spectral theory and stochastic analysis, 247--297, Oper. Theory Adv. Appl. {\bf 232}, Birkh\"auser/Springer Basel AG, Basel, 2013.

\bibitem{Is}
T. Isernia, 
{\it Positive solution for nonhomogeneous sublinear fractional equations in $\R^N$}, 
Complex Var. Elliptic Equ. {\bf 63} (2018), no. 5, 689--714.

\bibitem{Kato}
T. Kato,
{\it Schr\"odinger operators with singular potentials}, 
Proceedings of the International Symposium on Partial Differential Equations and the Geometry of Normed Linear Spaces (Jerusalem, 1972), Israel J. Math. {\bf 13}, 135--148 (1973).

\bibitem{Kir}
G. Kirchhoff,
\emph{Mechanik},
Teubner, Leipzig, 1883.

\bibitem{K}
K. Kurata,
{\it Existence and semi-classical limit of the least energy solution to a nonlinear Schr\"odinger equation with electromagnetic fields}, 
Nonlinear Anal. {\bf 41} (2000), 763--778.


\bibitem{Laskin}
N. Laskin, 
{\it Fractional quantum mechanics and L\'evy path integrals}, Phys. Lett. A {\bf 268} (2000), no. 4-6, 298--305.


\bibitem{LRZ}
S. Liang, D. Repov\u{s} and B. Zhang, 
{\it On the fractional Schr\"odinger-Kirchhoff equations with electromagnetic fields and critical nonlinearity} 
Comput. Math. Appl. {\bf 75} (2018), no. 5, 1778--1794.



 
\bibitem{LS} 
E. H. Lieb and R. Seiringer, 
{\it The stability of matter in quantum mechanics}, Cambridge University Press, Cambridge, 2010. xvi+293 pp.  

\bibitem{LionsK}
J.L. Lions, 
\emph{On some questions in boundary value problems of mathematical physics}, 
Contemporary developments in continuum mechanics and partial differential equations (Proc. Internat. Sympos., Inst. Mat., Univ. Fed. Rio de Janeiro, Rio de Janeiro, 1977), pp. 284--346, North-Holland Math. Stud., 30, North-Holland, Amsterdam-New York, 1978.  
 
 
\bibitem{Lions} 
P. -L. Lions, 
{\it The concentration-compactness principle in the calculus of variations. The locally compact case I}, 
Ann. Inst. H. Poincar\'e Anal. Non Lin\'eaire {\bf 1} (1984), no. 2, 109--145. 



\bibitem{LSZ}
Z. Liu, M. Squassina and J. Zhang,
{\it Ground states for fractional Kirchhoff equations with critical nonlinearity in low dimension},
 NoDEA Nonlinear Differential Equations Appl. {\bf 24} (2017), no. 4, Art. 50, 32 pp. 

\bibitem{MPSZ}
X. Mingqi, P. Pucci, M. Squassina and B. Zhang, 
{\it Nonlocal Schr\"odinger-Kirchhoff equations with external magnetic field}, 
Discrete Contin. Dyn. Syst. {\bf 37} (2017), no. 3, 1631--1649.


\bibitem{MBRS}
G. Molica Bisci, V. R\u{a}dulescu and R. Servadei,
{\it Variational Methods for Nonlocal Fractional Problems},
{\em Cambridge University Press}, \textbf{162} Cambridge, 2016.

\bibitem{Moser}
J. Moser,
{\it A new proof of De Giorgi's theorem concerning the regularity problem for elliptic differential equations},
Comm. Pure Appl. Math. {\bf 13} (1960), 457--468.


\bibitem{PP}
G. Palatucci and A. Pisante,
{\it Improved Sobolev embeddings, profile decomposition, and concentration-compactness for fractional Sobolev spaces}, 
Calc. Var. Partial Differential Equations {\bf 50} (2014), 799--829.




\bibitem{P}
S.I. Poho\v{z}aev, 
\emph{A certain class of quasilinear hyperbolic equations}, 
Mat. Sb. \textbf{96} (1975), 152--166. 



\bibitem{Rab}
P. H. Rabinowitz,
{\it On a class of nonlinear Schr\"odinger equations}
Z. Angew. Math. Phys. {\bf 43} (1992), 270--291.

\bibitem{RS}
M. Reed and B. Simon, 
{\it Methods of Modern Mathematical Physics, II, Fourier analysis, self-adjointness}, 
Academic Press, New-York-London, 1975. xv+361 pp. 


\bibitem{SV2}
R. Servadei and E. Valdinoci, 
{\it The Brezis-Nirenberg result for the fractional Laplacian}, 
Trans. Amer. Math. Soc. {\bf 367} (2015), no. 1, 67--102.



\bibitem{S}
L. Silvestre,
{\it Regularity of the obstacle problem for a fractional power of the Laplace operator},
Comm. Pure Appl. Math., {\bf 60} (2007), no. 1, 67--112.



\bibitem{WTXZ}
J. Wang, L. Tian, J. Xu and F. Zhang, 
{\it Multiplicity and concentration of positive solutions for a Kirchhoff type problem with critical growth}, 
J. Differential Equations {\bf 253} (2012), no. 7, 2314--2351.


\bibitem{W}
M. Willem,
{\it Minimax theorems},
Progress in Nonlinear Differential Equations and their Applications 24, Birkh\"auser Boston, Inc., Boston, MA, 1996.





\end{thebibliography}
\end{document}